\newcommand{\Z}{\mathbb{Z}}
\newcommand{\N}{\mathbb{N}}
\newcommand{\R}{\mathbb{R}}
\newcommand{\tot}{\mathrm{tot}}
\newcommand{\dgm}{\mathrm{dgm}}
\newcommand{\rk}{\mathrm{rk}}
\newcommand{\rank}{\mathrm{rank}}
\newcommand{\Int}{\mathrm{Int}}
\newcommand{\fpint}{\mathrm{int}}
\newcommand{\fpInt}{\fpint}
\newcommand{\res}{\chi^{\Int}}
\newcommand{\seg}{\mathrm{seg}}
\newcommand{\Seg}{\mathrm{Seg}}
\newcommand{\bfield}{k}
\newcommand{\Vect}{\mathrm{Vect}_{\bfield}}
\DeclareMathOperator{\Rep}{Rep}
\newcommand{\Repk}{\Rep}
\DeclareMathOperator{\fprep}{{fp\text{-}rep}}
\newcommand{\fprepk}{\fprep}
\DeclareMathOperator{\pfdrep}{{rep}}
\newcommand{\pfdrepk}{\pfdrep}
\DeclareMathOperator{\pfdfdrep}{{fds\text{-}rep}}
\newcommand{\pfdfdrepk}{\pfdfdrep}
\theoremstyle{remark}
\theoremstyle{definition}
\newtheorem{theorem}{Theorem}[section]
\newtheorem{convention}[theorem]{Convention}
\newtheorem{proposition}[theorem]{Proposition}
\newtheorem{corollary}[theorem]{Corollary}
\newtheorem{remark}[theorem]{Remark}
\newtheorem{definition}[theorem]{Definition}
\newtheorem{lemma}[theorem]{Lemma}
\newtheorem*{question}{Question}
\newtheorem{example}[theorem]{Example}
\newcommand{\coker}{\mathrm{coker}}
\newcommand{\dimv}{\mathrm{\underline{dim}}}
\newcommand{\dimhom}{\mathrm{dimh}}
\DeclareMathOperator{\mmod}{{mod}}
\DeclareMathOperator{\Hom}{{Hom}}
\DeclareMathOperator{\ksp}{\mathit{K}_0^{sp}}
\DeclareMathOperator{\ind}{ind}
\DeclareMathOperator{\add}{add}
\DeclareMathOperator{\convf}{Conv}
\DeclareMathOperator{\ima}{im}
\newcommand{\cC}{\mathcal{C}}
\newcommand{\cD}{\mathcal{D}}
\newcommand{\smat}[1]{ \left[\begin{smallmatrix} #1 \end{smallmatrix}\right] }
\definecolor{darkblue}{rgb}{0.0, 0.0, 0.8}
\definecolor{darkred}{rgb}{0.8, 0.0, 0.0}
\definecolor{darkgreen}{rgb}{0.0, 0.65, 0.0}
\newcommand{\replace}[2]
{#2}
\newcommand{\repl}[2]
{#2}
\title{Barcoding Invariants and Their Comparison
}
\author[1]{Emerson G. Escolar}
\author[2]{Woojin Kim}
\affil[1]{Graduate School of Human Development and Environment,
  Kobe University,  
    {3-11 Tsurukabuto, Nada-ku},
    {Kobe},
    {657-8501},
    {Hyogo},
    {Japan}\thanks{\texttt{e.g.escolar@people.kobe-u.ac.jp} 
}}
\affil[2]{Department of Mathematical Sciences, KAIST, 
Daejeon, South Korea\thanks{\texttt{woojin.kim@kaist.ac.kr} 
}}
\begin{document}

\maketitle

\begin{abstract}
 The persistence barcode, which can be obtained from the interval decomposition of a persistence module, plays a pivotal role in applications of persistent homology.
 For \emph{multi-}parameter persistent homology, which lacks a complete discrete invariant, and where persistence modules are no longer always interval decomposable, many alternative invariants have been proposed.
 Many of these invariants are akin to persistence barcodes, in that they assign (signed) multisets of intervals.
 Furthermore, to any interval decomposable module, those invariants assign the multiset of intervals that correspond to its summands.
 Naturally, identifying the relationships among invariants of this type, or ordering them by their discriminating power, is a fundamental question.
 To address this,  we formalize the notion of \emph{barcoding invariants} and compare them by comparing their kernels, which are taken as a measure of their (in-)discriminating power.
 We show that any two different barcoding invariants $f$ and $g$ with the same basis are incomparable; i.e.\ one cannot be strictly finer than the other.
 Furthermore, we identify what we call a \emph{transfer isomorphism} between the kernels of $f$ and $g$, implying that, given any pair of persistence modules that are not distinguishable via $f$ but are via $g$, one can generate another pair of persistence modules that are so via $f$, but not via $g$.
 One implication of the existence of the transfer isomorphism is that introducing a new barcoding invariant does not add any value in terms of its generic discriminating power, even if it is distinct from the existing barcoding invariants.
 Another implication is a novel characterization of the generalized persistence diagram without involving M\"obius inversion.
 Along the way, we generalize several recent results on the discriminative power of invariants for poset representations within our unified framework.
\end{abstract}

\paragraph{Keywords.}
poset representations, multi-parameter persistence, persistence barcodes,
persistence diagrams, relative homological algebra

\paragraph{MSC codes.}
16G20, 55N31

\section{Introduction}
\label{sec:intro}

Persistent homology is a central concept in Topological Data Analysis (TDA), with many applications to date \cite{cang2015topological,carlsson2021topological,chan2013topology,dabaghian2012topological,PhysRevC.106.064912,lee2018high,stolz2023relational,hiraoka2016hierarchical}. One of the most common methods for obtaining persistent homology starts with a dataset $X$, a finite set of points in Euclidean space, and then we construct a nested sequence of simplicial complexes on $X$ that captures the structure of $X$. Applying the homology functor with field coefficients to this nested family gives us persistent homology, or more specifically, a \emph{persistence module}, i.e. a representation of a totally ordered poset.
In general, a \emph{representation of a poset} is a
functor from the poset to the category of vector spaces and linear maps over a field.

Multi-parameter persistent homology or the homology of filtered topological spaces over posets, extends the concept of persistent homology \cite{botnan2022introduction,bubenik2015metrics, carlsson2009theory}. This generalization arises when considering multiple aspects or features of the dataset $X$.
Examples include aspects (properties) such as the density of points in $X$ (in order to differentiate the role of outliers in $X$ from that of the other points in $X$ when constructing a simplicial filtration), time (when $X$ is time-varying \cite{kim2021spatiotemporal}), or domain-specific features—such as ionization energy (when $X$ stands for an atomic configuration \cite{demir2023se}). From such considerations, we obtain a representation of a poset, often a product of totally ordered sets. A representation of a product of totally ordered sets is often referred to as a \emph{multi-parameter persistence module} in TDA.

Whereas persistence modules admit \emph{persistence barcodes} as their complete discrete invariants, multi-parameter persistence modules do not admit such an invariant \cite{carlsson2009theory}.
From the perspective of representation theory, representations of a poset $P$ is of \emph{wild type} unless $P$ is one of the exceptional posets \cite{aoki2023summand,bauer2020cotorsion,derksen2005quiver}. This motivates researchers in the TDA community to investigate a proxy for persistence barcodes for representations of a general poset $P$, which are not necessarily complete, but potentially useful in practical applications of TDA.

In this effort, numerous invariants have been proposed for multi-parameter persistence modules or general poset representations. Examples include 
the rank invariant \cite{carlsson2009theory}, 
the fibered barcode \cite{lesnick2015interactive},
the Hilbert functions (a.k.a. dimension vectors) and graded Betti numbers \cite{lesnick2015interactive,oudot2024stability},
the generalized rank invariant and its M\"obius inversion \cite{kinser2008rank,kim2021generalized,kim2023persistence},
the elder-rule-staircode \cite{cai2021elder},
the zigzag-indexed-barcode \cite{clause2022discriminating},
the meta-diagram \cite{clause2023meta},
the birth-death function and its M\"obius inversion 
\cite{mccleary2022edit},
the multirank invariant \cite{thomas2019invariants},
compressed multiplicities and interval replacement \cite{asashiba2019approximation,asashiba2024interval},
connected persistence diagrams \cite{hiraoka2023refinement},
invariants using relative resolutions \cite{amiot2024invariants,botnan2021signed,blanchette2024homological,blanchette_exact_2023,asashiba2023approximation,aoki2023summand, chacholski2024koszul}, the fringe presentations \cite{miller2020homological,lenzen2024computing}, 
the graphcode\footnote{To be precise, the graphcode is not an invariant of a multi-parameter persistence module, but a feature that can be extracted from a simplicial bifiltration.} \cite{russoldgraphcode}, 
the Grassmannian persistence diagrams \cite{gulen2023orthogonal},
and
the skyscraper invariant which is based on the Harder-Narasimhan types of quiver representations \cite{fersztand2023harder, fersztand2024harder}.

Several of these invariants are akin to persistence barcodes,
in that they assign (possibly signed) multisets of intervals.
Furthermore, those invariants assign any interval decomposable module to its corresponding multiset of intervals.
Naturally, identifying the relationships among those of invariants or ordering them by their discriminating power is a fundamental question. We take a systematic approach to address this question.
We summarize our contributions in Items \ref{item:contribution-barcoding-invariant}-\ref{item:contribution-generalizations} below.

\paragraph{Contributions}
Let $P$ be a poset.
\begin{enumerate}
\item
  Building on the concept of \emph{invariant} from \cite{blanchette2024homological},
  we formalize the notion of \emph{barcoding invariants}.
  Broadly speaking, given a fixed set of indecomposable representations of $P$,
  a barcoding invariant $f$ is defined as a $\Z$-linear map 
  sending each representation of $P$
  to a $\mathbb{Z}$-linear combination of representations in the fixed set,
  so that map is the identity on this fixed set.   
  We call the fixed set the \emph{basis} for $f$;
  see Definitions \ref{definition:completefixes} and \ref{definition:barcodelike}.
  In fact, the notion of \emph{basis} has been defined for invariants in general
  (see \cite[Definition~4.11]{amiot2024invariants} and also our Remark~\ref{rem:basis}).
  The central property for ``barcoding invariants'' is that
  they are the identity on the basis.
  \label{item:contribution-barcoding-invariant}
\end{enumerate}
We remark that, in the literature, the basis is often taken to be the set of \emph{interval representations} of $P$ (cf. Definition \ref{def:interval_representation}) or its subsets \cite{asashiba2019approximation,asashiba2023approximation,asashiba2024interval,botnan2021signed,chacholski2024koszul,clause2022discriminating,dey2024computing}, mainly motivated by the goal of devising proxies for persistence barcodes in the setting of multi-parameter persistent homology.

To compare barcoding invariants, we extend the comparison framework from \cite{blanchette2024homological,blanchette_exact_2023},  where, given any two additive invariants $f$ and $g$, the relation $\ker f \subseteq \ker g$ indicates that $f$ is \emph{finer} than $g$. If neither $\ker f \subseteq \ker g$ nor $\ker g \subseteq \ker f$, we say that $f$ and $g$ are \emph{incomparable}. We say that $f$ and $g$ have \emph{equal} discriminating power if $\ker f=\ker g$.
In this case, $f$ and $g$ determine one another  (cf. Lemma \ref{lemma:invariantrelations}) and thus we also say that $f$ and $g$ are \emph{equivalent} (cf. Definition \ref{def:comparison_of_invariants}).

\begin{enumerate}[resume]
\item We show that any pair of barcoding invariants $f$ and $g$ having the same basis
  are either equivalent or incomparable; see Theorem~\ref{theorem:comparing}.
  \label{item:contribution-same-or-incomparable}
\item
We take the previous item further by 
identifying what we call a \emph{transfer isomorphism} between their kernels.    
What makes the transfer isomorphism special is that, given any pair of persistence modules that are not distinguishable via $f$ but are via $g$, one can immediately generate another pair of persistence modules that are so via $f$, but not via $g$.
See Lemma \ref{lemma:invariant_kernel}~\ref{lemma:invariantrelations:num:kerinterpret}, Theorem \ref{theorem:isokernel}, and Section~\ref{sec:applications}.\label{item:contribution-isomorphic-kernels}

One implication of Theorem~\ref{theorem:isokernel}   
is that introducing a new barcoding invariant $g$ does not add any value because, for any existing barcoding invariant $f$, there exists a bijection between $\ker f\setminus \ker g$ and $\ker g\setminus \ker f$.

\item We apply the abstract results from Items \ref{item:contribution-same-or-incomparable}-\ref{item:contribution-isomorphic-kernels} to particular invariants proposed for multi-parameter persistence modules. For example, we show that the \emph{generalized persistence diagram} (cf. Example \ref{example:gpd}) and the collection of signed intervals that naturally arises from the \emph{interval resolution} (cf. Example \ref{ex:interval_euler}) are incomparable, 
  strengthening findings from \cite{blanchette2024homological}; see Theorem \ref{thm:classifying_interval-barcoding_invariants}. Also, we identify a universal property of the generalized persistence diagram that does not involve M\"obius inversion; see Theorem \ref{thm:uniqueness}. \label{item:contribution-applications}
 
\item Our framework allows us to easily generalize results on 
  some properties shown to hold for  
  two recently introduced invariants for poset representations. This demonstrates that those results stem from the general properties of barcoding invariants, rather than their specialized constructions; see Proposition \ref{proposition:applyequivalent} and Theorem \ref{theorem:nobigger}.\label{item:contribution-generalizations}
\end{enumerate}

\paragraph{Organization}
Section \ref{sec:background} provides the preliminaries for our results. Section \ref{sec:resultscomparison} covers the content of Contributions \ref{item:contribution-barcoding-invariant}, \ref{item:contribution-same-or-incomparable}, \ref{item:contribution-isomorphic-kernels}, and \ref{item:contribution-generalizations}. Section \ref{sec:applications} discusses the content of Contribution \ref{item:contribution-applications}. Finally, Section \ref{sec:discussion} concludes the paper.


\section{Background}
\label{sec:background}

In Section \ref{sec:poset_representations}, we review basic concepts related to representations of posets. In Section \ref{sec:Krull-Schmidt}, we review the notion of Krull-Schmidt category in the context of our study. In Section \ref{sec:additive_invariants_and_their_comparisons}, we review the notion of additive invariants and their comparison framework, as introduced in the literature.  In Section \ref{subsec:invariants_examples}, we discuss several simple additive invariants. In Section \ref{subsec:Mobius_inversion}, we review the concept of M\"obius inversion and relevant additive invariants. In Section \ref{subsec:relhom_invariants}, we review basic concepts in (relative) homological algebra, and relevant additive invariants.

\subsection{Poset representations}
\label{sec:poset_representations}

Throughout this paper, $P$ denotes a poset, and $k$ denotes a fixed field. We identify $P$ with the category whose objects are the elements of $P$, and for any $x, y \in P$, the set of morphisms from $x$ to $y$ consists of the unique element $x \to y$ if $x \leq y$, and is the empty set otherwise.

A \emph{$\bfield$-representation of $P$}
(or simply \emph{representation}, when the specification of $k$ and $P$ is clear)
is defined as a functor from $P$
to the category of $\bfield$-vector spaces $\Vect$.
By $\Repk P$, we denote the category of $\bfield$-representations (i.e., the functor category), which is an additive $\bfield$-category.

The \emph{direct sum} of any two $M,N\in \Repk P$
is defined pointwisely, i.e. 
\begin{align*}
  (M\oplus N)(x)&:=M(x)\oplus N(x)\ & \text{for all } x\in P, \\
  (M\oplus N)(x\rightarrow y)&:= M(x\rightarrow y)\oplus N(x\rightarrow y)& \text{for all } x\rightarrow y\in P.
\end{align*}
 A representation $M$ is said to be
 \begin{enumerate}[label=(\roman*)]
 \item \emph{trivial} or \emph{zero} if $M(x)=0$ for all $x\in P$ (in this case, we write $M=0$), and
 \item \emph{decomposable} if $M$ is isomorphic to a direct sum of two nontrivial representations. 
 \end{enumerate}

The structurally simplest indecomposable representations are perhaps the `interval representations' that we are about to define. 
A subposet $I$ of $P$ is said to be \emph{connected} if $I$ is nonempty,
and for any pair of points $x,y\in I$,
there exists a sequence of points $x=:x_1,x_2,\ldots,x_n:=y$ in $I$ such that
either $x_i\leq x_{i+1}$ or  $x_{i+1}\leq x_{i}$ for each $i=1,\ldots,n-1$.
A full subposet $I$ of $P$ is said to be \emph{convex} if
for any elements $x \leq y \leq z$ of $P$ with $x,z \in I$, it holds that $y \in I$.
If a full subposet $I$ of $P$ is both connected and convex,
then $I$ is said to be an \emph{interval}.\footnote{Intervals have also been called \emph{spreads} \cite{blanchette2024homological}.}
Let $\Int(P)$ denote the set of all intervals in a poset $P$.
For any $x,z\in P$ with $x\leq z$, the \emph{segment} from $x$ to $z$ is
the full subposet 
$
  [x,z]:=\{y\in P:x \leq y\leq z\}.
$
Let $\Seg (P)$ denote the set of all segments in $P$.
It is clear that $\Seg(P) \subseteq \Int(P)$.

\begin{definition}
\label{def:interval_representation}
  For any $I\in \Int(P)$,
  the \emph{interval representation with support $I$} is $\bfield_I\in \Repk P$ defined
  as
  \[
    \bfield_I(x) =
    \begin{cases}
      \bfield & \text{if } x \in I \\
      0 & \text{otherwise}
    \end{cases}
    \text{ and }
    \bfield_I(x\rightarrow y) =
    \begin{cases}
      1_\bfield :\bfield \rightarrow \bfield & \text{if } x,y \in I \\
      0 & \text{otherwise}
    \end{cases}
  \]
  for any object $x$ of $P$ and any morphism $x\rightarrow y$ of $P$ (i.e.\ $x\leq y$), respectively.   
\end{definition}

\begin{proposition}[{\cite[Proposition 2.2]{botnan2018algebraic}}]
  \label{prop:interval_representation_is_indecomposable}   
  For any $I\in \Int(P)$,
  the interval representation $k_I$ of $P$ is indecomposable. 
\end{proposition}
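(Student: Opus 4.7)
The plan is to prove indecomposability by computing the endomorphism ring $\mathrm{End}(k_I)$ and showing it is isomorphic to $k$ as a $k$-algebra. Since a field has only the trivial idempotents $0$ and $1$, this forces any direct sum decomposition $k_I = M \oplus N$ to be trivial, which is precisely what indecomposability asserts.

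To set up the calculation, I would first unpack what a natural transformation $\phi \colon k_I \to k_I$ looks like. Its component $\phi_x$ at $x \in P$ is a linear endomorphism of $k_I(x)$. If $x \notin I$, then $k_I(x) = 0$ and hence $\phi_x = 0$ is forced; if $x \in I$, then $k_I(x) = k$ is one-dimensional, so $\phi_x$ is multiplication by a unique scalar $\lambda_x \in k$. The naturality square for a morphism $x \to y$ in $P$ is automatically satisfied whenever $x \notin I$ or $y \notin I$, since at least one of the two vertical maps is zero; in the remaining case $x \leq y$ with $x, y \in I$, the horizontal structure maps are both $1_k$ by Definition~\ref{def:interval_representation}, and naturality therefore reduces to the equation $\lambda_y = \lambda_x$.

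Next I would invoke the connectedness of $I$. For any two points $x, y \in I$, there is a zigzag sequence $x = x_1, x_2, \ldots, x_n = y$ in $I$ with each consecutive pair comparable. Applying the previous step to each comparability yields $\lambda_{x_i} = \lambda_{x_{i+1}}$, and inducting along the zigzag gives $\lambda_x = \lambda_y$. Hence the entire family $(\lambda_x)_{x \in I}$ collapses to a single scalar $\lambda \in k$, and $\phi$ is multiplication by $\lambda$ on each nonzero component. This identifies $\mathrm{End}(k_I) \cong k$ as a $k$-algebra, from which indecomposability follows since the only idempotents in $k$ are $0$ and $1$, corresponding respectively to $\phi = 0$ (so $M = 0$) and $\phi = \mathrm{id}$ (so $N = 0$) in any decomposition.

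I do not anticipate any real obstacle: the two defining features of an interval play complementary roles and are both essential. Convexity ensures that the assignment $k_I$ is genuinely a functor (otherwise composability of structure maps can fail at a triple $x \leq y \leq z$ with $x, z \in I$ and $y \notin I$), and connectedness is precisely the ingredient that propagates the local scalar $\lambda_x$ across all of $I$. Beyond these, the argument is mechanical bookkeeping, and in particular it makes no finiteness assumption on $I$, so the result holds uniformly for arbitrary $I \in \mathrm{Int}(P)$.
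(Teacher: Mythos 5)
Your argument is correct and is the standard one: compute $\mathrm{End}(k_I)\cong k$, then conclude indecomposability from the absence of nontrivial idempotents. The paper itself does not reprove this; it simply cites \cite[Proposition~2.2]{botnan2018algebraic}, and your endomorphism-ring computation via connectedness of $I$ is exactly the argument given there, so you have reproduced the intended proof faithfully.
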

If a given $M\in \Repk P$ is isomorphic to a direct sum of interval representations, then $M$ is said to be \emph{interval decomposable.}

For any $x\in P$, let $x^\uparrow$
denote the full subposet of all points $y\in P$ such that $x\leq y$.
Then, $x^\uparrow\in \Int(P)$.
The interval representations $k_{x^\uparrow}$, $x\in P$
are precisely indecomposable projective objects in $\Repk P$
(see for example \cite[Section~3.7]{gabriel1992representations}).

A representation $M \in \Repk P$ is said to be \emph{finitely presentable}
if 
$M$ is isomorphic to the cokernel of a morphism
between finitely generated projective representations in $\Repk P$,
i.e. finite direct sums of representations of the form $k_{x^\uparrow}$. Also, $M$ is said to be \emph{pointwise finite dimensional} if $\dim_k M(x)$ is finite for all $x\in P$.
\begin{definition}
  \label{definition:cats}
  We consider the following additive subcategories of $\Repk P$.
  \begin{enumerate}[label=(\roman*)]
  \item By $\pfdrepk P$ we denote the full subcategory of all  pointwise finite dimensional $\bfield$-representations of $P$. Objects in this category are said to be pfd $k$-representations of $P$.
  \item By $\pfdfdrepk P$ we denote the full subcategory of all
    $M \in \pfdrepk P$ that decomposes into a \textbf{f}inite \textbf{d}irect \textbf{s}um of indecomposables.
  \item By $\fprepk P$, we denote the full subcategory of all \textbf{f}initely \textbf{p}resentable
    $\bfield$-representations of $P$.
  \end{enumerate}  
\end{definition}

We will recall later that $\fprepk P$ is a subcategory of $\pfdfdrepk P$
and thus we have
$ \fprepk P \subseteq \pfdfdrepk P \subseteq \pfdrepk P $
(cf. Proposition \ref{prop:catsprops}).
While we prove the main results of this paper 
in a general setting, they will be relevant to some of the above categories.

Let us once again consider the intervals $\Int(P)$.
It is clear that for $I \in \Int(P)$,
$k_I \in \pfdfdrepk P \subseteq \pfdrepk P$.
However, in general, the representation $k_I$ may not be finitely presentable.
For example, when for $P=\R^2$ and $I=\{(x,y)\in \R^2:y>-x\}$, the $k$-representation $k_I$ of $P$ is not finitely presentable.
Let 
\[
  \fpint(P) := \{I \in \Int(P) \mid k_I \text{ is finitely presentable} \}.
\]

Similarly, even for $I\in \Seg(P)$, $k_I$ is not necessarily finitely presentable. Let
\[
  \seg(P) := \{I \in \Seg(P) \mid k_I \text{ is finitely presentable} \}.
\]

\begin{remark}\label{rem:miscellaneous}
  \begin{enumerate}[label=(\roman*)]
\item   By abuse of notation, we simply 
  write $I$ for the interval representation $k_I$.
  Accordingly, we may consider $\Int(P)$ 
  as the set of interval representations $k_I$,
  or even the set of the isomorphism classes $[k_I]$.
  A similar convention applies to $\fpint(P)$ and $\seg(P)$.\label{item:miscellaneous-abuse-of-notation}

  \item The category $\fprepk P$ with $P = \R^d$ is of particular interest in practical applications of multi-parameter persistent homology; see, e.g., \cite{botnan2022introduction, carlsson2009theory}.\footnote{In the poset $\R^d$, we have  $(x_1,\ldots,x_d)\leq (y_1,\ldots,y_d)$ if and only if $x_i\leq y_i$ for each $i=1,\ldots,d$.}
  \item    When $P$ is a finite poset, the three subcategories given in Definition~\ref{definition:cats} are identical,    
      $\Int(P) = \fpint(P) \text{, and } \Seg(P) = \seg(P).$\label{item:miscellaneous-when-P-is-finite}
  \end{enumerate}  
\end{remark}

\subsection{Krull-Schmidt categories}\label{sec:Krull-Schmidt}

In this section, we review the notion of Krull-Schmidt category,
and identify properties of the categories given in Definition \ref{definition:cats}.
Refer to \cite{mac2013categories,borceux1994handbook1} for an overview of general category theory
and \cite{borceux1994handbook2,freyd1964abelian} for additive (and abelian) categories.

An additive category $\cD$ is said to be a \emph{Krull-Schmidt category}
if every object of $\cD$
decomposes as a finite direct sum of objects of $\cD$,
each with local endomorphism ring.\footnote{A ring $R$ is \emph{local} if $1_R\neq 0_R$, and for every $x\in R$, $x$ or $1-x$ is a unit.}
In a Krull-Schmidt category,
such decompositions are unique up to permutation of summands and isomorphisms
(see for example \cite[Theorem~4.2]{krause2015krull}).
A category is said to be \emph{essentially small}
if the isomorphism classes (\emph{isoclasses}) of its objects form a set.

We clarify the hierarchy of the categories given in Definition \ref{definition:cats} and address their properties.
In what follows, $P$ stands for a poset.
\begin{proposition}
  \label{prop:catsprops}
  \begin{enumerate}[label=(\roman*)]
  \item \label{item:inclusions}  The categories
    $\fprepk P$, $\pfdfdrepk P$, $\pfdrepk P$ are essentially small,
    with the following inclusions:
    \[
      \fprepk P \subseteq \pfdfdrepk P \subseteq \pfdrepk P.
    \]

  \item \label{prop:catsprops_pfd}    
    Any $M \in \pfdrepk P$ is a direct sum of indecomposable representations
    with local endomorphism rings.
  \item \label{prop:catsprops_pfdfd}
    $\pfdfdrepk P$ is a 
    Krull-Schmidt category.
  \item \label{prop:catsprops_fp}
    $\fprepk P$ is a 
    Krull-Schmidt category,
    where $\dim_k\Hom(M,N)$ is finite for any $M, N \in \fprepk P$.
  \end{enumerate}  
\end{proposition}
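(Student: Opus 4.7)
The plan is to handle the four parts in a logical order, establishing (i) partly on general grounds and completing it using (iv), deferring to a cited theorem for (ii), deducing (iii) from (ii), and giving the main substantive argument for (iv).

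For part (i), essential smallness of $\pfdrepk P$ is immediate: up to isomorphism, a pfd representation is determined by a choice of finite-dimensional $k$-vector space at each $x \in P$ together with transition maps along $\leq$, which is set-valued data. Essential smallness of the subcategories $\pfdfdrepk P$ and $\fprepk P$ follows. The inclusion $\pfdfdrepk P \subseteq \pfdrepk P$ is by definition. For $\fprepk P \subseteq \pfdfdrepk P$, I would first observe that any finitely presentable $M$, being a quotient of a finite direct sum of projectives $k_{x^\uparrow}$, satisfies $\dim_k M(y) < \infty$ at every $y$; indeed $M(y)$ is a quotient of $\bigoplus_i k_{x_i^\uparrow}(y)$, which has dimension equal to the (finite) number of $x_i \leq y$. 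The existence of a finite indecomposable decomposition is then a consequence of the Krull-Schmidt conclusion in (iv), so I would close this step after proving (iv).

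For part (ii), I would invoke the now standard theorem that every pfd representation of an arbitrary poset decomposes as a direct sum of indecomposables with local endomorphism rings; this is proved by Botnan--Crawley-Boevey (and variants due to Crawley-Boevey and others). Part (iii) then follows immediately: by definition an object $M \in \pfdfdrepk P$ is a \emph{finite} direct sum of indecomposables, and by (ii) each such summand has local endomorphism ring, which is exactly the Krull-Schmidt property.

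Part (iv) is the main substantive step. The plan is to show first that $\dim_k \Hom(M,N) < \infty$ for any $M,N \in \fprepk P$. Taking a finite projective presentation $\bigoplus_i k_{x_i^\uparrow} \to \bigoplus_j k_{y_j^\uparrow} \to M \to 0$ and applying $\Hom(-,N)$ yields a left exact sequence, so it suffices to show $\Hom\bigl(\bigoplus_j k_{y_j^\uparrow}, N\bigr)$ is finite-dimensional. By the Yoneda lemma, $\Hom(k_{y^\uparrow}, N) \cong N(y)$, which is finite-dimensional because $N$ is pfd (as just established). Hence $\End(M)$ is a finite-dimensional $k$-algebra, in particular semiperfect, so it admits a complete set of primitive orthogonal idempotents $e_1, \ldots, e_n$ whose corner rings $e_i \End(M) e_i$ are local. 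Splitting these idempotents in the idempotent-complete category $\Repk P$ produces a finite direct sum decomposition $M \cong \bigoplus_i e_i M$ in which each summand has local endomorphism ring, establishing Krull-Schmidt and simultaneously finishing the inclusion $\fprepk P \subseteq \pfdfdrepk P$ from (i).

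The main obstacle is really part (ii), whose proof is genuinely non-trivial and would not be reproved here; the remaining work is the Yoneda/finite-presentation bookkeeping in (iv) together with the standard passage from finite-dimensional endomorphism algebras to Krull-Schmidt decompositions via primitive idempotents.
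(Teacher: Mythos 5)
Your argument is correct, and it matches the paper's proof for parts (i), (ii), and (iii): the paper also establishes essential smallness of $\pfdrepk P$ by counting choices of bases, invokes Botnan--Crawley-Boevey for (ii), reads (iii) off the definition together with (ii), and deduces the inclusion $\fprepk P \subseteq \pfdfdrepk P$ from (iv) plus the observation that a cokernel of a map between finite sums of projectives $k_{x^\uparrow}$ is pfd. The genuine difference is in (iv): the paper simply cites Gabriel--Roiter's Corollary~8.4 (for any spectroid $\mathcal{A}$, $\mmod\mathcal{A}$ is an aggregate), after an identification $\fprepk P \cong \mmod k P^{\mathrm{op}}$, whereas you give a direct, self-contained proof in this special case --- bounding $\dim_k\Hom(M,N)$ via a finite projective presentation and the $k$-linear Yoneda isomorphism $\Hom(k_{y^\uparrow},N)\cong N(y)$, and then deducing Krull--Schmidt from finite-dimensionality of $\End(M)$ via semiperfectness and idempotent splitting. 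What you gain is that the reader sees exactly where finite presentability and the pfd property interact, rather than chasing terminology through Gabriel--Roiter; what you give up is conciseness. One small step you elide: after splitting the idempotents $e_i$ in $\Repk P$, you should note that each summand $e_i M$ is again in $\fprepk P$ (a retract of a finitely presented object is finitely presented, since $\Repk P$ is locally finitely presented with the $k_{x^\uparrow}$ as finitely presented generators); without this the Krull--Schmidt decomposition a priori only lives in the ambient category $\Repk P$.
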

\begin{proof}
  Item \ref{prop:catsprops_pfd}
  is identical to {\cite[Theorem~1.1]{botnan2020decomposition}}.
  Item \ref{prop:catsprops_pfdfd} follows from
  definition (we restrict to those objects such that the Krull-Schmidt property holds).
  Item \ref{prop:catsprops_fp} is a 
  corollary of \cite[Corollary~8.4]{gabriel1992representations}\footnote{
    Some translation is needed to adapt to the terminology of
    \cite{gabriel1992representations}, which we explain as follows.
    First, we note that    
    $\fprepk P$ is isomorphic to the category $\mmod \bfield P^{\mathrm{op}}$
    of finite-presented right modules over the $\bfield$-linearization
    of the opposite category of $P$.
    Then, for any poset, $\mathcal{A} := \bfield P^{\mathrm{op}}$ is a
    \emph{spectroid} (i.e.\ $\mathcal{A}$ is an essentially small $k$-category, with
    all endomorphism algebras local, and with distinct objects non-isomorphic, and
    $\mathcal{A}(x,y)$ is finite-dimensional for all $x,y \in \mathcal{A}$).
    Corollary~8.4 of \cite{gabriel1992representations} states that
    for $\mathcal{A}$ a spectroid, $\mmod \mathcal{A}$ is an \emph{aggregate}
    (i.e.\ an essentially small additive $k$-category, with
    each object a finite sum of objects with local endomorphism algebras,
    and $(\mmod \mathcal{A})(x,y)$ is finite-dimensional for all $x,y \in \mmod \mathcal{A}$).
  }.

  Now, we show Item \ref{item:inclusions}. We first show that $\pfdrepk P$ is essentially small, that is,
  the collection of isoclasses of representations in $\pfdrepk P$ is a set.
  For each $\{d_x\}_{x \in P} \in \prod_{x \in P} \mathbb{Z}_{\geq 0}$,
  consider the isoclasses of $M \in \pfdrepk P$
  satisfying $\dim M(x) = d_x$ for all $x\in P$. By choices of bases,
  we can consider as a representative of each isoclass the representation with $M(x) = \bfield^{d_x}$
  and linear maps $M(x \rightarrow y)$ given by left multiplication of a $d_y \times d_x$ matrix over $k$.
  The union (indexed by $\{d_x\}_{x \in P} \in \prod_{x \in P} \mathbb{Z}_{\geq 0}$) of the collections of representatives
  is clearly a set.
    
  We show the claimed inclusions.
  The inclusion $\pfdfdrepk P \subseteq \pfdrepk P$ is clear.
  To see that
  $\fprepk P \subseteq \pfdfdrepk P$, we note that
  since $\fprepk P$ is Krull-Schmidt by Item \ref{prop:catsprops_fp},
  each object $M$ in $\fprepk P$ decomposes as a finite direct sum of indecomposables.
  It remains to show that $M$ is pointwise finite dimensional.
  This follows immediately from the fact that $M$ is isomorphic
  to the cokernel of a morphism between finitely generated projective representations,
  which are pointwise finite dimensional.
\end{proof}

For an essentially small Krull-Schmidt category $\cD$
(especially $\cD = \pfdfdrepk P$ or $\cD = \fprepk P$),
we let $\ind(\cD)$ be \emph{the set of the isoclasses of indecomposable objects} in $\cD$.
For any $Q \subseteq \ind(D)$, a set of isoclasses of indecomposables of $\cD$,
the \emph{additive closure} $\add Q$ of
$Q$,
is the smallest full additive subcategory which contains $Q$ and is closed under taking direct summands.
Thus, $\add Q$ satisfies the property that it contains the zero object
and is closed under isomorphisms, direct sums, and direct summands.
The following property can be immediately checked for such subcategories.

\begin{lemma}
  \label{lemma:kscat}
  Let $\cD$ be an essentially small Krull-Schmidt category and
  $\cC$ be full subcategory of $\cD$ containing the zero object,
  closed under isomorphisms, direct sums, and direct summands.
  Then,
  \begin{enumerate}[label=(\roman*)]
  \item $\cC$ is Krull-Schmidt, and
  \item $\ind(\cC) \subseteq \ind(\cD)$.
  \end{enumerate}
\end{lemma}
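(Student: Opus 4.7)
The plan is to exploit the fact that both conclusions concern decomposability and endomorphism rings, properties that transfer cleanly between $\cC$ and $\cD$ via fullness and the hypothesized closure properties. Before tackling either item, I would record the easy preliminaries: $\cC$ is essentially small since it is a subcategory of the essentially small category $\cD$, and $\cC$ is additive because it contains a zero object and is closed under direct sums while being full in the additive category $\cD$ (so that biproducts and the zero morphism behave as in $\cD$).

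For part (i), I would take an arbitrary $M \in \cC$. Viewed as an object of $\cD$, the Krull-Schmidt property provides a finite decomposition $M \cong \bigoplus_{i=1}^{n} M_i$ in $\cD$ with each $\mathrm{End}_{\cD}(M_i)$ local. Closure of $\cC$ under direct summands places each $M_i$ in $\cC$, so this decomposition already lives in $\cC$ (using closure under isomorphism to realize $M$ itself as the direct sum). Because $\cC$ is a full subcategory, $\mathrm{End}_{\cC}(M_i) = \mathrm{End}_{\cD}(M_i)$ as rings, and hence each summand still has local endomorphism ring when viewed in $\cC$. This gives the Krull-Schmidt property for $\cC$.

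For part (ii), suppose $[M] \in \ind(\cC)$, and assume for contradiction that $M \cong A \oplus B$ is a nontrivial decomposition in $\cD$. Both $A$ and $B$ are direct summands of $M$ in $\cD$, so closure of $\cC$ under direct summands forces $A, B \in \cC$. Closure under isomorphisms then gives $M \cong A \oplus B$ in $\cC$, contradicting indecomposability of $M$ in $\cC$. Therefore $M$ is indecomposable in $\cD$, and $[M] \in \ind(\cD)$.

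There is no genuine obstacle; the content is essentially bookkeeping showing that the three closure hypotheses together with fullness are exactly what is needed. The only micro-point worth flagging is that fullness is what allows the endomorphism ring to be transported unchanged from $\cD$ into $\cC$, so the local endomorphism property is inherited rather than needing to be re-established.
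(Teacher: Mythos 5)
Your proof is correct and supplies exactly the routine verification the paper leaves implicit (the paper merely states that the lemma ``can be immediately checked''). The key observations you highlight---that fullness lets endomorphism rings be read off unchanged in $\cC$, and that closure under direct summands and isomorphisms pulls any $\cD$-decomposition of an object of $\cC$ back into $\cC$---are precisely the right ones.
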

\begin{framed}
  Throughout the rest of this paper, we adopt the following conventions.
  \begin{convention}  
    \label{convention:essentially_small_Krull-Schmidt}
    $\cD$ stands for an essentially small Krull-Schmidt category.
    \label{convention:subcat}
    Also, by
    $
    \cC \subseteq \cD
    $, 
    we mean that $\cC$ is a full subcategory of $\cD$ containing the zero object of $\cD$ and is
    closed under isomorphisms, direct sums, and direct summands.
  \end{convention}
\end{framed}
We will often consider the subcategory $\cC = \add Q$ of $\cD$
for some $Q \subseteq \ind(\cD)$.
Then, Lemma \ref{lemma:kscat} implies that $\cC$ is a Krull-Schmidt category
and $\ind(\cC)=Q$.

\subsection{Additive invariants and their comparisons}\label{sec:additive_invariants_and_their_comparisons}

In this section, we recall from \cite{blanchette_exact_2023} the notion of additive invariants and their comparison framework.

\begin{definition}[Split Grothendieck group]  
  The split Grothendieck group of $\cD$,
  denoted by $\ksp(\cD)$,
  is the free abelian group
  generated by isomorphism classes $[M]$ of objects in $\cD$
  modulo relations $[M_1 \oplus M_2] = [M_1] + [M_2]$ for all objects $M_1,M_2 \in \cD$.
\end{definition}

\begin{lemma}[e.g.\ {\cite[Theorem 2.3.6]{lu2013algebraic}}]
 \label{lemma:kspbasis}
  The set $\ind(\cD)$
  is a basis for $\ksp(\cD)$.
\end{lemma}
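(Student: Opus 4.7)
The plan is to establish the two standard requirements for a basis: (i) $\ind(\cD)$ spans $\ksp(\cD)$, and (ii) $\ind(\cD)$ is $\mathbb{Z}$-linearly independent in $\ksp(\cD)$. The spanning part will be a direct consequence of the existence of Krull-Schmidt decompositions together with the defining relation of the split Grothendieck group; linear independence, which I expect to be the real content, will follow from the uniqueness part of Krull-Schmidt via multiplicity functions.

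For spanning, given any $M \in \cD$, I would invoke the Krull-Schmidt property of $\cD$ to write $M \cong X_1 \oplus X_2 \oplus \cdots \oplus X_n$ with each $X_i$ indecomposable. Iterating the relation $[M_1 \oplus M_2] = [M_1] + [M_2]$ in $\ksp(\cD)$, I obtain $[M] = \sum_{i=1}^{n}[X_i]$, which is a $\mathbb{Z}_{\geq 0}$-linear combination of elements of $\ind(\cD)$. Collecting equal isoclasses shows every generator $[M]$ of $\ksp(\cD)$, and hence every element, lies in the subgroup generated by $\ind(\cD)$.

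For linear independence, the key step is to construct, for each $X \in \ind(\cD)$, an additive multiplicity function $\mu_X$. Concretely, for $M \in \cD$ with Krull-Schmidt decomposition $M \cong Y_1 \oplus \cdots \oplus Y_m$, define $\mu_X(M)$ to be the number of indices $i$ with $Y_i \cong X$. Essential smallness of $\cD$ makes this well-defined on isoclasses, and the uniqueness (up to permutation and isomorphism of summands) of Krull-Schmidt decompositions guarantees independence from the chosen decomposition. Additivity $\mu_X(M_1 \oplus M_2) = \mu_X(M_1) + \mu_X(M_2)$ is immediate from concatenating decompositions of $M_1$ and $M_2$, so $\mu_X$ factors through $\ksp(\cD)$ to give a homomorphism $\tilde\mu_X : \ksp(\cD) \to \mathbb{Z}$ satisfying $\tilde\mu_X([Y]) = \delta_{X,Y}$ for $Y \in \ind(\cD)$.

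Having these homomorphisms in hand, suppose $\sum_{j} n_j [X_j] = 0$ in $\ksp(\cD)$ for pairwise non-isomorphic $X_j \in \ind(\cD)$ and $n_j \in \mathbb{Z}$. Applying $\tilde\mu_{X_i}$ to both sides yields $n_i = 0$ for every $i$, proving linear independence and completing the proof. The main obstacle, and where care is genuinely required, is justifying that $\mu_X$ is well-defined on isomorphism classes of objects; this is precisely the uniqueness clause of Krull-Schmidt, which is available by hypothesis on $\cD$ (and cited earlier via \cite{krause2015krull}), so no new work is needed beyond carefully invoking it.
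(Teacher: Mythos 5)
The paper does not actually give a proof of this lemma; it is stated with a citation to \cite{lu2013algebraic}, so there is no in-paper argument to compare against. Your proof is correct and is the standard argument that the cited source encodes: spanning follows at once from the existence part of Krull--Schmidt decompositions plus the defining relation of $\ksp(\cD)$, and linear independence is detected by the multiplicity functions $\mu_X$, whose well-definedness is exactly the uniqueness clause of Krull--Schmidt. The one point worth emphasizing is that $\mu_X(M)$ is a well-defined \emph{finite} integer because, in a Krull--Schmidt category, every object splits into a \emph{finite} direct sum of indecomposables, and that $\mu_X$ descends to $\ksp(\cD)$ precisely because its additivity annihilates every generating relation $[M_1 \oplus M_2] - [M_1] - [M_2]$; you handle both of these correctly.
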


This lemma allows us to define the positive and negative parts of any element in $\ksp(\cD)$:
\begin{definition}
\label{def:pos_and_neg}
  Let $x \in \ksp(\cD)$ be nonzero. Then, 
  we can write
  \[
    x = \sum_{i=1}^n m_i [I_i]
  \]
  for some $n \in \{1,2,\hdots\}$, $[I_i] \in \ind(\cD)$, and $m_i \in \mathbb{Z} \setminus \{0\}$
  for $i \in \{1,2,\hdots, n\}$.
  We call
  \[
    X_+ := \bigoplus_{i: m_i > 0} I_i^{m_i} \in \cD
    \text{ and }
    X_- := \bigoplus_{i: m_i < 0} I_i^{-m_i} \in \cD
  \]
  the \emph{positive part} and \emph{negative part} of $x \in \ksp(\cD)$ respectively, which are unique up to isomorphism.
  When $x=0$, then the positive and negative parts of $x$ are defined to be $0$. Whether or not $x=0$,
  we have
  \[
    x = [X_+] - [X_-].
  \]
\end{definition}

For any set $S$,
the free abelian group with basis $S$ is denoted by $\mathbb{Z}^{(S)}$.
Then, each element $x \in \mathbb{Z}^{(S)}$ can be written uniquely as
$x=\sum_{s\in S} a_ss\in \mathbb{Z}^{(S)}$ with $a_s\in \Z$ for all $s\in S$
where $a_s = 0$ except for a finite number of $s\in S$.
Then, $x$
can be identified with the function
$h_x: S \rightarrow \mathbb{Z}$ with $h_x(s)=a_s$ for all $s\in S$.
This function is finitely supported.
Let $\mathbb{Z}^{S}$ be the abelian group of all functions $S \rightarrow \mathbb{Z}$.
In general, $\mathbb{Z}^{(S)}\subseteq \mathbb{Z}^S$; however, when $S$ is finite, we have 
$\mathbb{Z}^{S} = \mathbb{Z}^{(S)}$.

Let
$\cC \subseteq \cD$
(cf. Convention~\ref{convention:subcat}).
By
Lemmas~\ref{lemma:kscat} and \ref{lemma:kspbasis},
the sets
$\ind(\cC)$ and $\ind(\cD)$ are bases for $\ksp(\cC)$ and $\ksp(\cD)$, respectively.
Hence, we have:
\begin{equation}
  \label{eq:kspdiagram}
  \begin{tikzcd}
    & \mathbb{Z}^{\ind(\cC)}\ar[r,phantom, sloped, "\subseteq"]
    & \mathbb{Z}^{\ind(\cD)}
    \\
    \ksp(\cC) \ar[r,phantom, sloped, "\cong"]
    & \mathbb{Z}^{(\ind(\cC))}
    \ar[r,phantom, sloped, "\subseteq"]
    \ar[u,phantom, sloped, "\subseteq"]
    & \mathbb{Z}^{(\ind(\cD))}
    \ar[r,phantom, sloped, "\cong"]
    \ar[u,phantom, sloped, "\subseteq"]
    & \ksp(\cD).  
  \end{tikzcd}
\end{equation}

Now, let $G$ be an abelian group.
Given a map $f$ sending each object in $\cD$ to an element of $G$ such that
(i) $f$ is constant on each isomorphism class,
(ii) $f$ sends the zero object to $0_G$, and
(iii) $f(M\oplus N)=f(M)+f(N)$ for all $M,N\in \cD$,
then $f$ naturally induces a group homomorphism $\ksp(\cD) \rightarrow G$.
This motivates:

\begin{definition}[{\cite[Definition 2.13]{blanchette_exact_2023}}]
  \label{definition:additiveinvariant}    
  An \emph{additive invariant on $\cD$} is a group homomorphism 
  $f : \ksp(\cD) \rightarrow G$ for some abelian group $G$. 
\end{definition}

A natural way of defining the fineness and coarseness of an additive invariant is by measuring its discriminating power.
Since the failure to distinguish between two objects \( M, N \in \cD \) is indicated by the containment of \( [M] - [N] \) in \( \ker(f) \),
the following definition provides a natural method for comparing two additive invariants in terms of their discriminating power. 
\begin{definition}
  \label{def:comparison_of_invariants}
  Let $f$ and $g$ be additive invariants on $\cD$. We say that:
  \begin{enumerate}[label=(\roman*)]      
  \item 
    $f$ is \emph{finer} than $g$ if $\ker f \subseteq \ker g$.
    In this case, we write $f \gtrsim g$.
  \end{enumerate}
  Clearly, $\gtrsim$ is a preorder on the collection of additive invariants on $\cD$. We also say that:
 \begin{enumerate}[label=(\roman*),resume] 
\item $f$ and $g$ are \emph{incomparable} if $f \not\gtrsim g$ and $g\not\gtrsim f$.  
  \item 
    $f$ and $g$      
    \emph{have equal discriminating power} if $f \gtrsim g$ and $g \gtrsim f$, i.e. $\ker f=\ker g$.
    In this case, we write $f \sim g$, and we also say that $f$ and $g$ are \emph{equivalent}.\label{item:comparison of invariants2}
  \end{enumerate}
\end{definition}


We remark that any additive invariant $f:\ksp(\cD)\rightarrow G$
is equivalent to $f':\ksp(\cD)\rightarrow \ima f$
that is simply obtained by restricting the codomain of $f$ to its image. 

The relation $\gtrsim$ can be reformulated as follows; in fact, this reformulated version was used as the definition of $\gtrsim$ in \cite[Definition 2.4]{amiot2024invariants}.

\begin{lemma}
  \label{lemma:invariantrelations}
  Let $f$ and $g$ be any two additive invariants on $\cD$. 
  
  \begin{enumerate}[label=(\roman*)]
  \item $f \gtrsim g$ if and only if there exists a homomorphism
    $\phi : \ima f \rightarrow \ima g$ such that $g = \phi f$, i.e. the following diagram commutes.
    \[
      \begin{tikzcd}
        \ksp(\cD) \rar{f} \ar[dr]{}{g} & \ima f \dar[dashed]{\phi} \\
        & \ima g \mathrlap{.}
      \end{tikzcd}
    \]
    In words, a coarser invariant $g$ can be derived from a finer invariant $f$ through a
    homomorphism $\phi$, which can be applied for every object in $\cD$.
    \label{lemma:invariantrelations:num:equiv}

  \item $f \sim g$ if and only if there exists an isomorphism $\phi: \ima f \rightarrow \ima g$
    such that $g = \phi f$.\label{lemma:invariantrelations:num:sim}
   \end{enumerate}
\end{lemma}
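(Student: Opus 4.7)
The plan is to treat both items as a direct consequence of the universal property of the quotient map associated with a group homomorphism (the first isomorphism theorem, applied to images rather than quotients).

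For item \ref{lemma:invariantrelations:num:equiv}, I would first handle the easy direction: given a homomorphism $\phi: \ima f \to \ima g$ with $g = \phi f$, any $x \in \ker f$ satisfies $g(x) = \phi(f(x)) = \phi(0) = 0$, so $\ker f \subseteq \ker g$. For the converse, assuming $\ker f \subseteq \ker g$, I would define $\phi: \ima f \to \ima g$ by $\phi(f(x)) := g(x)$. The key verification is that $\phi$ is well-defined: if $f(x) = f(y)$, then $x - y \in \ker f \subseteq \ker g$, hence $g(x) = g(y)$. Additivity of $\phi$ is inherited from that of $g$, and $g = \phi f$ holds by construction.

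For item \ref{lemma:invariantrelations:num:sim}, I would leverage item \ref{lemma:invariantrelations:num:equiv} in both directions. If $\phi$ is an isomorphism with $g = \phi f$, then $f = \phi^{-1} g$, so both $f \gtrsim g$ and $g \gtrsim f$ follow from item \ref{lemma:invariantrelations:num:equiv}. Conversely, $f \sim g$ yields homomorphisms $\phi: \ima f \to \ima g$ with $g = \phi f$ and $\psi: \ima g \to \ima f$ with $f = \psi g$. Then $\psi \phi f = \psi g = f$ on all of $\ksp(\cD)$, which forces $\psi \phi = \mathrm{id}_{\ima f}$ since $\ima f$ is generated (as a set) by the values $f(x)$; symmetrically, $\phi \psi = \mathrm{id}_{\ima g}$. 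Hence $\phi$ and $\psi$ are mutually inverse isomorphisms.

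I do not anticipate any genuine obstacle here; the only subtle point is confirming that the candidate $\phi$ defined on the image is well-defined, which is precisely what the hypothesis $\ker f \subseteq \ker g$ buys. The result holds entirely at the level of abelian groups and does not rely on any specific structure of $\ksp(\cD)$, which is why the formulation is convenient for the abstract comparison of invariants used later in the paper.
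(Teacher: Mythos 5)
Your proof is correct and takes essentially the same approach as the paper. The only cosmetic difference is that you construct $\phi$ directly on $\ima f$ via $\phi(f(x)) := g(x)$ and verify well-definedness, whereas the paper routes through the canonical quotient map $\ksp(\cD)/\ker f \to \ksp(\cD)/\ker g$ and the first isomorphism theorem; these are the same argument in two standard phrasings. For item (ii) you verify the two maps from item (i) are mutually inverse by cancellation on generators, while the paper observes that $\ker f = \ker g$ makes the induced quotient map literally the identity — again the same underlying idea.
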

The proof of Lemma~\ref{lemma:invariantrelations} is elementary, and thus we omit it.
Item \ref{lemma:invariantrelations:num:sim} implies that when two additive invariants have equal discriminating power, one completely determines the other. This justifies the two terms introduced in Definition \ref{def:comparison_of_invariants} \ref{item:comparison of invariants2}.


\begin{remark}
  \label{remark:ghomrelations}
  The proof of Lemma~\ref{lemma:invariantrelations} 
  does not actually utilize the fact that the domain is a split Grothendieck group.
  More general statements are given as follows:

  Let $f: H \rightarrow G$ and $g: H \rightarrow G'$
  be any two homomorphisms of abelian groups.
  \begin{enumerate}[label=(\roman*)]
  \item $\ker f \subseteq \ker g$ if and only if
    there exists a homomorphism $\phi : \ima f \rightarrow \ima g$ with $g = \phi f$.\label{remark:ghomrelations:num:equiv}
  \item $\ker f = \ker g$ if and only if
    there exists an isomorphism $\phi: \ima f \rightarrow \ima g$
    with $g = \phi f$.
    \label{remark:ghomrelations:num:sim}
  \end{enumerate}
\end{remark}

Next, we connect the kernels of additive invariants on $\cD$ with pairs of objects in $\cD$.

\begin{lemma}
  \label{lemma:invariant_kernel}
  Let $f$ and $g$ be any two additive invariants on $\cD$.
  \begin{enumerate}[label=(\roman*)]
  \item
    \label{lemma:invariantrelations:num:kerinterpret}
    Let $x=[X_+]-[X_-] \in \ksp(\cD)$ (cf. Definition \ref{def:pos_and_neg}).
    Then, $x \in \ker f$ if and only if
    \[
      f([X_+]) = f([X_-]).
    \]
    In words, the elements of $\ker f$ correspond to the pairs of objects in $\cD$ that $f$ cannot distinguish, and vice versa.
  \item
    \label{lemma:invariantrelations:num:interpret}
    $f \not \gtrsim g$ if and only if
    there exist $M, N \in \cD$ such that
    \begin{equation}
      \label{eq:g_can_distinguish_but_f_cant}
      f([M]) = f([N])
      \text{ but }
      g([M]) \neq g([N]).
    \end{equation}
    In words, $f$ not being finer than $g$
    means that
    there exists a pair of objects that $g$ can distinguish, but $f$ cannot.
  \end{enumerate}
\end{lemma}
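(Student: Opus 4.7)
The plan is to observe that both parts are direct consequences of $f$ and $g$ being group homomorphisms on $\ksp(\cD)$ together with the canonical decomposition $x = [X_+] - [X_-]$ from Definition~\ref{def:pos_and_neg}. There is no real obstacle; the statement is essentially a reformulation of the definition of kernel in terms of the generators of $\ksp(\cD)$ given by isoclasses of objects of $\cD$.

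For Part \ref{lemma:invariantrelations:num:kerinterpret}, I would simply apply $f$ to the equation $x = [X_+] - [X_-]$. Since $f$ is a homomorphism of abelian groups, we get $f(x) = f([X_+]) - f([X_-])$ in $G$, so $f(x) = 0_G$ if and only if $f([X_+]) = f([X_-])$. The interpretation in words then follows because every element of $\ksp(\cD)$ arises in this form (uniquely up to isomorphism of $X_+$ and $X_-$) by Definition~\ref{def:pos_and_neg}, and the condition $f([X_+]) = f([X_-])$ is precisely the assertion that $f$ fails to distinguish $X_+$ from $X_-$.

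For Part \ref{lemma:invariantrelations:num:interpret}, I would prove both directions using Part \ref{lemma:invariantrelations:num:kerinterpret}. For the forward direction, assume $f \not\gtrsim g$, so that $\ker f \not\subseteq \ker g$; pick any $x \in \ker f \setminus \ker g$ and write $x = [X_+] - [X_-]$. Setting $M := X_+$ and $N := X_-$, Part \ref{lemma:invariantrelations:num:kerinterpret} applied to $f$ yields $f([M]) = f([N])$, while $x \notin \ker g$ gives $g([M]) - g([N]) = g(x) \neq 0_{G'}$, so $g([M]) \neq g([N])$. For the backward direction, given $M, N \in \cD$ satisfying \eqref{eq:g_can_distinguish_but_f_cant}, the element $x := [M] - [N] \in \ksp(\cD)$ satisfies $f(x) = 0$ but $g(x) \neq 0$, so it lies in $\ker f \setminus \ker g$ and hence $\ker f \not\subseteq \ker g$, i.e.\ $f \not\gtrsim g$. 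Note that in this last step $M$ and $N$ need not be the positive and negative parts of $x$, but this does not matter since we only need the existence of some element witnessing $\ker f \not\subseteq \ker g$.
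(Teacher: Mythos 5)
Your proposal is correct and follows essentially the same argument as the paper: Part (i) by applying the group homomorphism $f$ to $x = [X_+]-[X_-]$, and Part (ii) by passing between an element $x \in \ker f \setminus \ker g$ and its positive/negative parts (resp.\ the element $[M]-[N]$) in the two directions. The remark that $M,N$ need not be the positive and negative parts of $[M]-[N]$ in the backward direction is a harmless and accurate observation.
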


\begin{proof}
\begin{enumerate}[label=(\roman*)]
  \item
    This follows immediately from the fact that
    $x = [X_+] - [X_-]$, and $f$ is a group homomorphism.

  \item Assume that $f \not \gtrsim g$,
    i.e. there exists $x\in \ker f$ with $x \not\in \ker g$.
    Let $X_+$ and $X_-$ respectively be the positive and negative parts of $x$.
    By
    Item~\ref{lemma:invariantrelations:num:kerinterpret},  
    $M:= X_+$ and $N:= X_-$
    satisfy Condition~(\ref{eq:g_can_distinguish_but_f_cant}).

    Conversely, given any $M, N \in \cD$ satisfying
    Condition~(\ref{eq:g_can_distinguish_but_f_cant}), 
    it is clear that $x := [M] - [N]$ satisfies
    $x \in \ker f$ and $x \not\in \ker g$, completing the proof.      \end{enumerate}  
\end{proof}

The following definition is useful for clarifying the discriminating power of additive invariants.
\begin{definition}
  \label{definition:complete}
  Let $\cC\subseteq \cD$ (cf. Convention~\ref{convention:essentially_small_Krull-Schmidt}).
  An additive invariant
  $f: \ksp(\cD) \rightarrow G$
  is said to be \emph{$\cC$-complete}
  if
  $
  \ker f \cap \ksp(\cC) = \{ [0] \}
  $.
\end{definition}

Let $\iota: \ksp(\cC) \subseteq \ksp(\cD)$ be the inclusion map
via the identifications made in Diagram~(\ref{eq:kspdiagram}).
Then, for any additive invariant $f: \ksp(\cD) \rightarrow G$,
we have $\ker f \cap \ksp(\cC) = \ker (f\iota)$.
Thus, 
$f$ is $\cC$-complete 
if and only if
$f$ restricted to $\ksp(\cC)$, denoted $f|_{\ksp(\cC)}$, is injective,
i.e. $f$ 
distinguishes any two distinct objects of $\cC$
(cf. Lemma~\ref{lemma:invariant_kernel}\ref{lemma:invariantrelations:num:kerinterpret}).


\subsection{First examples of additive invariants}
\label{subsec:invariants_examples}

In this section, we discuss several additive invariants. Some of these invariants will turn out to be \emph{barcoding invariants} that we will define in a later section. Readers who are familiar with these invariants may skip these examples.

\begin{example}[Trivial invariants]
  \label{ex:trivial_invariant}
  A trivial invariant is given by the identity map
  \[
    1 : \ksp(\cD) \rightarrow \ksp(\cD)
  \]
  which satisfies
  $1 \gtrsim g$ for any additive invariant $g$ on $\cD$.
  
  This invariant corresponds to an indecomposable decomposition in the following sense: Let $M\in \cD$ with $M \cong \bigoplus_{i=1}^n N_i$ with each $N_i$ indecomposable.
  Then, we have
  \[
    1([M]) = [M] = \sum_{i=1}^n [N_i]
  \]  
  and thus, under the isomorphism
  $\ksp(\cD) \cong \mathbb{Z}^{(\ind(\cD))}$ (cf. Lemma~\ref{lemma:kspbasis}),
  $1([M])$ is the map that assigns each indecomposable
  its multiplicity as a direct summand of $M$.
  Thus, we also call it the \emph{multiplicity invariant}.  
  If $P$ is a totally ordered set,
  setting $\cD = \pfdfdrep P$,
  the invariant $1$ 
  corresponds to
  the   
  \emph{barcode} or \emph{persistence diagram}
\cite{abeasis1981geometry,zomorodian2005computing,cohen2007stability,crawley2015decomposition}.
    
  At the other extreme of the spectrum, there is the \emph{zero invariant}
  \[
    0 : \ksp(\cD) \rightarrow \{0\}.
  \]
  Clearly, for any additive invariant $g$ on $\cD$, we have $g \gtrsim 0$.
\end{example}

\begin{example}
  \label{ex:restricted_multiplicity}
  For any $Q \subseteq \ind(\cD)$,
  the natural   
  projection
  \[
    \pi^Q: \mathbb{Z}^{(\ind(\cD))}\ (\cong \ksp(\cD)) \rightarrow  \mathbb{Z}^{(Q)}
  \]
  defines an additive invariant,
  which can be thought of as the multiplicity invariant \emph{restricted} to the indecomposables in $Q$.
  For example, if $\cD=\pfdfdrep P$, and $Q$ is the set of isoclasses of
  interval representations
  of $P$, then for each $M \in \cD$,
  $\pi^Q([M]) =:\pi^\Int([M])$ 
  assigns each interval representation its multiplicity
  as a direct summand of $M$.
\end{example}

\begin{example}
  \label{ex:dimensionvector}
  The \emph{dimension vector (a.k.a. Hilbert function)} is the additive invariant
  $\dimv : \ksp(\pfdfdrep P) \rightarrow \mathbb{Z}^P$
  given by\footnote{Note that, among the three subcategories considered in Definition \ref{definition:cats}, $\pfdfdrep P$ is the largest Krull-Schmidt category (cf. Proposition \ref{prop:catsprops}).
    It is convenient to work with Krull-Schmidt categories as
    Lemma~\ref{lemma:kspbasis} provides a basis for its split Grothendieck group.}
  \[
    [M]\mapsto \dimv(M):= ( \dim(M(x)) )_{x\in P}.
  \]
\end{example}

A finer invariant than the dimension vector is the rank invariant:

\begin{example}
  \label{ex:rank_invariant} 
  Let $\leq_P$ be the partial order on the poset $P$.
  The \emph{rank invariant} \cite{carlsson2009theory} is the additive invariant
  $\rk:\ksp(\pfdfdrep P)\rightarrow \Z^{\leq_P}$
  given by
  \[
    [M]\mapsto \rk_M:=(\rank\ M(x\rightarrow y))_{(x, y)\in \leq_P}.
  \] 

\end{example}

\begin{remark}
  \label{rem:leq_and_seg} 
  Via the bijection from $\leq_P$ to $\Seg(P)$  given by $(x, y)\mapsto [x,y]$,
  we have the induced isomorphism $\Z^{\Seg(P)}\cong \Z^{\leq_P}$.
\end{remark}

A finer invariant than the rank invariant is the generalized rank invariant.
First, recall that when $P$ is connected and $M\in \pfdrep P$,
the \emph{rank} of $M$, denoted by $\rank(M)$,
is defined to be the rank of the canonical linear map from the limit of $M$ to the colimit of $M$ \cite{kinser2008rank,kim2021generalized}.
This rank, by definition, does not exceed $\min_{x\in P} \dim_k M(x)$ and thus is finite.
\begin{example}  
  \label{ex:GRI}  
  Let $Q\subseteq \Int(P)$.
  The \emph{generalized rank invariant over $Q$} 
  is the additive invariant $\rk^Q:\ksp(\pfdfdrep P)\rightarrow \Z^{Q}$ given by
  \begin{align*}
    \label{eq:GRI}
    [M]\mapsto \rk_M^Q:=\left(\rank\left(M|_I\right)\right)_{I\in Q}
  \end{align*}
  where $M|_I$ is the restriction of $M$ to $I$,
  as a full subposet of $P$ \cite[Section 3]{kim2021generalized} \cite{kim2023persistence}.
  If $Q\supseteq \Seg(P)$, by postcomposing the natural projection $\Z^{Q}\rightarrow \Z^{\Seg(P)}$
  and the isomorphism $\Z^{\Seg(P)}\cong \Z^\leq$, we retrieve the rank invariant.
  When $Q=\Int(P)$, for brevity, we write $\rk^\Int$ instead of $\rk^{\Int(P)}$.  
\end{example}

In what follows, we will see a generalization of the generalized rank invariant.

A \emph{compression system} for $P$ \cite{asashiba2019approximation,asashiba2024interval}
is a family $\xi = (\xi_I)_{I \in \Int(P)}$,
where each $\xi_I$ is a poset morphism $\xi_I : Q_I \rightarrow P$
(i.e.\ a functor, when viewing posets as categories)
from some finite connected poset $Q_I$, satisfying the following conditions.
\begin{enumerate}
\item $\xi_I$ factors through the poset inclusion $I \hookrightarrow P$, for each $I \in \Int(P)$.
\item $\xi_I(Q_I)$ contains all maximal and all minimal elements of $I$, for each $I \in \Int(P)$.
\item For each $I = [x,y] \in \Seg(P) \subseteq \Int(P)$,
  there exists a $[x', y'] \in \Seg(Q_I)$ with $\xi_I(x') = x$ and $\xi_I(y') = y$.
\end{enumerate}
Given a compression system $\xi$,
for each $I \in \Int(P)$, $\xi_I$ defines a functor
$
R_I: \Repk P \rightarrow \Repk Q_I
$
via precomposition, i.e.\ for each $M \in \Repk P$, we have
$R_I(M) = M \circ \xi_I$.
\begin{example}
  \label{ex:compression_system} 
  For any $M \in \pfdfdrep P$ and $I \in \Int(P)$,
  the \emph{compression multiplicity of $I$ in $M$ under $\xi$}
  is defined to be 
  the multiplicity 
  of $R_I(k_I)$ as a direct summand of $R_I(M)$.  
  By additivity \cite[Proposition~3.13]{asashiba2024interval}, we obtain
  the additive invariant   
  \[
    c^\xi : \ksp(\pfdfdrep P) \rightarrow \mathbb{Z}^{\Int(P)}
  \]
  which we call the \emph{compression multiplicity invariant}.\footnote{
    The compression multiplicity invariant is also called the \emph{interval rank invariant}
    \cite[Definition~4.13]{asashiba2024interval}.}
  Now, we clarify how the   
  compression multiplicity invariant
  generalizes the generalized rank invariant
  (Example~\ref{ex:GRI}).
  
  Assume that $P$ is finite and define the compression system $\xi:=\tot = (\tot_I)_{I\in\Int(P)}$ by setting, for each $I \in \Int(P)$,
  $
  Q_I := I \text{ and } \tot_I: I \hookrightarrow P 
  $
  be the poset inclusion.
  Then, the compression multiplicity $c^\tot$
  is
  equal to the generalized rank invariant over $\Int(P)$ 
  \cite[Lemma~3.1]{chambers2018persistent} 
  \cite[Remark~6.16~and~Lemma~6.17]{asashiba2024interval}.
  \begin{remark}\label{rem:essentially_covers}
    A sufficient condition for a compression system \( \xi \) to yield a compression multiplicity \( c^\xi \) identical to \( c^\tot \) is known, as a generalization of  \cite[Theorem 3.12]{dey2024computing}. More specifically,
    when $\xi_I$ \emph{essentially covers} $I$ relative to $\tot$
    for all $I \in \fpInt(P)$,
    we have $c^\xi=c^\tot=\rk$
    \cite[Definition 6.7 and Corollary~6.11]{asashiba2024interval}.
  \end{remark}
\end{example}

Next, we recall the dim-Hom invariant~\cite{blanchette2024homological}:

\begin{example} 
\label{ex:dim-hom}
  Let $Q$ be any set of isoclasses of indecomposable objects in $\fprepk P$.
  The \emph{dim-Hom invariant over $Q$} is the additive invariant
  $\dimhom_{\cD}^Q : \ksp(\cD) \rightarrow \mathbb{Z}^Q$
  given by
  \[
    \dimhom_\cD^Q([M]) = (\dim_\bfield\Hom_\cD(L,M))_{[L] \in Q},
  \] 
  where Proposition~\ref{prop:catsprops} \ref{prop:catsprops_fp} guarantees that $\dim_\bfield\Hom_\cD(L,M)\in \Z$ for each $[L]\in Q$. 
  When $Q=\Int(P)$ (cf. Remark \ref{rem:miscellaneous}~\ref{item:miscellaneous-abuse-of-notation}),
  we write $\dimhom_\cD^\Int$ instead of $\dimhom_\cD^{\Int(P)}$.
  When $\cD$ is clear, we also write
  $\dimhom^\Int$.
\end{example}


\subsection{The M\"obius inversion formula and additive invariants}
\label{subsec:Mobius_inversion}

There have been many works utilizing M\"obius inversion
in TDA, starting from \cite{patel2018generalized}.
In this section, we review the M\"obius inversion formula (Section \ref{subsec:general_construction})
and explain how one obtains an additive invariant from another additive invariant
via M\"obius inversion (Section \ref{subsubsec:Mobius_inversion_of_additive_invariants}).

\subsubsection{General construction}
\label{subsec:general_construction}
We review the notions of incidence algebra and
M\"obius inversion \cite{rota1964foundations, stanley2011enumerative} in general.
Throughout this subsection, let $Q$ denote a  \emph{locally finite} poset,
i.e. for all $p,q\in Q$ with $p\leq q$, the segment
$
  [p,q]
$
is finite
(we note that in certain examples discussed later,
the poset $Q$ will be considered as a subset of $\ind(\cD)$ with an appropriate partial order).

Fix a field $\mathbb{F}$ (which may be different from the field $\bfield$ in the previous sections).
Given any function $\alpha:\Seg(Q)\rightarrow \mathbb{F}$,
we write $\alpha(p,q)$ for $\alpha([p,q])$.
\label{nom:incidence_algebra}
The \emph{incidence algebra} $I(Q, \mathbb{F})$ of $Q$ over $\mathbb{F}$
is the $\mathbb{F}$-algebra of all functions $\Seg(Q)\rightarrow \mathbb{F}$
with the usual structure of a vector space over $\mathbb{F}$,
where multiplication is given by convolution:
\begin{equation}
  \label{eq:convolution}
  (\alpha\, \beta)(p,r):=\sum_{q\in [p,r]}\alpha(p,q)\cdot\beta(q,r).
\end{equation}
Since $Q$ is locally finite, the above sum is finite and hence $\alpha\beta$ is well-defined.
The \emph{Dirac delta function} $\delta_Q\in I(Q,\mathbb{F})$ is given by 
\begin{equation}
  \label{eq:delta_function}
  \delta_Q(p,q):=
  \begin{cases}
    1,& p=q\\
    0,& \text{otherwise,} 
  \end{cases}
\end{equation}
and serves as the two-sided multiplicative identity of $I(Q,\mathbb{F})$.

\begin{remark}[{cf.~\cite{stanley2011enumerative}}]
  \label{rem:invertibility}
  An element $\alpha\in I(Q,\mathbb{F})$ admits a multiplicative inverse
  if and only if
  $\alpha(q,q)\neq 0$ for all $q\in Q$. 
\end{remark}

Another important element of $I(Q,\mathbb{F})$ is the \emph{zeta function}:
\begin{equation}
  \label{eq:zeta_function}
  \zeta_Q(p,q) := 1 \text{ for all } [p,q] \in \Seg(P).
\end{equation}
By Remark~\ref{rem:invertibility},
the zeta function $\zeta_Q$ admits a multiplicative inverse,
which is called the \emph{M\"obius function} $\mu_{Q}\in I(Q,\mathbb{F})$. 
The M\"obius function can be computed recursively as 
\begin{equation}\label{eq:mobius}
  \mu_{Q}(p,q) =
  \begin{cases}
    1,                                     & {p=q,}\\
    -\sum\limits_{p\leq r< q}\mu_{Q}(p,r), & {p<q}.
  \end{cases}
\end{equation}

Let $\mathbb{F}^Q$ denote the vector space of all functions $Q\rightarrow \mathbb{F}$. 
\label{nom:down_p}
Also, for $q\in Q$, let
\[
  q^\downarrow:=\{p\in Q: p\leq q\},
\]
called a \emph{principal ideal}.
Let $\convf(Q, \mathbb{F}) \subseteq \mathbb{F}^Q$ be the subset
\[
  \convf(Q, \mathbb{F}) := \{ f \in \mathbb{F}^Q \mid
  \text{for every } q\in Q, 
    f(r)=0 \text{ for all but finitely many } r\in q^\downarrow\}.
\]
This is clearly a subspace of $\mathbb{F}^Q$.
Elements of $\convf(Q, \mathbb{F})$ are said to be  \emph{convolvable} (over $Q$).

Each element in $I(Q,\mathbb{F})$ acts on  $\convf(Q, \mathbb{F})$ by right multiplication:
for any $f\in \convf(Q, \mathbb{F})$ and for any $\alpha\in I(Q,\mathbb{F})$, we define:
\begin{equation}
  \label{eq:function_times_matrix}
  (f\ast\alpha)(q):=\sum_{p\leq q}f(p)\,\alpha(p,q).
\end{equation}
It can be easily checked that for a locally finite poset $Q$,
$f \ast \alpha$ belongs to $\convf(Q, \mathbb{F})$.

\begin{remark}
  \label{rem:right_multiplication}
  Let $Q$ be a locally finite poset and $\alpha\in I(Q,\mathbb{F})$.
  \begin{enumerate}[label=(\roman*)]
  \item The right multiplication map
    $\ast \alpha: \convf(Q, \mathbb{F}) \rightarrow \convf(Q, \mathbb{F})$
    given by $f\mapsto f\ast \alpha$
    is an automorphism if and only if $\alpha$ is invertible.\label{item:right_multiplication}
  \item By Remark~\ref{rem:invertibility} and the previous item,
    the right multiplication map $\ast\zeta_Q$ by the zeta function
    is an automorphism on $\convf(Q, \mathbb{F})$ with inverse $\ast\mu_{Q}.$\label{item:zeta_defines_an_automorphism}
  \item As a special case, if $q^\downarrow$ is a finite set for each $q \in Q$
    (for example if $Q$ itself is a finite poset),
    then
    every function $Q\rightarrow \mathbb{F}$ is convolvable:
    $\convf(Q, \mathbb{F}) = \mathbb{F}^Q$.\label{item:finite_principal_ideals}
  \end{enumerate}  
\end{remark}

The M\"obius inversion formula 
is a powerful tool in combinatorics with widespread applications:
\begin{theorem}[M\"obius Inversion formula~{\cite{rota1964foundations}}]\label{thm:mobius}
Let $Q$ be a locally finite poset.
For any pair of convolvable  functions $f,g:Q\rightarrow \mathbb{F}$,
\begin{equation}
  \label{eq:zeta}
  \displaystyle g(q)=\sum_{r\leq q} f(r) \text{ for all } q\in Q
\end{equation}
if and only if 
\begin{equation}
  \label{eq:zeta_inverse}
  \displaystyle f(q)=\sum_{r\leq q} g(r)\cdot \mu_{Q}(r,q) \text{ for all } q\in Q.
\end{equation}
\end{theorem}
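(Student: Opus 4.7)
The plan is to recognize that the two sides of the claimed equivalence are simply restatements of the action of right multiplication by $\zeta_Q$ and $\mu_Q$ respectively on convolvable functions. Once this translation is made, the equivalence reduces immediately to the fact, recorded in Remark~\ref{rem:right_multiplication}~\ref{item:zeta_defines_an_automorphism}, that $\ast \zeta_Q$ and $\ast \mu_Q$ are mutually inverse automorphisms of $\convf(Q, \mathbb{F})$.

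First I would unfold the definitions. Using Equations~\eqref{eq:function_times_matrix} and \eqref{eq:zeta_function}, for any $f \in \convf(Q, \mathbb{F})$,
\[
  (f \ast \zeta_Q)(q) = \sum_{p \leq q} f(p) \, \zeta_Q(p, q) = \sum_{p \leq q} f(p),
\]
so Equation~\eqref{eq:zeta} is precisely the assertion $g = f \ast \zeta_Q$ in $\convf(Q, \mathbb{F})$. Analogously, directly from \eqref{eq:function_times_matrix},
\[
  (g \ast \mu_Q)(q) = \sum_{r \leq q} g(r) \, \mu_Q(r, q),
\]
so Equation~\eqref{eq:zeta_inverse} is the assertion $f = g \ast \mu_Q$.

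Second, I would invoke Remark~\ref{rem:right_multiplication}~\ref{item:zeta_defines_an_automorphism}: since $\zeta_Q$ is invertible in the incidence algebra $I(Q, \mathbb{F})$ with inverse $\mu_Q$, right multiplication by $\zeta_Q$ is an automorphism of $\convf(Q, \mathbb{F})$ whose inverse is right multiplication by $\mu_Q$. Applying this automorphism (or its inverse) to both sides of the respective equations yields $g = f \ast \zeta_Q$ if and only if $f = g \ast \mu_Q$, which is exactly the claimed equivalence.

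The only point requiring care is that the two right multiplications actually preserve $\convf(Q, \mathbb{F})$ so that Remark~\ref{rem:right_multiplication} is applicable; this is guaranteed by the local finiteness of $Q$ and has already been addressed in the discussion preceding the theorem. Consequently, I do not anticipate any genuine obstacle: the proof is essentially a bookkeeping exercise once the incidence-algebra framework of Section~\ref{subsec:general_construction} has been set up.
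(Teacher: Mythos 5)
Your proof is correct and follows essentially the same route as the paper: translate Equations~\eqref{eq:zeta} and \eqref{eq:zeta_inverse} into $g = f \ast \zeta_Q$ and $f = g \ast \mu_Q$ respectively, then use that $\ast\zeta_Q$ and $\ast\mu_Q$ are mutually inverse on $\convf(Q,\mathbb{F})$. The only difference is that you spell out the unfolding of definitions and cite Remark~\ref{rem:right_multiplication} explicitly, whereas the paper states the step more tersely as multiplication on the right by $\zeta_Q^{-1} = \mu_Q$.
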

\begin{proof}
  Equation~\eqref{eq:zeta} can be represented as $g=f\ast\zeta_Q$.
  By multiplying both sides by $\zeta_Q^{-1}=\mu_Q$ on the right,
  we have $g\ast\mu_Q=f$, which is precisely Equation~\eqref{eq:zeta_inverse}.
\end{proof}

The function $f=g\ast\mu_Q$ is referred to as the \emph{M\"obius inversion} of $g$ (over $Q$).

\begin{remark}
  \label{rem:automorphism_on_integer-valued_maps}
  In Theorem~\ref{thm:mobius},
  further assume that $\mathbb{F}$ is a field containing the ring of integers $\Z$,
  such as the rationals or the reals.
  Since the M\"obius function $\mu_Q$ is an integer-valued map (cf. Equation~\eqref{eq:mobius}),
  the automorphism $\ast \mu_Q$ on $\convf(Q, \mathbb{F})$
  described in Remark \ref{rem:right_multiplication} \ref{item:right_multiplication}
  can be restricted to the automorphism on
  $\convf(Q, \mathbb{Z})$, the abelian group of integer-valued convolvable functions on $Q$.

  Furthermore,
  similar to Remark~\ref{rem:right_multiplication} \ref{item:finite_principal_ideals},
  if every prinicipal ideal of $Q$ is finite
  (for example if $Q$ itself is a finite poset),
  then $\convf(Q, \Z) = \Z^Q$, and thus this gives an automorphism $\ast \mu_Q$ on
  $\Z^Q$.
\end{remark}

\subsubsection{M\"obius inversion of additive invariants}
\label{subsubsec:Mobius_inversion_of_additive_invariants}

 The aim of this section is to demonstrate that the M\"obius inversion of an additive invariant $f$, whenever well-defined, is equivalent to 
$f$ in the sense of Definition \ref{def:comparison_of_invariants} \ref{item:comparison of invariants2}.

For any set $Q$ of isoclasses of indecomposables in $\cD$, let 
$\cC = \add Q \subseteq \cD$ (cf. Convention \ref{convention:essentially_small_Krull-Schmidt}).
We also assume that this set $Q$ is equipped with a partial order $\leq$,
and that $Q$ is a locally finite poset under this partial order.
By Remarks~\ref{rem:right_multiplication} and \ref{rem:automorphism_on_integer-valued_maps},
the right multiplication map $\ast\mu_Q$ is an automorphism on $\convf(Q, \Z)$.
Thus, whenever an additive invariant $f:\ksp (\cD)\rightarrow \convf(Q, \Z)$ is given, 
we obtain another additive invariant 
$g: \ksp(\cD) \rightarrow \convf(Q, \Z)$ by defining, for each $[M] \in \ksp(\cD)$,
\[
  g([M]) := f([M]) \ast \mu_Q,
\]
which is the M\"obius inversion of $f([M])$.
This gives the commutative diagram
\[
  \begin{tikzcd}
    \ksp(\cD)\ar[dr,swap]{}{g} \rar{f} & \convf(Q, \Z) \dar{\ast \mu_Q}\\
    & \convf(Q, \Z)
  \end{tikzcd}
\]
i.e.\ $g = (\ast\mu_Q) \circ f$ where $\ast\mu_Q$ is pointwise right multiplication by $\mu_Q$.
By a slight abuse of language we also call $g$ the M\"obius inversion of $f$, where it should be noted
that the M\"obius inversion is taken ``pointwise'', i.e.\ for each $[M] \in \ksp(\cD)$, and not on $f$ itself.
By Lemma~\ref{lemma:invariantrelations}, we have $f \sim g$. 
Hence, from Theorem \ref{thm:mobius}
and Remark~\ref{rem:automorphism_on_integer-valued_maps} we obtain the following corollary.
\begin{corollary}
  \label{cor:Mobius_does_not_affect_kernel}
  \begin{enumerate}[label=(\roman*)]
 \item  Let $Q$ be a set of isoclasses of indecomposables in $\cD$.
  Let $Q$ be equipped with a partial order $\leq$
  so that $(Q, \leq)$ is a locally finite poset. Then,
  any additive invariant
  $f: \ksp(\cD) \rightarrow \convf(Q, \Z)$
  is equivalent to its M\"obius inversion   
  $(\ast\mu_Q)\circ f: \ksp(\cD) \rightarrow \convf(Q, \Z)$. \label{item:Mobius_does_not_affect_kernel1}

 \item  Additionally, if every principal ideal of $Q$ is finite, then
  any additive invariant
  $f: \ksp(\cD) \rightarrow \Z^Q$
  is equivalent to its M\"obius inversion   
  $(\ast\mu_Q)\circ f: \ksp(\cD) \rightarrow \Z^Q$. \label{item:Mobius_does_not_affect_kernel2}
\end{enumerate}  
\end{corollary}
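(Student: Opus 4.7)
The plan is to deduce both items from the principle that postcomposing an additive invariant with a group automorphism of its codomain preserves the kernel, and hence preserves the equivalence class under $\sim$ (cf.\ Definition \ref{def:comparison_of_invariants}\ref{item:comparison of invariants2}). Concretely, by construction (Section \ref{subsubsec:Mobius_inversion_of_additive_invariants}) the M\"obius inversion of $f$ is $g = (\ast\mu_Q) \circ f$, so it suffices to identify $\ast\mu_Q$ as an automorphism of the relevant integer-valued function group and then apply Lemma \ref{lemma:invariantrelations}\ref{lemma:invariantrelations:num:sim}.

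For item \ref{item:Mobius_does_not_affect_kernel1}, the first step is to show that $\phi := \ast\mu_Q$ is a group automorphism of $\convf(Q, \Z)$. Since $\mu_Q$ is integer-valued by the recursion in Equation~\eqref{eq:mobius}, right multiplication by $\mu_Q$ sends $\convf(Q, \Z)$ to itself and is clearly a group homomorphism. By Remark \ref{rem:invertibility}, $\mu_Q$ is invertible in $I(Q, \mathbb{F})$ with $\mu_Q^{-1} = \zeta_Q$, and $\zeta_Q$ is also integer-valued; hence $\ast \zeta_Q$ is a group homomorphism on $\convf(Q, \Z)$ serving as a two-sided inverse to $\phi$ (this is exactly the integer-valued version of Remark \ref{rem:right_multiplication}\ref{item:zeta_defines_an_automorphism} established in Remark \ref{rem:automorphism_on_integer-valued_maps}). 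The second step is to observe that, since $g = \phi \circ f$ with $\phi$ an automorphism of the ambient group, the restriction $\phi|_{\ima f} : \ima f \to \ima g$ is a well-defined group isomorphism (injectivity from $\phi$ being injective, surjectivity from $\ima g = \phi(\ima f)$). Lemma \ref{lemma:invariantrelations}\ref{lemma:invariantrelations:num:sim} then yields $f \sim g$.

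For item \ref{item:Mobius_does_not_affect_kernel2}, the additional hypothesis that every principal ideal of $Q$ is finite gives $\convf(Q, \Z) = \Z^Q$ by Remark \ref{rem:right_multiplication}\ref{item:finite_principal_ideals} (see also Remark \ref{rem:automorphism_on_integer-valued_maps}). The argument of the previous paragraph then applies verbatim with $\Z^Q$ in place of $\convf(Q, \Z)$.

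There is essentially no obstacle: the only care needed is the bookkeeping to confirm that $\ast\mu_Q$ restricts well to integer-valued convolvable functions, which is already handled by the integrality of both $\mu_Q$ and $\zeta_Q$. The corollary thus amounts to the slogan that M\"obius inversion is ``harmless'' for questions of discriminating power, because it is realized by a group-theoretic automorphism of the codomain and therefore can neither merge nor separate any kernel classes.
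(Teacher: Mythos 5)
Your proof is correct and follows essentially the same route as the paper: establish that right multiplication by $\mu_Q$ is a group automorphism of $\convf(Q,\Z)$ (via the integrality of $\mu_Q$ and $\zeta_Q$ as in Remark~\ref{rem:automorphism_on_integer-valued_maps}), restrict it to an isomorphism $\ima f \to \ima g$, and conclude $f \sim g$ by Lemma~\ref{lemma:invariantrelations}\ref{lemma:invariantrelations:num:sim}. The only cosmetic slip is citing Remark~\ref{rem:invertibility} for $\mu_Q^{-1} = \zeta_Q$, whereas that identity is just the definition of $\mu_Q$ as the inverse of $\zeta_Q$; Remark~\ref{rem:invertibility} is what guarantees $\zeta_Q$ is invertible in the first place.
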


Examples of additive invariants that are involved with M\"obius inversion follow.
Let $P$ be any poset.

\begin{example}
  Consider the set $\Seg(P)$ of segments in $P$.
  Via the bijection from $\leq$ to $\Seg(P)$ given by $(x, y)\mapsto [x,y]$, 
  the rank invariant given in Example \ref{ex:rank_invariant} can be viewed as the additive invariant
  $\rk:\ksp(\pfdfdrepk P)\rightarrow \Z^{\Seg(P)}.$
  Assume that every principal ideal in $(\Seg(P),\supseteq)$\footnote{We clarify that this partial order on $\Seg(P)$ is defined by $I \leq J$ if and only if $I \supseteq J$.}
  is finite (for example if $P$ is a finite poset)\footnote{We remark that, even if $P$ is infinite, every principal ideal in $(\Seg(P), \supseteq)$ can be finite. An extreme example is as follows: Let $P$ be an infinite set in which no pair of points is comparable. In this case, $(\Seg(P), \supseteq)$ is isomorphic to the poset $P$, and every principal ideal of $P$ is a singleton.}.
  Then, by Corollary \ref{cor:Mobius_does_not_affect_kernel}~\ref{item:Mobius_does_not_affect_kernel2}, the M\"obius inversion $\rk \ast \mu_{(\Seg(P),\supseteq)}$ is equivalent to $\rk$. This M\"obius inversion was considered in \cite{botnan2021signed}.
\end{example}

We can generalize this example as follows.
Consider the generalized rank invariant of Example \ref{ex:GRI}.
\begin{example} 
  \label{example:gpd}  
  Let $Q \subseteq \Int(P)$ such that the induced poset $(Q, \supseteq)$ is locally finite.
  Let $\rk^Q$ be the restriction of the generalized rank invariant to $Q$. When every principal ideal of $(Q,\supseteq)$ is finite,
  by Corollary \ref{cor:Mobius_does_not_affect_kernel}~\ref{item:Mobius_does_not_affect_kernel2},
  we obtain the additive invariant 
  \[
    \rk^Q\ast \mu_{(Q,\supseteq)}:=\dgm^Q:\ksp(\pfdfdrepk P)\rightarrow \Z^{Q},
  \]
  called the \emph{generalized persistence diagram} over $Q$ \cite{clause2022discriminating}
  and is equivalent to  $\rk^Q$.
  When $Q=\Int(P)$, we use $\dgm^\Int$ instead of $\dgm^{\Int(P)}$.
\end{example}

\begin{remark}
  \label{footnote}
  In the previous example,
  even if $(Q, \supseteq)$ does not have finite principal ideals, in certain settings a generalized notion of $\dgm^Q$ can be defined and shown to be equivalent to $\rk^Q$
    via a generalization of M\"obius inversion \cite[Definition~3.1]{clause2022discriminating}.\footnote{Similar ideas can also be found in \cite{gulen2022galois}.}
  For example, consider the following two sets of assumptions: 
  \begin{enumerate}[label=(\roman*)]
  \item  $P=\R$ and $\cD:=\pfdrep P$, and $Q$ is any nonempty subset of $\Int(P)$. 
  \item 
  $P=\R^2$,  $\cD:=\fprepk P$, and $Q$ is any nonempty subset of $\Int(P)$.
 \end{enumerate}
Under either of these two sets of assumptions, (the generalized) $\dgm^Q$ is a well-defined additive invariant on $\cD$.
 Its construction, however, requires a more delicate method than what is described in this section.
 For details, we refer the reader to \cite[Section 3]{clause2022discriminating}.
\end{remark}

\begin{example}\label{example:signed_interval_mult}
  Whenever the M\"obius inversion of the compression multiplicity invariant $c^\xi$ (cf. Example \ref{ex:compression_system}) over $(\Int(P),\supseteq)$ is well-defined,
  the M\"obius inversion $c^\xi\ast \mu_{(\Int(P),\supseteq)}$ is called the \emph{signed interval multiplicity \cite{asashiba2024interval} under $\xi$}\footnote{By the isomorphisms from Diagram (\ref{eq:kspdiagram}), the signed interval multiplicity one-to-one corresponds to the \emph{interval replacement} 
    \cite[Definition 4.2]{asashiba2024interval}.}.
  In fact, in \cite{asashiba2024interval}, $P$ is assumed to be finite, and thus the M\"obius inversion is well-defined therein.
\end{example}

In the next theorem, we see the following:
(i) On the collection of interval decomposable representations, the multiplicity invariant $1$ given in Example \ref{ex:trivial_invariant} coincides with the generalized persistence diagram, and (ii) 
 the discriminating power of the generalized rank invariant $\rk^Q$ increases, when it is taken with respect to larger $Q$.

\begin{theorem}[{\cite[Theorems E and F]{clause2022discriminating} \cite[Proposition 2.4]{botnan2021signed}}]\label{thm:dgm_generalize_the_barcode_and_completeness} Let $Q$ be any set of isoclasses of interval representations of any poset $P$. Let $\cC:=\add Q \subseteq \cD:=\pfdfdrep P$. 
\begin{enumerate}[label=(\roman*)]
\item \label{item:dgm_generalize_the_barcode}
For any $M\in \cC$,
\[
  \dgm^Q([M])\footnote{    
    We remark that the definition for the \emph{function} $\dgm^Q$ in Example~\ref{example:gpd} relies on finiteness conditions on $(Q,\supseteq)$.
    However, even in cases where we do not have those finiteness conditions,
    for $[M] \in \cC$, $\dgm^Q([M]) \in \mathbb{Z}^Q$ on the left-hand side is well-defined
    as an \emph{element} of $\mathbb{Z}^Q$     
    \cite[Definition~3.1~and~Theorem C (i)]{clause2022discriminating}.}
  =1([M]).
\]
\item \label{item:GRI_completeness} Assume $Q\subsetneq Q' \subseteq \ind(\cD)$ and let $\cC':=\add Q'$. 
Then, $\rk^Q$ is $\cC$-complete, but not $\cC'$-complete.
\end{enumerate}
\end{theorem}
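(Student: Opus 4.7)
The plan is to establish part (i) by an additive reduction to interval generators and a direct M\"obius computation, then deduce part (ii) from part (i). For part (i), since both $\dgm^Q$ and the multiplicity invariant $1$ are additive and each $M \in \cC = \add Q$ has a unique finite decomposition $M \cong \bigoplus_{I \in Q} k_I^{m_I}$, it suffices to verify $\dgm^Q([k_I])(J) = \delta_{I,J}$ for each pair $I, J \in Q$. I first compute $\rk^Q([k_I])(J) = \rank(k_I|_J)$: when $J \subseteq I$, $k_I|_J = k_J$ is the constant-$k$ functor, whose limit and colimit both equal $k$, so the rank is $1$; when $J \not\subseteq I$, any $x \in J \setminus I$ satisfies $k_I|_J(x) = 0$, so any $(v_y)_{y \in J} \in \lim(k_I|_J)$ has $v_x = 0$, forcing the canonical map $\lim(k_I|_J) \to \mathrm{colim}(k_I|_J)$ (whose value is the colimit class of any $v_y$) to vanish, giving rank $0$. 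The M\"obius inversion from Example~\ref{example:gpd} then yields
\[
\dgm^Q([k_I])(J)
= \sum_{\substack{J' \in Q\\ J' \supseteq J}} \rk^Q([k_I])(J')\,\mu_{(Q,\supseteq)}(J',J)
= \sum_{\substack{J' \in Q\\ J \subseteq J' \subseteq I}} \mu_{(Q,\supseteq)}(J',J)
= \delta_{I,J},
\]
where the last sum is empty when $J \not\subseteq I$, and otherwise ranges over the segment from $I$ to $J$ in $(Q,\supseteq)$, on which the defining M\"obius identity applies.

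For the $\cC$-completeness in part (ii), part (i) gives $\dgm^Q|_{\ksp(\cC)} = 1|_{\ksp(\cC)}$, which is the identity on the $\Z$-basis $Q$ of $\ksp(\cC)$ (Lemma~\ref{lemma:kspbasis}) and hence injective; Corollary~\ref{cor:Mobius_does_not_affect_kernel} then gives $\ker \rk^Q = \ker \dgm^Q$, so $\rk^Q$ is $\cC$-complete. A purely direct alternative: if $M = \bigoplus k_I^{m_I}$ and $N = \bigoplus k_I^{n_I}$ in $\cC$ satisfy $\rk^Q([M]) = \rk^Q([N])$ with $M \not\cong N$, then picking $I_0$ maximal under inclusion in the finite nonempty set $\{I \in Q : m_I \neq n_I\}$ and evaluating produces $(\rk^Q([M]) - \rk^Q([N]))(I_0) = m_{I_0} - n_{I_0} \neq 0$, contradicting the assumption.

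For non-$\cC'$-completeness, I pick any $L \in Q' \setminus Q$ (which exists as $Q \subsetneq Q'$) and exhibit a nonzero element of $\ker \rk^Q \cap \ksp(\cC')$. Setting $v := \dgm^Q([L]) \in \Z^Q$ and $[N] := \sum_{I \in Q} v_I\,[k_I] \in \ksp(\cC)$, part (i) with additivity yields $\dgm^Q([N]) = v = \dgm^Q([L])$, whence $\rk^Q([L] - [N]) = 0$ by Corollary~\ref{cor:Mobius_does_not_affect_kernel}. The element $[L] - [N]$ is nonzero in $\ksp(\cC') \cong \Z^{(Q')}$, because in the basis $Q'$ the coefficient of the basis vector $[L]$ is $+1$ in $[L]$ and $0$ in $[N]$ (using $L \notin Q$). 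The main obstacle is verifying that $v$ has finite support in $Q$ so that $[N]$ lies in $\ksp(\cC)$: when $(Q,\supseteq)$ is locally finite this follows from the explicit M\"obius formula together with pointwise finite-dimensionality of $L$, and in the broader settings of Remark~\ref{footnote} one invokes the generalized construction of \cite[Section~3]{clause2022discriminating} to supply an analogous interval-decomposable representative of $L$'s rank profile.
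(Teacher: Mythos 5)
The theorem is cited in the paper from \cite{clause2022discriminating} and \cite{botnan2021signed}; the paper gives no internal proof, so there is nothing to compare your approach against. Assessing your proof on its own merits, it has one verified correct segment and two real gaps.

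Your verified segment: the direct argument for $\cC$-completeness in part (ii) is correct and fully general. Since any $M\in\cC$ decomposes into finitely many interval summands, the ``difference set'' $\{I\in Q: m_I\neq n_I\}$ is finite; choosing $I_0$ maximal under inclusion in it and observing $\rk^Q([M])(I_0)-\rk^Q([N])(I_0)=\sum_{I\supseteq I_0}(m_I-n_I)=m_{I_0}-n_{I_0}\neq 0$ is exactly right, and this argument needs no finiteness hypothesis on $(Q,\supseteq)$. The auxiliary computation $\rank(k_I|_J)=1$ iff $J\subseteq I$ is also correct (the observation that the connecting-map factorisation identifies $[v_y]$ with $[v_x]=0$ in the colimit along any zig-zag in connected $J$ is the right reason).

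Gap 1 (part (i)): your M\"obius computation $\dgm^Q([k_I])(J)=\sum_{J'\in[I,J]_{(Q,\supseteq)}}\mu(J',J)=\delta_{I,J}$ is correct, but it only applies under the finiteness hypotheses of Example~\ref{example:gpd}. The footnote to the statement explicitly warns that those finiteness conditions need not hold, and that $\dgm^Q([M])$ for $[M]\in\ksp(\cC)$ is defined via the more delicate construction of \cite[Definition~3.1]{clause2022discriminating}, not via Equation~\eqref{eq:function_times_matrix}. Your argument therefore establishes (i) only when $(Q,\supseteq)$ is locally finite (and, to apply Corollary~\ref{cor:Mobius_does_not_affect_kernel}, with finite principal ideals); the general case claimed by the theorem is not covered. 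The same caveat invalidates your first route to $\cC$-completeness, since $\rk^Q\sim\dgm^Q$ via Corollary~\ref{cor:Mobius_does_not_affect_kernel} requires $\dgm^Q$ to be a well-defined additive invariant on all of $\ksp(\cD)$.

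Gap 2 (non-$\cC'$-completeness): you acknowledge this one yourself, but it is genuinely the crux. Setting $v:=\dgm^Q([L])$ for $L\in Q'\setminus Q$ and $[N]:=\sum_{I\in Q} v_I[k_I]$ requires both that $\dgm^Q([L])$ is well-defined for a possibly non-interval-decomposable $L$ (the footnote only guarantees well-definedness for $[M]\in\ksp(\cC)$) and that $v$ has finite support; neither is automatic, and even local finiteness of $(Q,\supseteq)$ does not ensure $\rk^Q([L])$ is convolvable if $L$ has unbounded support. Deferring to ``the generalized construction'' of the cited reference is precisely deferring to the external proof the statement cites; it does not constitute an independent argument. (A self-contained route would be to show directly that $\rk^Q(\ksp(\cC))=\rk^Q(\ksp(\cD))$, which together with your $\cC$-completeness is equivalent, via Proposition~\ref{proposition:fixescomplete}, to $\rk^Q$ being $\cC$-barcoding-equivalent; one could then invoke Theorem~\ref{theorem:nobigger}. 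But the missing ingredient is exactly the same finite-support claim.)
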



\subsection{Relative homological algebra and additive invariants}
\label{subsec:relhom_invariants}

In this section, we recall basic terminology of relative homological algebra 
from \cite{blanchette2024homological,asashiba2023approximation}
and relevant additive invariants.
Let $\cC \subseteq \cD$.
For a morphism $f\colon C \to M$ in $\cD$ with $C \in \cC$, 
$f$ is said be a \emph{$\cC$-cover} 
or a \emph{right minimal $\cC$-approximation}
of $M$ if 
the following two conditions hold.
\begin{enumerate}[label=(\roman*)]
  \item For any morphism $f': C' \to M$ with $C' \in \cC$, the diagram
  \[
    \begin{tikzcd}
      C' \ar[dr]{}{f'} \dar[dashed]{} & \\
      C  \rar{f} & M       
    \end{tikzcd}
  \]
  can be completed to be commutative,
  i.e., there exists a morphism $C'\rightarrow C$ in $\cC$ that makes the diagram commute. 
  \label{item:precovercondition}
\item The diagram
  \[
    \begin{tikzcd}
      C \ar[dr]{}{f} \dar[dashed]{} & \\
      C  \rar{f} & M       
    \end{tikzcd}
  \]
  can only be completed to commutativity by automorphisms of $C$.
  \label{item:rightminimalcondition}
\end{enumerate}
If $f$ satisfies Item \ref{item:precovercondition} but possibly not Item \ref{item:rightminimalcondition},
then $f$ is said to be a \emph{$\cC$-precover}, or
a \emph{$\cC$-approximation} of $M$.

As before, let $\cC \subseteq \cD$ (cf.~Convention~\ref{convention:subcat}) and 
for simplicity assume that (i) $\cD$ is an abelian category, and (ii) 
for any $M \in \cD$, an epimorphic $\cC$-cover of $M$ exists.
For example, these conditions are satisfied for 
$P$ a finite poset, $\cD = \pfdrepk P$, and any $\cC$ containing all the indecomposable projectives
(see for example \cite[Remark~4.2 and Lemma~4.3]{blanchette_exact_2023}) and \cite[Remark~3.2]{asashiba2023approximation})\footnote{Note that in general, for a non-finite poset $P$, it is possible that $\cC$-(pre)covers $f: C \rightarrow M$ may not exist.}.
Instead of the above assumptions, one can work with exact structures,
in particular the exact structure $\mathcal{F}_\cC$ induced by $\cC$ and
under the assumption of ``enough projectives''.
See \cite{blanchette_exact_2023,buhler2010exact} for more details.
Let $\Omega^0(M) := M$ and define $\Omega^i(M)$ for $i = 1,2,\hdots$ recursively as follows:
Let $f_i$ be a $\cC$-cover $f_i \colon J_i \to \Omega^{i}(M)$,
inducing a short exact sequence
\[
  0 \longrightarrow \ker f_i \overset{\iota_i}{\longrightarrow} J_i \overset{f_i}{\longrightarrow} \Omega_i(M) \longrightarrow 0, 
\]
from which we let $\Omega^{i+1}(M) := \ker f_i$.
Then, we obtain the long exact sequence
\[
  \hdots \longrightarrow J_m \overset{g_m}{\longrightarrow}
    \cdots \overset{g_2}{\longrightarrow} J_1\overset{g_1}{\longrightarrow} J_0 \overset{f_0}{\longrightarrow} M \longrightarrow 0,
\]
where
$g_i := \iota_{i-1} \circ f_i$ for each $i=1,2,\hdots$,
called a \emph{minimal $\cC$-resolution} of $M$.
If 
there exists $m\in \N$ such that $J_m\neq 0$ and $J_\ell=0$ for $\ell>m$,
then we say that the \emph{$\cC$-dimension} of $M$ is $m$.
If such $m\in \N$ does not exist, then we say that the $\cC$-dimension of $M$ is infinity.
Equivalently, the $\cC$-dimension of $M$ can be defined as
the infimum of the length of (not necessarily minimal) $\cC$-resolutions of $M$;
see, e.g., \cite[Proposition 3.9]{asashiba2023approximation}.
Finally,
the \emph{global $\cC$-dimension} (or \emph{($\cC$-)relative global dimension}) of $\cD$ 
is defined to be the supremum of the
$\cC$-dimensions of all $M \in \cD$.

For example, for a finite poset $P$, $\cD:=\pfdrepk P$,
$Q$ the set of isoclasses of all projective representations of $P$, and $\cC := \add Q$,
the
$\cC$-cover,
minimal $\cC$-resolution,
$\cC$-dimension,
and
global $\cC$-dimension
of $\cD$
correspond to the usual concepts of
projective cover,
minimal projective resolution,
projective dimension,
and global dimension of $\pfdrepk P$

In general, extra care needs to be taken for infinite posets.
Below, we discuss the particular case of $P = \mathbb{R}^d$, where we first note that
the category $\fprepk \R^d$ has been noted to be analogous to
the category of finitely generated \(\mathbb{N}^d\)-graded modules
over the \(\mathbb{N}^d\)-graded polynomial ring $k[x_1,\ldots,x_d]$
(see \cite[Theorem 1]{carlsson2009theory}, \cite[Section~2]{oudot2024stability}, \cite{lesnick2015interactive})).
For a general discussion on multigraded modules,
including the claims made in the next example, see \cite{miller2005combinatorial}.
See \cite{geist2023global,miller2020homological}
for more on the homological algebra of modules over $P = \mathbb{R}^d$.

\begin{example}
  \label{ex:multigraded_Betti}
  A representation $F\in \fprepk \R^d$ is projective\footnote{In the graded polynomial ring setting, ``free''.}
  if $F$ is isomorphic to a direct sum  $\bigoplus_{i=1}^n k_{p_i^\uparrow}$
  for some ${p_i}\in \R^d$, $i=1,\ldots,n$.
  Given any $M\in \fprepk \R^d$, 
  there exists a \emph{minimal projective resolution} of length at most $d$ 
  \[
    0 \xrightarrow{} F_d \xrightarrow{} \cdots\xrightarrow{} F_1\xrightarrow{} F_0\xrightarrow{} M \xrightarrow{} 0,
  \] 
  which is unique up to isomorphism.
  For $i=0,\ldots,d$, the \emph{$i$-th graded Betti number} $\beta_i(M)$
  is the map $\R^d\rightarrow \Z$ that sends $p\in \R^d$
  to the number of indecomposable summands of $F_i$ that are isomorphic to $k_{p^\uparrow}$.
  As $\beta_i$ is additive, for $\cD=\fprepk \R^d$,
  we obtain the corresponding additive invariant $\beta_i:\ksp(\cD)\rightarrow \Z^{(\R^d)}$.  
\end{example}

Given any poset $P$, let $P\cup\{\infty\}$ be the extension of $P$ with $a<\infty$ for all $a\in P$. For any $a,b\in P\cup\{\infty\}$ with $a<b$, let $\langle a,b \langle:=\{p\in P: a\leq p\not\geq  b\}$, called a \emph{hook} in $P$. Note that, when $b=\infty$, we have $\langle a,b \langle= a^\uparrow$. Let $\mathrm{Hook}(P)$ be the set of all hooks in $P$, which is a subset of $\Int(P)$.
\begin{example}[Rank-exact resolutions {\cite{botnan2021signed}}] 
Let $P$ be any upper-semilattice.\footnote{$P$ is said to be an \emph{upper semi-lattice} if, for every pair of points in $P$, their join exists in $P$.} Given any $M\in \fprepk P =: \cD$, 
there exists a minimal resolution
of finite length
\[0 \xrightarrow{} H_n \xrightarrow{} \cdots\xrightarrow{} H_1\xrightarrow{} H_0 \xrightarrow{} M \xrightarrow{} 0\] 
such that each $H_i$ is isomorphic to a finite direct sum $\bigoplus_{j=1}^n k_{\langle a_j,b_j\langle}$. This resolution is called a \emph{minimal rank-exact resolution}, which is unique up to isomorphism. Similar to Example \ref{ex:multigraded_Betti}, for $i=0,1,\ldots$, we obtain the \emph{$i$-th rank-exact-Betti numbers} 
$\beta_i^{\rk}:\ksp(\cD)\rightarrow \Z^{(\mathrm{Hook}(P))}$, which are additive invariants on $\cD$.
The alternating sum $\sum_i (-1)^i\beta_i^{\rk}:\ksp(\cD)\rightarrow \Z^{(\mathrm{Hook}(P))}$ is also an additive invariant, which is called the \emph{minimal rank decomposition  using hooks}. As the name indicates, given the above exact sequence, the rank invariant $\rk_M$ (cf. Example~\ref{ex:rank_invariant}) coincides with $\sum_{i}(-1)^i\rk_{H_i}$.
\end{example}

\begin{framed}
In the rest of this section, we assume that $P$ is a finite poset
(therefore, $\fpint(P)=\Int(P)$ and $\fprepk P = \pfdfdrep P=\pfdrep P $). Also, we assume that $\cC := \add Q$
for some $Q \subseteq \ind(\pfdrepk P)$
such that
(1) $Q$ is finite, and 
(2) $Q$ contains all the isoclasses of indecomposable projective representations.
\end{framed}
In particular, Assumption (2) implies that every $M \in \pfdrepk P$ admits a surjective $\cC$-cover.

For example, let 
$\cC = \add Q$ where $Q$ is the set of isoclasses of all 
interval representations in $P$.
In this setting, we call a minimal $\cC$-resolution a \emph{minimal interval resolution}, 
the global $\cC$-dimension of $\pfdrepk P$ the
\emph{global interval dimension} 
of $P$, and a $\cC$-cover an \emph{interval cover.}

\begin{proposition}[{\cite[Proposition 4.5]{asashiba2023approximation}}]
  \label{prop:finite_interval_dimension}  
  The global interval dimension of a finite poset is finite.  
\end{proposition}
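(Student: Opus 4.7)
The plan is to reduce the statement to the well-known finiteness of the global projective dimension of $\pfdrepk P$ when $P$ is finite. The key conceptual observation is that, for finite $P$, every indecomposable projective in $\pfdrepk P$ is already an interval representation, so projective resolutions automatically serve as $\cC$-resolutions.

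First I would observe that since $P$ is finite, for each $x \in P$ the principal upset $x^\uparrow$ is an interval: it has minimum $x$ and is therefore connected, and convexity is immediate. Hence each indecomposable projective $k_{x^\uparrow}$ is an interval representation and lies in $Q$, so every finitely generated projective representation of $P$ belongs to $\cC = \add Q$. Consequently, any (minimal) projective resolution of $M \in \pfdrepk P$ is in particular a $\cC$-resolution of $M$, giving the pointwise bound that the $\cC$-dimension of $M$ is at most the projective dimension of $M$.

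Next I would invoke the classical fact that the incidence algebra of a finite poset---equivalently, $\pfdrepk P$ itself---has finite global projective dimension. A self-contained argument can proceed by induction on $|P|$: for an element $x \in P$, the simple representation $k_{\{x\}}$ has projective cover $k_{x^\uparrow} \twoheadrightarrow k_{\{x\}}$ whose kernel is supported on the strictly smaller subposet $x^\uparrow \setminus \{x\}$, so by the inductive hypothesis its projective dimension is finite; taking the maximum over $x \in P$ bounds the projective dimension of every simple, and an additional induction on $\dim_{\bfield} M$ via composition series yields a uniform bound on the projective dimension of every $M \in \pfdrepk P$, depending only on $P$.

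Combining these two observations, the global interval dimension of $P$ is at most the global projective dimension of $\pfdrepk P$, which is finite. The main obstacle I anticipate is the second step---a clean self-contained induction on $|P|$ for the finiteness of the global projective dimension---but since this is a well-documented fact for incidence algebras of finite posets, one can alternatively cite it directly; the essential conceptual content is the first-step observation that projectives are intervals, which is what specifically makes the $\cC$-dimension controllable here.
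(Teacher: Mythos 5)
The paper does not itself prove this proposition---it is cited from [asashiba2023approximation, Proposition 4.5], where a quite different argument is used. More importantly, your proposed reduction contains a genuine gap.

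You assert that because every projective representation lies in $\cC = \add Q$, any projective resolution is automatically a $\cC$-resolution, and hence the interval dimension of $M$ is bounded by its projective dimension. This conflates two distinct notions: an exact sequence whose terms merely happen to lie in $\cC$, versus a $\cC$-resolution in the relative homological sense actually used here---namely, one built from $\cC$-(pre)covers, or equivalently one that remains exact after applying $\Hom(k_I,-)$ for every interval $I$. A projective cover need not be a $\cC$-precover once $\cC$ contains non-projective objects. A minimal counterexample to your key step: let $P=\{1<2<3\}$ and $M=k_{\{1,2\}}$. The minimal projective resolution is
\[
0 \to k_{\{3\}} \to k_{\{1,2,3\}} \to k_{\{1,2\}} \to 0,
\]
and all terms are intervals. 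However, $\Hom(k_{\{1,2\}},k_{\{1,2,3\}})=0$ (any candidate map must vanish at vertex $2$ by commutativity with $2 \to 3$, hence also at $1$), while $\Hom(k_{\{1,2\}},k_{\{1,2\}})=k$. So the epimorphism $k_{\{1,2,3\}}\to k_{\{1,2\}}$ is not a $\cC$-precover and the displayed complex is \emph{not} a $\cC$-resolution. Your claimed inequality ``interval dimension $\leq$ projective dimension'' therefore does not follow from this mechanism, and it is in fact false in general: the interval global dimension of the commutative ladders $\{1<2\}\times\{1<\cdots<n\}$ grows without bound as $n\to\infty$, even though the global projective dimension of their incidence algebras remains $2$. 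Any correct proof must work directly inside the relative $\cC$-exact structure rather than reducing to projective dimension.
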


Proposition~\ref{prop:finite_interval_dimension} implies that for any finite poset $P$,
any $M\in \pfdrepk P$ admits a minimal interval resolution of finite length $\ell$
(where $\ell$ is at most the global interval dimension of $P$):
\begin{equation}
  \label{eq:intervalresolution}
  0 \xrightarrow{} I_\ell \xrightarrow{} \cdots\xrightarrow{} I_1\xrightarrow{} I_0\xrightarrow{} M \xrightarrow{} 0,
\end{equation}
in which each $I_i$ is a direct sum of interval representations.
The integer $\ell$ is called the \emph{interval dimension} of $M$.

\begin{example}\label{ex:interval_euler}  For each $i\in \N\cup\{0\}$,
  the \emph{$i$-th interval Betti number} of $M$, denoted $\beta_i^{\Int}(M)$,
  is the map $\Int(P) \rightarrow \Z$ sending each $J\in \Int(P)$
  to the number of summands of $I_i$ that are isomorphic to $J$.
  We obtain the corresponding additive invariant
  \[
    \beta_i^\Int:\ksp(\pfdrepk P)\rightarrow \Z^{\Int(P)}. 
  \]
  Furthermore, by taking the alternating sum,
  we define the \emph{interval Euler characteristic}
  \[
    \res := \sum_{i=0}^{\infty} (-1)^i \beta_i^\Int : \ksp(\pfdrepk P)\rightarrow \Z^{\Int(P)}.
  \]
  By Proposition \ref{prop:finite_interval_dimension},
  the above infinite sum includes only finitely many nonzero terms.
\end{example}

The interval Euler characteristic of $M$ can also be viewed
as an element of the \emph{relative Grothendieck group of the incidence algebra of $P$, relative to interval representations} (see \cite{blanchette2024homological}).

This perspective, along with results presented in \cite{asashiba2023approximation}, leads to:
\begin{proposition}
  \label{proposition:dimhomandres}
  Let $\cD:=\pfdrep P$.
  Then, the dim-hom invariant $\dimhom_\cD^\Int$ is equivalent to the interval Euler characteristic $\res$.
\end{proposition}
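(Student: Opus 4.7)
The plan is to exhibit a commutative triangle $\dimhom_\cD^\Int = \Phi \circ \res$ in which $\Phi$ is an invertible $\mathbb{Z}$-linear endomorphism of $\mathbb{Z}^{\Int(P)}$; the equivalence then follows from Lemma~\ref{lemma:invariantrelations}\ref{lemma:invariantrelations:num:sim}.

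For the formula, I would fix $M \in \cD$ and take a minimal interval resolution
\[
  0 \longrightarrow I_\ell \longrightarrow \cdots \longrightarrow I_1 \longrightarrow I_0 \longrightarrow M \longrightarrow 0
\]
furnished by Proposition~\ref{prop:finite_interval_dimension}, with $I_i \cong \bigoplus_{J \in \Int(P)} k_J^{\beta_i^\Int(M)(J)}$. For each $L \in \Int(P)$, applying the functor $\Hom(k_L, -)$ yields an exact sequence of $k$-vector spaces: since $k_L \in \cC := \add \Int(P)$, each $\cC$-cover in the resolution is by definition a $\cC$-precover and therefore induces a surjection at the $\Hom$-level, while left-exactness of $\Hom(k_L, -)$ handles the remaining terms. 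Taking the alternating sum of dimensions produces the key identity
\[
  \dim_k \Hom(k_L, M) \;=\; \sum_{J \in \Int(P)} \res([M])(J)\cdot \dim_k \Hom(k_L, k_J).
\]
Defining $\Phi: \mathbb{Z}^{\Int(P)} \to \mathbb{Z}^{\Int(P)}$ by $(\Phi v)(L) := \sum_J v(J)\cdot \dim_k \Hom(k_L, k_J)$, this reads $\dimhom_\cD^\Int = \Phi \circ \res$, and Lemma~\ref{lemma:invariantrelations}\ref{lemma:invariantrelations:num:equiv} immediately gives $\res \gtrsim \dimhom_\cD^\Int$.

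For the converse, it remains to show that $\Phi$ is invertible. The matrix $A_{L,J} := \dim_k \Hom(k_L, k_J)$ has unit diagonal, because $k_L$ is indecomposable in the Krull--Schmidt category $\pfdfdrepk P$ with local endomorphism ring equal to $k$ (connectedness of $L$ yields $\Hom(k_L, k_L) \cong k$). The cleanest route to invertibility is to invoke the framework of \cite{blanchette2024homological, asashiba2023approximation}: both $\res$ and $\dimhom_\cD^\Int$ descend through the relative Grothendieck group $K_0(\cD, \mathcal{F}_\cC)$ to $\mathbb{Z}^{\Int(P)}$---$\res$ via the interval resolution, which identifies $\ksp(\cC) \xrightarrow{\sim} K_0(\cD, \mathcal{F}_\cC)$, and $\dimhom_\cD^\Int$ because each $\Hom(k_L, -)$ is exact on $\mathcal{F}_\cC$-conflations---and a comparison of the two factorizations identifies $\Phi$ with an isomorphism of abelian groups. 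Once this is done, Lemma~\ref{lemma:invariantrelations}\ref{lemma:invariantrelations:num:sim} concludes that $\res \sim \dimhom_\cD^\Int$.

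The main obstacle is precisely the invertibility of $\Phi$. A direct combinatorial attack would place the nonzero entries of $A$ on one side of the diagonal by exploiting that any nonzero $\eta: k_L \to k_J$ has a support $S_\eta \subseteq L \cap J$ which is down-closed in $L$ and up-closed in $J$; this imposes a directional constraint on the pair $(L, J)$ and, together with $\Hom(k_L, k_L) \cong k$, rules out simultaneous non-vanishing of $\Hom(k_L, k_J)$ and $\Hom(k_J, k_L)$ for distinct $L, J$, yielding a unitriangular normal form for $A$. However, ruling out longer ``Hom-cycles'' to obtain a bona fide topological sort requires genuine case analysis, so the Grothendieck-group argument above is the more transparent option.
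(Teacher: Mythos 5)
Your proposal is correct and ultimately takes the same route as the paper: both outsource the decisive step to the relative Grothendieck group framework of \cite{blanchette2024homological}, identifying each of $\res$ and $\dimhom_\cD^\Int$ with the canonical quotient onto $K_0(\cD,\mathcal{F}_\cC)\cong\ksp(\cC)$ (essentially \cite[Theorem~4.22]{blanchette2024homological}, which is what the paper cites). The alternating-sum formula $\dimhom_\cD^\Int=\Phi\circ\res$ that you derive by applying $\Hom(k_L,-)$ along a minimal interval resolution is a correct and illuminating unpacking of the easy inclusion $\ker\res\subseteq\ker\dimhom_\cD^\Int$, but it does not replace the cited theorem for the converse---the invertibility of $\Phi$---which is where you too delegate to that framework. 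Your combinatorial sketch of unitriangularity for the Hom-matrix is, as you acknowledge, unfinished: the antisymmetry argument (a nonzero composite $k_L\to k_J\to k_L$ lands in $\Hom(k_L,k_L)\cong k$ and must therefore be an isomorphism, forcing $L=J$) rules out $2$-cycles but not longer Hom-cycles, so an actual acyclicity argument, for instance an induction on interval size, would still be needed to make that alternative route self-contained.
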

Although this proposition is already given in \cite[Remark 6.6]{blanchette2024homological}
and \cite{asashiba2023approximation}, we provide a proof below.
\begin{proof}
  Let $\cC:=\add \Int(P)$.
  Proposition \ref{prop:finite_interval_dimension} together with \cite[Proposition 4.9]{blanchette2024homological}
  guarantees that the interval Euler characteristic $\res$ is
  the canonical quotient map from $\ksp(\cD)$ to $\Z^{\Int(P)}$,
  with $\Z^{\Int(P)}$ isomorphic to the Grothendieck group  relative to $\Int(P)$
  (see \cite[Definition 4.7]{blanchette2024homological} for the precise definition of the relative Grothendieck group).
  Now, \cite[Theorem 4.22]{blanchette2024homological} directly implies that  $\dimhom_\cD^\Int$ and $\res$ are equivalent.  
\end{proof}

Not only the set $\Int(P)$ of intervals, for $\cC := \add Q$ with
  $Q \subseteq \ind(\pfdrepk P)$ satisfying Assumptions (1) and (2) in the box above, 
  the \emph{$i$-th $Q$ Betti number} $\beta_i^Q$ can be similarly defined.
  If the $\cC$-dimension of each $M \in \cD$ is finite, the alternating sum
  $\chi^Q := \sum_{i=0}^\infty (-1)^i \beta_i^Q$ can be defined.
  An invariant equivalent to $\chi^Q$ is called a \emph{homological invariant relative to $Q$}
  \cite[Definition 4.12]{blanchette2024homological}.
  Theorem 4.22 of \cite{blanchette2024homological} states that if the global $\cC$-dimension is finite, then
  $\dimhom_\cD^Q$ and $\chi^Q$ are equivalent.


\section{Barcoding invariants and their comparison}
\label{sec:resultscomparison}

The critical property of the
generalized persistence diagram or interval replacement
(Examples \ref{example:gpd} and \ref{example:signed_interval_mult})
is that they serve as invariants of poset representations in terms of interval representations.
This naturally generalizes the notion of a barcode for one-parameter persistence modules. 
In this section, we abstract and generalize the concept of the generalized persistence diagram or interval replacement,
leading to the notion of a \emph{barcoding invariant}.
The key idea is that, given \( \cC \subseteq \cD \) (cf. Convention \ref{convention:essentially_small_Krull-Schmidt}),
a \( \cC \)-barcoding invariant for objects in \( \cD \) is a homomorphism \( f: \ksp(\cD) \rightarrow \ksp(\cC) \) that acts as the identity 
on the subdomain \( \ksp(\cC)\subseteq \ksp(\cD) \) (cf. Diagram (\ref{eq:kspdiagram})).
In other words, through $f$, any object in $\cD$ is \emph{described} or \emph{approximated} by a
formal $\Z$-linear combination\footnote{Note that by definition, a linear combination is finite.} of objects in $\cC$, while keeping the objects in $\cC$ unchanged.

The goals of this section are to reveal the structure
of the collection of $\cC$-barcoding invariants
under the partial order $\gtrsim$ from Definition \ref{def:comparison_of_invariants}, 
and to compare the discriminating power of $\cC$-barcoding invariants. Namely, we show that the poset of $\cC$-barcoding invariants ordered by $\gtrsim$ does not contain any pair of invariants where one strictly refines the other (Theorem \ref{theorem:comparing} and Corollary \ref{corollary:comparingbarcodelike}), and in fact,
all the $\cC$-barcoding invariants have 
isomorphic kernels via what we call a transfer isomorphism
(Theorem \ref{theorem:isokernel}).
In addition, we generalize some prior results on the discriminative power of invariants for
poset representations within our unified framework
(Proposition \ref{proposition:applyequivalent} and Theorem \ref{theorem:nobigger}).

\subsection{Barcoding invariants: definitions and examples}

\begin{definition}
  \label{definition:completefixes}  
  Let $\cC \subseteq \cD$ (cf.~Convention~\ref{convention:subcat})
  and $G$ be an abelian group.
  \begin{enumerate}[label=(\roman*)]
  \item    
    An additive invariant 
    $f: \ksp(\cD) \rightarrow \ksp(\cC)$    
    is said to be \emph{$\cC$-barcoding} if $f(c) = c$
    for any $c \in \ksp(\cC)$\footnote{In this case,     
    each element of $\ksp(\cC)$ is a fixed point of $f$. We simply say that $f$ fixes elements of $\ksp(\cC)$.}. 
    \label{definition:numfixes} 
  \item An additive invariant 
    $f: \ksp(\cD) \rightarrow G $    
    is said to be \emph{$\cC$-barcoding-equivalent}
    if $f$ is equivalent to a $\cC$-barcoding invariant.
  \end{enumerate}
  We refer to a $\cC$-barcoding(-equivalent) invariant simply as
  a \emph{barcoding(-equivalent) invariant}
  whenever the subcategory $\cC \subseteq \cD$ is clear from context.
\end{definition}
We remark that given any $\cC$-barcoding-equivalent invariant $f$ on $\cD$,
Lemma~\ref{lemma:invariantrelations}\ref{lemma:invariantrelations:num:sim} implies
that $\ima f \cong \ksp(\cC)$.
Furthermore, for such an invariant $f$, Lemma~\ref{lemma:kspbasis} shows that
$\ind(\cC)$ is a basis for $\ksp(\cC)$ and thus for $\ima f$ under the previous isomorphism.
We abuse the language and simply say that $\ind(C)$ is a \emph{basis} for the
$\cC$-barcoding-equivalent invariant $f$.\footnote{In fact, the notion of \emph{basis} has been defined for invariants in general
  (see \cite[Definition~4.11]{amiot2024invariants}). The central property for barcoding invariants is that
  they must be the identity on the basis. See Remark~\ref{rem:basis} for a more detailed discussion.}

The notions of the generalized persistence diagram and interval replacement can be abstracted as follows:
\begin{definition}[Interval-barcoding invariant]
  \label{definition:barcodelike}    
  Let $\cD:=\pfdfdrep P$ (resp. $\cD := \fprepk P$)
  and $Q$ be the set of isoclasses of all interval representations in $\cD$, i.e.\ $\Int(P)$ (resp. $\fpint(P)$) respectively.
  Let $\cC := \add Q \subseteq \cD$.
  A $\cC$-barcoding invariant 
  $\ksp(\cD)\rightarrow \ksp(\cC)$   
  is said to be \emph{interval-barcoding}.  
  Also, any $\cC$-barcoding-equivalent invariant $\ksp(\cD)\rightarrow G$ (where $G$ is an abelian group) is said to be \emph{interval-barcoding-equivalent}.  
\end{definition}

We provide a list of interval-barcoding or interval-barcoding-equivalent invariants: 

\begin{example}
  \label{example:compressionbasedinvariants} \label{example:resolutionbasedinvariants}
  Let $P$ be a finite poset. Then:
  \begin{enumerate}[label=(\roman*)]
  \item The interval multiplicity map $\pi^\Int$ (cf.\ Example~\ref{ex:restricted_multiplicity})
    is interval-barcoding.\label{ex:intmult_b}
  \item The generalized rank invariant $\rk$ (cf. Example \ref{ex:GRI}) is interval-barcoding-equivalent.\label{ex:gri_be}
  \item For \emph{any} compression system  $\xi$,
    the compression multiplicity invariant $c^\xi$ (cf.\ Example~\ref{ex:compression_system}) is interval-barcoding-equivalent.
    \label{ex:cm_be}

  \item The generalized persistence diagram (cf. Example \ref{example:gpd}) is interval-barcoding.
    \label{ex:gpd_b}
  \end{enumerate}
  For the invariants defined via relative homological algebra, we have:
  \begin{enumerate}[label=(\roman*),resume]
  \item The $0$th interval Betti number $\beta_0^\Int$ (cf. Example \ref{ex:interval_euler}) is interval-barcoding.
    \label{ex:0betti_b}
  \item The interval Euler characteristic $\res$ (cf. Example \ref{ex:interval_euler}) is interval-barcoding.
    \label{ex:euler_b}
  \item The dim-hom invariant $\dimhom_\cD^\Int$ (cf. Example \ref{ex:dim-hom}) is interval-barcoding-equivalent.
    \label{ex:dimhom_be}
  \end{enumerate}
  In the end of Section \ref{sec:applications},
  we discuss some interval-barcoding invariants in the setting where $P$ is infinite.
\end{example}

Item \ref{ex:intmult_b} follows by definition.
Items \ref{ex:cm_be}, \ref{ex:gri_be}, and \ref{ex:gpd_b} directly follow
from 
\cite[Corollary~3.21]{asashiba2024interval},
Corollary~\ref{cor:Mobius_does_not_affect_kernel}~\ref{item:Mobius_does_not_affect_kernel2} and
Theorem~\ref{thm:dgm_generalize_the_barcode_and_completeness}~\ref{item:dgm_generalize_the_barcode}.
Items \ref{ex:0betti_b} and \ref{ex:euler_b}
follow from the fact that for
an interval-decomposable $M$, its minimal interval resolution is given by:
\[
  0 \rightarrow M \rightarrow M \rightarrow 0.
\]
Item \ref{ex:dimhom_be} follows from the fact that
$\res \sim \dimhom_\cD^\Int$ (cf. Proposition~\ref{proposition:dimhomandres}).

Below, we explore the properties of additive invariants
satisfying one of the conditions in Definition~\ref{definition:completefixes}.
However, we note that these properties can be stated in terms of
general properties of homomorphisms between abelian groups.
To highlight this fact,
we provide the statements in general terms in addition to
the main statements in terms of additive invariants.
While these are elementary results from the point of view of abelian groups,
we found their implications for additive invariants of
persistence modules to be surprising and counter-intuitive.

For any two abelian groups $G$ and $H$, by $G\subseteq H$, we mean that $G$ is a subgroup of $H$.
%
%
We have the following:
\begin{lemma}
  \label{lemma:ghomfixescomplete}
  Let $f: H \rightarrow G'$ be a homomorphism of abelian groups,
  and let $G \subseteq H$.  
  The following are equivalent.
  \begin{enumerate}[label=(\roman*)]
  \item There exists a homomorphism
    $f': H \rightarrow G$ with $\ker f = \ker f'$, 
    such that $f'(g) = g$ for each $g \in G$.\label{item:ghofixescomplete1}
    
  \item 
    $f|_G$ is injective, and 
    $\{f(x) \mid {x \in J} \}$ is a generating set for $\ima f$
    for some (in fact, any) generating set $J \subset G$ of $G$.\label{item:ghofixescomplete2}

  \item 
    $f|_G$ is injective, and 
    $f(G) = \ima f$.\label{item:ghofixescomplete3}
  \end{enumerate}
\end{lemma}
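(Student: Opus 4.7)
The plan is to verify the cyclic chain (iii) $\Leftrightarrow$ (ii), (iii) $\Rightarrow$ (i), (i) $\Rightarrow$ (iii). The main technical input will be Remark~\ref{remark:ghomrelations}(ii), which upgrades the kernel-equality hypothesis in (i) to an explicit isomorphism $\phi: \ima f \to \ima f'$ with $f' = \phi \circ f$.

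The equivalence of (ii) and (iii) is elementary. Assuming (iii), for any generating set $J \subseteq G$ of $G$, every element of $G$ is a finite $\Z$-linear combination of elements of $J$, so every element of $f(G)$ is a $\Z$-linear combination of $\{f(x)\}_{x \in J}$; since $f(G) = \ima f$, this already gives the ``for any'' strengthening of (ii). Conversely, assuming (ii) for some generating set $J \subseteq G$, every element of $\ima f$ is a $\Z$-linear combination of elements of $f(J) \subseteq f(G)$, so $\ima f \subseteq f(G)$; together with the obvious reverse inclusion, this yields $f(G) = \ima f$.

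For (iii) $\Rightarrow$ (i), I would construct $f'$ directly. Injectivity of $f|_G$ together with $f(G) = \ima f$ means that $f|_G$ is an isomorphism $G \to \ima f$; let $\psi := (f|_G)^{-1}: \ima f \to G$ and set $f' := \psi \circ f: H \to G$. Because $\psi$ is injective we have $\ker f' = \ker f$, and for $g \in G$ we have $f'(g) = \psi(f(g)) = g$ by construction of $\psi$.

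For (i) $\Rightarrow$ (iii), injectivity of $f|_G$ is immediate, since any $g \in G \cap \ker f = G \cap \ker f'$ satisfies $g = f'(g) = 0$. To prove $f(G) = \ima f$, I would apply Remark~\ref{remark:ghomrelations}(ii) to the equality $\ker f = \ker f'$, obtaining an isomorphism $\phi: \ima f \to \ima f'$ with $f' = \phi \circ f$. Because $f'$ fixes $G$, we have $\ima f' = G$, so $\phi: \ima f \xrightarrow{\cong} G$; moreover $\phi(f(g)) = f'(g) = g$ for every $g \in G$, so $\phi$ restricts to a bijection $f(G) \to G$. Since $\phi$ is already a bijection from $\ima f$ onto $G$, this forces $f(G) = \ima f$. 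I do not foresee a serious obstacle; the only step requiring a bit of care is this last one, where one must carefully track how $\phi$ identifies $f(G)$ with $G$ through the inclusion $G \subseteq H$.
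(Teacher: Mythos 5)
Your proof is correct and takes essentially the same approach as the paper. The paper closes the cycle via (ii) $\Rightarrow$ (iii), (i) $\Rightarrow$ (ii), (iii) $\Rightarrow$ (i), while you prove (ii) $\Leftrightarrow$ (iii), (iii) $\Rightarrow$ (i), (i) $\Rightarrow$ (iii); the same two ingredients (Remark~\ref{remark:ghomrelations}\ref{remark:ghomrelations:num:sim} and the explicit construction $f' = (f|_G)^{-1} \circ f$) carry both proofs, and the small difference (arguing $f(G) = \ima f$ directly from the bijectivity of $\phi$ rather than through a generating set) is cosmetic.
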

\begin{proof}
  The implication \ref{item:ghofixescomplete2} $\implies$ \ref{item:ghofixescomplete3} is clear.
  We prove \ref{item:ghofixescomplete1}$\implies$\ref{item:ghofixescomplete2}
  and
  \ref{item:ghofixescomplete3}$\implies$\ref{item:ghofixescomplete1}.
  
  \ref{item:ghofixescomplete1}$\implies$\ref{item:ghofixescomplete2}:
  We first show that 
  $f|_G$ is injective.
  Let $f':H\rightarrow G$ be such that $\ker f = \ker f'$, and 
  $f'(g) = g$ for each $g \in G$.
  By Remark~\ref{remark:ghomrelations}\ref{remark:ghomrelations:num:sim},
  there exists an isomorphism $\phi: \ima f \rightarrow \ima f'$ such that
  $f' = \phi f$.
  Let $\iota: G \subseteq H$ be the inclusion map. Then, we have
  \[
    \ker (f|_G) = \ker (f\iota) = \ker (\phi f\iota) = \ker (f'\iota) = \{0\},
  \]
  where the last equality follows from the fact that $f'(g) = g$ for any $g \in G$
  (i.e.\ $f'|_G = f'\iota$ is the identity map). Therefore, $f|_G$ is injective.

  Let $J \subset G$ be any generating set for $G$.
  Then, for each $z \in \ima f$, we can write $\phi(z)\in \ima f' = G$ as a
  $\mathbb{Z}$-linear combination of elements from $J$.
  Since each $x\in J$ satisfies that $x=f'(x)=\phi(f(x))$,
  by applying the inverse $\phi^{-1}$ to the $\Z$-linear combination,
  we see that $z$ equals a
  $\mathbb{Z}$-linear combination of elements from
  $\{f(x) \mid x \in J \}$, as desired.

  \ref{item:ghofixescomplete3}$\implies$\ref{item:ghofixescomplete1}:
  By assumption, the restriction of $f$ to $G$ is an isomorphism $G \rightarrow \ima f$.
  Let $\phi:\ima f\rightarrow G$ be the inverse of this isomorphism.
  Then, the map $f':=\phi f$ satisfies the condition described in the statement, as desired.
\end{proof}

Lemma~\ref{lemma:ghomfixescomplete} specializes to the following,
which relates the concepts of $\cC$-barcoding-equivalent invariants and $\cC$-complete invariants (Definition~\ref{definition:complete}):
\begin{proposition}
  \label{proposition:fixescomplete}
  Let $\cC \subseteq \cD$ (cf. Convention \ref{convention:essentially_small_Krull-Schmidt}),
  and let $f$ be any additive invariant on $\cD$.
  The following are equivalent.
  \begin{enumerate}[label=(\roman*)]
  \item $f$ is $\cC$-barcoding-equivalent. \label{item:fixescomplete1}
  \item $f$ is $\cC$-complete and $\{f([c]) \}_{c \in \ind(\cC)}$
    is a generating set for $\ima f$. \label{item:fixescomplete2}
  \item $f$ is $\cC$-complete and $f(\ksp(\cC)) = \ima f$. \label{item:fixescomplete3}
  \end{enumerate}
\end{proposition}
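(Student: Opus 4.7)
The plan is to deduce Proposition~\ref{proposition:fixescomplete} as a direct specialization of Lemma~\ref{lemma:ghomfixescomplete} by taking $H := \ksp(\cD)$, $G := \ksp(\cC)$ (viewed as a subgroup of $\ksp(\cD)$ via Diagram~(\ref{eq:kspdiagram})), and letting $G'$ be the codomain of $f$. Since the lemma is already stated as a three-way equivalence for arbitrary homomorphisms of abelian groups, the real content of the proof is just the dictionary translating its three conditions into the three conditions of the proposition.

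First I would verify \ref{item:fixescomplete1} $\Leftrightarrow$ (i) of Lemma~\ref{lemma:ghomfixescomplete}. By Definition~\ref{definition:completefixes}, $f$ being $\cC$-barcoding-equivalent means that there exists a $\cC$-barcoding invariant $f': \ksp(\cD) \rightarrow \ksp(\cC)$ with $f \sim f'$; by Definition~\ref{def:comparison_of_invariants}\ref{item:comparison of invariants2}, the condition $f \sim f'$ is precisely $\ker f = \ker f'$, and by Definition~\ref{definition:completefixes}\ref{definition:numfixes}, $f'$ being $\cC$-barcoding is precisely the condition $f'(c) = c$ for all $c \in \ksp(\cC) = G$. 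This matches item \ref{item:ghofixescomplete1} of Lemma~\ref{lemma:ghomfixescomplete} verbatim.

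Next I would translate conditions \ref{item:fixescomplete2} and \ref{item:fixescomplete3}. As noted immediately after Definition~\ref{definition:complete}, $f$ is $\cC$-complete if and only if $f|_{\ksp(\cC)}$ is injective, matching the first clause in items \ref{item:ghofixescomplete2} and \ref{item:ghofixescomplete3} of Lemma~\ref{lemma:ghomfixescomplete}. By Lemma~\ref{lemma:kspbasis}, the set $\ind(\cC)$ is a basis, hence a generating set, for $\ksp(\cC) = G$, so the clause ``$\{f([c])\}_{c \in \ind(\cC)}$ is a generating set for $\ima f$'' in \ref{item:fixescomplete2} is exactly the ``some (in fact, any) generating set'' clause of \ref{item:ghofixescomplete2} specialized to $J = \ind(\cC)$. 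Finally, $f(\ksp(\cC)) = \ima f$ in \ref{item:fixescomplete3} is literally $f(G) = \ima f$ of \ref{item:ghofixescomplete3}.

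With this dictionary in place, the proposition follows immediately from Lemma~\ref{lemma:ghomfixescomplete}. I do not anticipate a substantive obstacle: the only thing requiring any care is confirming that the inclusion $\ksp(\cC) \subseteq \ksp(\cD)$ (invoked from Diagram~(\ref{eq:kspdiagram})) is indeed the one implicitly used when one speaks of $f|_{\ksp(\cC)}$ and of $f'$ fixing elements of $\ksp(\cC)$; Convention~\ref{convention:subcat} ensures that $\cC$ is closed under the relevant operations so that this inclusion makes sense and that Lemma~\ref{lemma:kspbasis} applies.
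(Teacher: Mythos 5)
Your proposal is correct and is essentially identical to the paper's proof: the paper likewise notes via Lemma~\ref{lemma:kspbasis} that $\ind(\cC)$ is a basis (hence generating set) for $\ksp(\cC)$ and then invokes Lemma~\ref{lemma:ghomfixescomplete} with $G = \ksp(\cC)$. You have simply spelled out the dictionary between the two statements more explicitly, which is a fine way to present the same argument.
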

\begin{proof}
We note that by Lemma~\ref{lemma:kspbasis},
  $\{ [c]  \}_{c \in \ind(\cC)}$
  is a generating set for $\ksp(\cC)$ (in fact, a basis).  
  Then, the statement follows from Lemma~\ref{lemma:ghomfixescomplete} with $G = \ksp(\cC)$.
\end{proof}

  \begin{remark}
    \label{rem:basis}
    We 
    remark that the existence of a basis in the sense of \cite[Definition~4.11]{amiot2024invariants}  implies barcoding-equivalence.
    In our notation, 
    \cite[Definition~4.11]{amiot2024invariants} says that a basis of  an additive invariant $f:\ksp(\cD) \rightarrow G$ is 
    a set $Q$ of isoclasses of indecomposable objects in $\cD$
    such that $f$ induces an isomorphism 
    $\begin{tikzcd}[ampersand replacement=\&]
    \langle Q \rangle \rar{f|_{\langle Q \rangle}} \& \ima f\text{.}
    \end{tikzcd}$
    Here, $\langle Q \rangle$ is the free abelian subgroup of $\ksp(\cD)$ generated by $Q$.
    By Proposition~\ref{proposition:fixescomplete},  $f$ having a basis in this sense
    is equivalent to $f$ being $\cC$-barcoding-equivalent (with $\cC = \add Q$).
  \end{remark}

The following remark will be useful later.
\begin{remark}
  \label{remark:doubleapply}  
  Let  $f: \ksp(\cD) \rightarrow \ksp(\cC)$ be $\cC$-barcoding.
  Then, for any $x \in \ksp(\cD)$,
  $f(x) \in \ksp(\cC)\subseteq \ksp(\cD)$ and thus $f(f(x)) = f(x)$ as $f$ fixes $\cC$.  
  More precisely, we have
  $
    f \iota f = f
  $
  where $\iota : \ksp(\cC) \hookrightarrow \ksp(\cD)$
  is the natural inclusion map (cf.\ Diagram~\eqref{eq:kspdiagram}).
  By abuse of notation, we simply write $f f = f$.
  More generally, given any invariant $g$ on $\cD$ with codomain $\ksp(\cC)$, it follows that
  $fg = g$.  
\end{remark}

\subsection{Generalizations of prior results} 

We generalize
\cite[Theorem~5.12]{asashiba2019approximation} and \cite[Theorem~4.14]{asashiba2024interval},
which state that their proposed invariant, i.e., the interval replacement,
preserves the (interval) rank invariant:

\begin{proposition}
  \label{proposition:applyequivalent}  
  Let $f$ be a $\cC$-barcoding invariant on $\cD$,
  and let $g$ be an additive invariant on $\cD$  
  with $f \gtrsim g$.
  Then,
  $gf = g$.
\end{proposition}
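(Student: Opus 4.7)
The plan is to combine two ingredients already packaged in the excerpt: the idempotence of a $\cC$-barcoding invariant (Remark~\ref{remark:doubleapply}) and the explicit factorization of equivalent invariants through a common image (Lemma~\ref{lemma:invariantrelations}\ref{lemma:invariantrelations:num:sim}). Each gives a one-line identity, and the conclusion falls out by composing them.

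First I would invoke Remark~\ref{remark:doubleapply}: because $f: \ksp(\cD) \rightarrow \ksp(\cC)$ is $\cC$-barcoding, it acts as the identity on $\ksp(\cC)$, and hence $ff = f$ (where the inner $f$ is composed with the natural inclusion $\iota: \ksp(\cC) \hookrightarrow \ksp(\cD)$, as in the remark). Next I would apply Lemma~\ref{lemma:invariantrelations}\ref{lemma:invariantrelations:num:sim} to the hypothesis $f \sim g$: this yields an isomorphism $\phi: \ima f \rightarrow \ima g$ with $g = \phi f$.

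Combining these two facts, the desired identity is a direct computation:
\[
  gf = (\phi f) f = \phi (ff) = \phi f = g.
\]
There is no genuine obstacle here; the only subtlety worth flagging explicitly in the write-up is the implicit use of the inclusion $\iota$ when composing $f$ with itself, which is exactly the abuse of notation sanctioned by Remark~\ref{remark:doubleapply}. The proof thus amounts to observing that any $\cC$-barcoding invariant is idempotent and that equivalent invariants factor through one another, so post-composing $f$ by $g$'s reparametrization $\phi$ does not see the difference.
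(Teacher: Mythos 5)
Your proof is correct and follows exactly the same route as the paper's: both factor $g = \phi f$ via Lemma~\ref{lemma:invariantrelations}\ref{lemma:invariantrelations:num:sim} and then collapse $ff$ to $f$ (the idempotence from Remark~\ref{remark:doubleapply}), yielding $gf = \phi f f = \phi f = g$. The only difference is that you cite Remark~\ref{remark:doubleapply} explicitly where the paper uses it implicitly.
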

\begin{proof}
  By Lemma~\ref{lemma:invariantrelations}~\ref{lemma:invariantrelations:num:equiv},  
  there exists a   
  homomorphism $\phi: \ima f \rightarrow \ima g$ with
  $g = \phi f$. Then
  $
  gf = \phi f f = \phi f = g.
  $  
\end{proof}

By this proposition, the $\cC$-barcoding invariant $f$ preserves the 
coarser or equivalent invariant $g$,
in the sense for any $x \in \ksp(\cD)$,
the $g$-value of $x$ is the same as the $g$-value of $f(x)$.
This in particular implies that the property of preserving an equivalent invariant is one of the general properties of $\cC$-barcoding invariants, not unique to 
the interval replacement in \cite{asashiba2024interval}.

Next, we generalize Theorem~\ref{thm:dgm_generalize_the_barcode_and_completeness}~\ref{item:GRI_completeness} as follows:

\begin{theorem}[Optimality of Completeness]
  \label{theorem:nobigger}
 Let $\cC \subsetneq \cC' \subseteq \cD$
  and let  
  $f$ be a $\cC$-barcoding-equivalent invariant on $\cD$ (and thus $\cC$-complete;
  cf.\ Proposition~\ref{proposition:fixescomplete}).  
  Then, $f$ is not $\cC'$-complete (and thus not $\cC'$-barcoding-equivalent).
\end{theorem}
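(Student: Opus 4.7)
The plan is to first reduce to the case where the invariant is literally $\cC$-barcoding (not just equivalent to one), then exhibit an explicit nonzero element of $\ker f \cap \ksp(\cC')$, and finally invoke Proposition~\ref{proposition:fixescomplete} to upgrade failure of $\cC'$-completeness to failure of $\cC'$-barcoding-equivalence.

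For the reduction, since $f$ is $\cC$-barcoding-equivalent, Definition~\ref{definition:completefixes} gives a $\cC$-barcoding invariant $f' : \ksp(\cD) \rightarrow \ksp(\cC)$ with $f \sim f'$, hence $\ker f = \ker f'$. So it suffices to produce a nonzero element of $\ker f' \cap \ksp(\cC')$. Because $\cC \subsetneq \cC'$ (as subcategories satisfying Convention~\ref{convention:subcat}, which are determined by their indecomposables via Lemma~\ref{lemma:kscat}), I can pick some indecomposable $X \in \ind(\cC') \setminus \ind(\cC)$.

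The key construction is then the following candidate for a kernel element. Expand $f'([X]) \in \ksp(\cC)$ in the basis $\ind(\cC)$ as $f'([X]) = \sum_{i} n_i [C_i]$ with $C_i \in \ind(\cC)$ and $n_i \in \Z$, and set
\[
  y \;:=\; [X] - \sum_i n_i [C_i] \;\in\; \ksp(\cD).
\]
I would then verify three things. First, $y \in \ksp(\cC')$: this is immediate since $X \in \ind(\cC')$ and $C_i \in \ind(\cC) \subseteq \ind(\cC')$. Second, $y \neq 0$: by Lemma~\ref{lemma:kspbasis}, $\ind(\cD)$ is a basis of $\ksp(\cD)$, and the coefficient of $[X]$ in $y$ equals $1$ (since $X \notin \ind(\cC)$, so $[X] \notin \{[C_i]\}$). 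Third, $f'(y) = 0$: apply $f'$ and use that $f'$ fixes $\ksp(\cC)$ (Definition~\ref{definition:completefixes}~\ref{definition:numfixes}) to compute $f'(\sum_i n_i [C_i]) = \sum_i n_i [C_i] = f'([X])$, so $f'(y) = 0$. Hence $y$ is a nonzero element of $\ker f' \cap \ksp(\cC') = \ker f \cap \ksp(\cC')$, which shows $f$ is not $\cC'$-complete.

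The parenthetical consequence (not $\cC'$-barcoding-equivalent) then follows by contraposition from Proposition~\ref{proposition:fixescomplete}: any $\cC'$-barcoding-equivalent invariant is $\cC'$-complete. The only subtle point in this argument is verifying $y \neq 0$, which forces one to be careful to distinguish basis elements in $\ksp(\cD)$ from their expansions coming from $\cC$; the cleanest way is to observe that all $[C_i]$ appearing in $f'([X])$ lie in the $\Z$-span of $\ind(\cC)$ inside $\ksp(\cD)$ while $[X]$ does not, so the two cannot cancel. I do not expect any serious obstacle beyond this.
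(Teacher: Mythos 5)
Your proof is correct and is essentially the same argument as in the paper: both construct the nonzero kernel element $x - (\text{$\cC$-barcoding invariant})(x)$ for a suitable $x \in \ksp(\cC')$ outside $\ksp(\cC)$, and verify it lies in $\ker f \cap \ksp(\cC')$. The only cosmetic difference is that you specialize $x$ to an indecomposable $[X]$ with $X \in \ind(\cC')\setminus\ind(\cC)$ and argue non-vanishing via the basis coefficient of $[X]$, whereas the paper allows any $x \in \ksp(\cC')\setminus\ksp(\cC)$ and notes $t=0$ would force $x=\phi f(x)\in\ksp(\cC)$; both are clean and immediate.
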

\begin{proof}
  Let $f:\ksp(\cD)\rightarrow G$ be $\cC$-barcoding-equivalent invariant.
  By Lemma~\ref{lemma:invariantrelations}\ref{lemma:invariantrelations:num:sim},
  there exists an isomorphism $\phi:G\rightarrow \ksp(\cC)$ such that $\phi f$ is $\cC$-barcoding and thus $\phi f$ fixes any element of $\ksp(\cC)$.
  Let $x \in \ksp(\cC') \setminus \ksp(\cC)$
  and 
  $t := x - \phi f(x) \in \ksp(\cC')$,
  where we consider $\phi f(x) \in \ksp(\cC) \subseteq \ksp(\cC')$
  as an element of $\ksp(\cC')$.
  We have:
  \[\phi f(t) = \phi f(x-\phi f(x)) = \phi f(x) - \phi f(\phi f(x)) =\phi f(x) - \phi f(x) = 0,\]
  which shows $t\in \ker \phi f = \ker f$.
  Note that $t \neq 0$ since otherwise $x = \phi f(x) \in \ksp(\cC)$, which is a contradiction.
  This shows  that $0 \neq t \in  \ker f \cap \ksp(\cC')$, i.e.\ $f$ is not $\cC'$-complete.
\end{proof}

\subsection{Barcoding invariants form a discrete poset} 

Next, we will show that the partial order $\gtrsim$ (cf. Definition \ref{def:comparison_of_invariants}),
when restricted to $\cC$-barcoding invariants,
is \emph{discrete} in the sense that
$f\gtrsim g$ if and only if $f=g$.
We also recall that $f$ and $g$ are said to be \emph{incomparable} if $f \not\gtrsim g$ and $g \not \gtrsim f$.

\begin{lemma}
  \label{lemma:ghomequal}
  Let $f,g: H\rightarrow G$ be homomorphisms of
  abelian groups with $G \subseteq H$ 
  and $f(x) = x = g(x)$ for each $x \in G$.
  If $\ker f \subseteq \ker g$, then $f=g$.
\end{lemma}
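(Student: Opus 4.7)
The plan is to exploit the fact that both $f$ and $g$ are retractions onto $G$, in the sense that $f(x) = x = g(x)$ for every $x \in G$. The strategy is to show that for an arbitrary $h \in H$, the element $h - f(h)$ lies in $\ker f$, and then invoke the hypothesis $\ker f \subseteq \ker g$ to conclude $g(h) = f(h)$.

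First, I would fix an arbitrary $h \in H$. Since $f$ maps into $G$, we have $f(h) \in G \subseteq H$, and because $f$ fixes elements of $G$, we get $f(f(h)) = f(h)$. Consequently,
\[
  f(h - f(h)) = f(h) - f(f(h)) = f(h) - f(h) = 0,
\]
so $h - f(h) \in \ker f$. By the assumption $\ker f \subseteq \ker g$, this means $h - f(h) \in \ker g$ as well, and hence
\[
  0 = g(h - f(h)) = g(h) - g(f(h)).
\]
Now, since $f(h) \in G$ and $g$ fixes $G$, we have $g(f(h)) = f(h)$. Substituting gives $g(h) = f(h)$. As $h$ was arbitrary, this establishes $f = g$.

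There is no serious obstacle here; the only subtle point is being careful about the coercion $G \subseteq H$ so that applying $f$ or $g$ to elements of $G$ actually makes sense in the context of maps with domain $H$. The symmetry of the argument (which also yields $g(h) - f(h) \in \ker g$ trivially, and $\ker g \supseteq \ker f$ one direction only) is not needed, since the hypothesis $\ker f \subseteq \ker g$ is exactly what propagates the equality from the fixed subgroup to all of $H$.
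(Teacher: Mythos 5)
Your proof is correct. It takes a genuinely different route from the paper: the paper invokes its Remark~\ref{remark:ghomrelations}\ref{remark:ghomrelations:num:equiv} (a packaged factorization fact: $\ker f \subseteq \ker g$ yields a homomorphism $\phi:\ima f\to\ima g$ with $g=\phi f$), observes $\ima f = G = \ima g$, shows that $\phi$ is the identity on $G$, and concludes $g = \phi f = f$. You instead carry out a self-contained element chase: for arbitrary $h\in H$, show that $h - f(h)\in\ker f$ by exploiting $f\circ f = f$ on the image, then push this into $\ker g$ and use $g\circ f = f$ to obtain $g(h)=f(h)$. Your version is more elementary and avoids the auxiliary remark; the paper's version factors the work so that the retraction-onto-$G$ structure is invoked only once (to identify $\phi$ as the identity), which makes its Lemma~\ref{lemma:ghomfixescomplete} and later results flow from the same factorization machinery. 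Both are short and correct; yours is preferable if one wants a standalone proof, the paper's if one wants to reuse the factorization lemma systematically.
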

\begin{proof}
 If $\ker f \subseteq \ker g$,
 then, by Remark~\ref{remark:ghomrelations}~\ref{remark:ghomrelations:num:equiv},
 there is a group homomorphism $\phi : \ima f \rightarrow \ima g$ such that
 $g = \phi f$. 
 Also, since $f$ and $g$ fix elements of $G$, $\ima f = G = \ima g$.
 Then, for any $x \in G \subseteq H$, we have
 $
 x = g(x) = \phi(f(x)) = \phi(x),
 $
 i.e.\ $\phi$ is the identity map on $G$. This implies $g=f$.
\end{proof}

\begin{theorem}[Barcoding invariants form a discrete poset]
  \label{theorem:comparing}  
  Let $\cC \subseteq \cD$ (cf. Convention \ref{convention:essentially_small_Krull-Schmidt}),
  and let
  $f,g: \ksp(\cD) \rightarrow \ksp(\cC)$ be 
  $\cC$-barcoding.
  Then, either
  \begin{enumerate}[label=(\roman*)]
  \item $f$ and $g$ are incomparable, or
  \item $f = g$.
  \end{enumerate}
\end{theorem}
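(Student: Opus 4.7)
The plan is to derive the dichotomy directly from Lemma~\ref{lemma:ghomequal}, which has been stated in exactly the generality we need. The hypotheses of that lemma are: two homomorphisms of abelian groups $H \to G$ with $G \subseteq H$, both fixing every element of $G$, and one having kernel contained in the kernel of the other. Under our hypotheses, $f$ and $g$ are both $\cC$-barcoding, hence both fix every element of $\ksp(\cC)$; and via the natural inclusion $\ksp(\cC) \hookrightarrow \ksp(\cD)$ from Diagram~(\ref{eq:kspdiagram}), we are in precisely the situation $H = \ksp(\cD)$, $G = \ksp(\cC)$.

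So the proof I would write is short. Suppose $f$ and $g$ are not incomparable, i.e., one refines the other. By symmetry in $f$ and $g$, assume without loss of generality that $f \gtrsim g$, which by Definition~\ref{def:comparison_of_invariants} means $\ker f \subseteq \ker g$. Applying Lemma~\ref{lemma:ghomequal} with $H = \ksp(\cD)$, $G = \ksp(\cC)$, and the two maps $f$ and $g$ (each of which fixes every element of $\ksp(\cC)$ by Definition~\ref{definition:completefixes}\ref{definition:numfixes}), we conclude $f = g$. Therefore the only alternative to incomparability is equality, which establishes the dichotomy.

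There is no substantial obstacle; the content has been pushed into Lemma~\ref{lemma:ghomequal}, which already contains the one genuine calculation (namely that the comparison homomorphism $\phi : \ima f \to \ima g$ produced by Remark~\ref{remark:ghomrelations}\ref{remark:ghomrelations:num:equiv} is forced to be the identity on $G$ once both maps fix $G$, so $f = \phi f = g$). The only care needed is to state the symmetry argument cleanly, since $\gtrsim$ is not a priori symmetric, but the two cases $f \gtrsim g$ and $g \gtrsim f$ are treated identically by swapping the roles of $f$ and $g$ in the appeal to the lemma.
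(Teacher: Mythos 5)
Your proposal is correct and coincides with the paper's own argument: both reduce the theorem to Lemma~\ref{lemma:ghomequal} (applied with $H=\ksp(\cD)$, $G=\ksp(\cC)$) after observing that $f\gtrsim g$ means $\ker f\subseteq\ker g$, and both handle the other direction by symmetry. No gaps.
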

\begin{proof}
  Suppose that $f \gtrsim g$.
  Then, we have $\ker f \subseteq \ker g$ and thus $f = g$ by
  Lemma~\ref{lemma:ghomequal}.
  Similarly, the assumption $g \gtrsim f$ also implies that $f = g$.
  This completes the proof.
\end{proof}

By weakening the assumption that $f$ and $g$ are $\cC$-barcoding
to the condition that $f$ and $g$ are $\cC$-barcoding-\emph{equivalent},
we obtain:
\begin{corollary}
  \label{corollary:comparingbarcodelike}  
  Let $\cC \subseteq \cD$,
  and  suppose that
  both $f$ and $g$ are $\cC$-barcoding-equivalent invariants on $\cD$ (whose codomains may differ).
  Then, either
  \begin{enumerate}[label=(\roman*)]
  \item $f$ and $g$ are incomparable, or
  \item $f \sim g$.
  \end{enumerate}
\end{corollary}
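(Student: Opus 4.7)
The plan is to reduce the corollary to Theorem~\ref{theorem:comparing} by passing from $\cC$-barcoding-equivalent invariants to genuinely $\cC$-barcoding invariants. By the definition of $\cC$-barcoding-equivalent, we can pick $\cC$-barcoding invariants $\tilde f, \tilde g : \ksp(\cD) \rightarrow \ksp(\cC)$ with $f \sim \tilde f$ and $g \sim \tilde g$. Since the relation $\gtrsim$ is defined purely in terms of kernels and the equivalence $\sim$ means equal kernels (Definition~\ref{def:comparison_of_invariants}), equivalence preserves $\gtrsim$: in particular, $f \gtrsim g$ if and only if $\tilde f \gtrsim \tilde g$.

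Now I would split into cases. If $f$ and $g$ are incomparable, we are in case (i) and there is nothing to prove. Otherwise, without loss of generality $f \gtrsim g$, which by the observation above gives $\tilde f \gtrsim \tilde g$. Applying Theorem~\ref{theorem:comparing} to the two $\cC$-barcoding invariants $\tilde f$ and $\tilde g$ rules out incomparability and forces $\tilde f = \tilde g$. Hence $\ker \tilde f = \ker \tilde g$, and since $\ker f = \ker \tilde f$ and $\ker g = \ker \tilde g$ (again by $f \sim \tilde f$ and $g \sim \tilde g$), we conclude $\ker f = \ker g$, i.e.\ $f \sim g$, putting us in case (ii).

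There is no real obstacle here; the argument is essentially a diagram chase. The only point that deserves an explicit sentence is the transitivity-style fact that $\sim$ preserves $\gtrsim$, which is immediate from the kernel-based formulations in Definition~\ref{def:comparison_of_invariants}, or alternatively from Lemma~\ref{lemma:invariantrelations}\ref{lemma:invariantrelations:num:sim} by composing the isomorphisms identifying the images of equivalent invariants. All the real content has already been absorbed into Theorem~\ref{theorem:comparing} via Lemma~\ref{lemma:ghomequal}, so the corollary is a short bookkeeping step on top of that theorem.
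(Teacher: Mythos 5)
Your proof is correct and is exactly the unwinding the paper intends when it says the corollary follows ``immediately'' from Theorem~\ref{theorem:comparing} by weakening ``$\cC$-barcoding'' to ``$\cC$-barcoding-equivalent.'' Replacing $f$ and $g$ by equivalent $\cC$-barcoding representatives $\tilde f, \tilde g$, noting that $\gtrsim$ is determined by kernels so it is preserved under $\sim$, and then invoking Theorem~\ref{theorem:comparing} is precisely the intended bookkeeping.
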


In the previous corollary,
$f$ and $g$ are assumed to have $\ind(C)$ as a common basis.
By allowing $f$ and $g$ to have different bases,
we obtain a variation of this corollary. More specifically, 
if $g$ is barcoding-equivalent with basis containing the basis for $f$, then $f$ cannot be finer than $g$.

\begin{corollary}\label{cor:comparison}
 Let $\cC \subsetneq \cC' \subseteq \cD$, and  
 let $f$ and $g$ be
   $\cC$-barcoding-equivalent  
  and $\cC'$-barcoding-equivalent, respectively.
  Then, either
  \begin{enumerate}[label=(\roman*)]
  \item $f$ and $g$ are incomparable, or
  \item $g \gnsim f$.
  \end{enumerate}
\end{corollary}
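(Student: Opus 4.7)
The plan is to reduce to Theorem~\ref{theorem:nobigger} by ruling out the relation $f \gtrsim g$, which leaves only the two possibilities in the statement. Concretely, I would argue by contradiction: suppose $f \gtrsim g$, i.e., $\ker f \subseteq \ker g$, and derive an incompatibility between the completeness properties forced on $f$ and $g$ by their respective bases.

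The two tools needed are Proposition~\ref{proposition:fixescomplete} and Theorem~\ref{theorem:nobigger}. First, since $g$ is $\cC'$-barcoding-equivalent, Proposition~\ref{proposition:fixescomplete} gives that $g$ is $\cC'$-complete, so $\ker g \cap \ksp(\cC') = \{[0]\}$. Second, since $f$ is $\cC$-barcoding-equivalent with $\cC \subsetneq \cC'$, Theorem~\ref{theorem:nobigger} (Optimality of Completeness) tells us that $f$ fails to be $\cC'$-complete, so there exists some $0 \neq x \in \ker f \cap \ksp(\cC')$. Combining these with the assumption $\ker f \subseteq \ker g$ yields $x \in \ker g \cap \ksp(\cC') = \{[0]\}$, contradicting $x \neq 0$.

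Hence $f \not\gtrsim g$, which simultaneously excludes the equivalence $f \sim g$ and the strict refinement $f \gnsim g$. The only remaining possibilities under the trichotomy induced by the partial order $\gtrsim$ are therefore that $f$ and $g$ are incomparable or that $g \gnsim f$, as required. I do not anticipate any real obstacle here, since the entire content of the argument has already been packaged into Theorem~\ref{theorem:nobigger}; the corollary is essentially the asymmetric comparison that Theorem~\ref{theorem:nobigger} produces when one invariant's basis strictly contains the other's.
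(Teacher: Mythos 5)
Your proof is correct and shorter than the paper's. The paper takes the longer route of composing $g$ with the natural projection $\pi : \ksp(\cC') \to \ksp(\cC)$ to obtain a second $\cC$-barcoding invariant $\pi g$, then invokes Theorem~\ref{theorem:comparing} to split into the cases $\pi g = f$ and $\pi g$, $f$ incomparable, and in the second case unwinds $f \not\gtrsim \pi g$ to reach $f \not\gtrsim g$ anyway. You instead arrive at $f \not\gtrsim g$ via a single contradiction: $\cC'$-barcoding-equivalence of $g$ gives $\cC'$-completeness by Proposition~\ref{proposition:fixescomplete}, hence $\ker g \cap \ksp(\cC') = \{0\}$, while $\cC$-barcoding-equivalence of $f$ with $\cC \subsetneq \cC'$ gives failure of $\cC'$-completeness by Theorem~\ref{theorem:nobigger}, hence a nonzero element of $\ker f \cap \ksp(\cC')$; so $\ker f \not\subseteq \ker g$. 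Both routes ultimately lean on Theorem~\ref{theorem:nobigger}, but yours bypasses Theorem~\ref{theorem:comparing} and the projection $\pi$ entirely, showing the corollary is a direct consequence of the clash between $\cC'$-completeness of $g$ and $\cC'$-incompleteness of $f$ rather than of the discreteness of the barcoding poset. One cosmetic point: you call the case split a ``trichotomy'', but for a preorder there are four mutually exclusive cases ($f \sim g$, $f \gnsim g$, $g \gnsim f$, incomparable); your argument correctly eliminates the first two by establishing $f \not\gtrsim g$, so the conclusion stands.
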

\begin{proof}  
  Without loss of generality, suppose that
  $f$ 
  is $\cC$-barcoding
  and
  $g$ 
  is $\cC'$-barcoding
  (and thus have codomains $\ksp(\cC)$ and $\ksp(\cC')$ respectively).  
  Consider the natural projection 
  $\pi : \ksp(\cC') \rightarrow \ksp(\cC)$,
  which induces the following diagram:
  \begin{equation}\label{eq:f_and_g}
    \begin{tikzcd}
      \ksp(\cD) \rar{g} \ar[dr]{}{\pi g} \ar[dr,bend right,swap]{}{f} & \ksp(\cC') \dar{\pi} \\
      & \ksp(\cC) \mathrlap{.}
    \end{tikzcd}
  \end{equation}
  Since $\cC \subseteq \cC'$ and $g$ fixes elements of $\ksp(\cC')$,
  $\pi g$ fixes elements of $\ksp(\cC)$.  
  By Theorem~\ref{theorem:comparing},
  either $\pi g = f$, or $\pi g$ and $f$ are incomparable.

  \begin{itemize}
  \item Suppose that $\pi g = f$, which implies that
    $\ker g \subseteq \ker \pi g = \ker f$, i.e. $g \gtrsim f$.
    Also, by Theorem~\ref{theorem:nobigger},
    $f$ cannot be $\cC'$-complete and thus $g \gnsim f$. 
  \item Suppose that $\pi g$ and $f$ are incomparable, which implies
    $f \not \gtrsim \pi g$ and $\pi g \not \gtrsim f$.
    Now, $f \not \gtrsim \pi g$ implies that $\ker f \not \subseteq \ker \pi g$,
    i.e. there exists $x \in \ksp(\cD)$ with
    $f(x) = 0$ and $\pi (g(x)) \neq 0$.
    This implies that
    $g(x) \neq 0$, and thus $\ker f \not \subseteq \ker g$,
    i.e.\ $f \not \gtrsim g$.
    This implies that either $g \gnsim f$, or $f$ and $g$ are incomparable.

    On the other hand, $\pi g \not \gtrsim f$ implies that
    there exists $x \in \ksp(\cD)$ with
    $\pi(g(x)) = 0$ and $f(x) \neq 0$.
    However, we cannot conclude    
    anything interesting from this.
    In particular, this does not necessarily imply $g(x) = 0$.
  \end{itemize}
\end{proof}

\subsection{Transfer Isomorphism between kernels of barcoding invariants}
\label{subsec:transfer iso} 
Recall from Corollary \ref{corollary:comparingbarcodelike} that,
when $\cC\subseteq \cD$, the collection  
of $\cC$-barcoding-equivalent invariants on $\cD$ does not contain any pair of invariants where one strictly refines the other.
We take this further as follows.  
For each pair of $\cC$-barcoding-equivalent invariants,
we establish a concrete and special isomorphism, called a \emph{transfer isomorphism},
between their kernels.
In particular, using the transfer isomorphism, given any pair of persistence modules that are not distinguishable via an invariant $f$ but are via another invariant $g$, one can immediately generate another pair of persistence modules that are so via $f$, but not via $g$.

Given two $\cC$-barcoding invariants
$f,g: \ksp(\cD) \rightarrow \ksp(\cC)$,
suppose that there exists $x \in \ker f$ such that $x \not\in \ker g$
(and thus\ $f \not\gtrsim g$). 
Then,  since
$g(x) \in \ksp(\cC)$, we have $f(g(x)) = g(x)$ and $g(g(x)) = g(x)$
(cf.\ Remark~\ref{remark:doubleapply}).
Hence, for
$y := x - g(x)$,
we have
\begin{align*}
  f(y) &= f(x-g(x)) = f(x) - f(g(x)) = f(x) - g(x) = -g(x) \neq 0, \\
  g(y) &= g(x-g(x)) = g(x) - g(g(x)) = g(x) - g(x) = 0,
\end{align*}  
which proves $y \not\in \ker f$,  $y \in \ker g$,
and in turn
$g \not\gtrsim f$.
We have:
\begin{theorem}[Transfer isomorphism between the kernels of barcoding invariants]
  \label{theorem:isokernel}
  Let $\cC \subseteq \cD$,
  and  
  let $f$ and $g$ be $\cC$-barcoding.
  Then, the map $T$ defined by
    \begin{equation}
      \begin{array}{rccl}
        T: & \ker f & \rightarrow &  \ker g \\ 
           & x & \mapsto &  x - g(x)
      \end{array}
    \end{equation}
    is an isomorphism, and
    restricts to  a bijection $ \ker f \setminus \ker g \to \ker g \setminus \ker f$, and fixes the intersection $\ker f\cap \ker g$.
\end{theorem}
We call $T$ the \emph{transfer isomorphism} for the ordered pair $(f, g)$.
We note that $T$ is the restriction of $\mathrm{id}_{\ksp(D)} - g:\ksp(D) \rightarrow \ksp(D)$ to $\ker f$. 

\begin{proof}  
  For any $x\in \ker f$, we have
  \[
    g(T(x)) = g(x-g(x)) = g(x) - g(g(x)) = g(x) - g(x) = 0,
  \]
  which shows $T(x)\in \ker g$.
  It is also clear that $T$ is a homomorphism, whose 
  inverse is $S: \ker g \rightarrow \ker f$ defined by $S(y) := y - f(y)$.
  Indeed,
  for any $x \in \ker f$, $(S \circ T)(x) = x-g(x)-f(x)+f(g(x)) = x$ (cf.\ Remark~\ref{remark:doubleapply}),
  and symmetrically
  $T \circ S=\mathrm{id}_{\ker g}$. 
  
  In the paragraph before this theorem, we already saw that $T$ restricts to a map from $ \ker f \setminus \ker g$ to $\ker g \setminus \ker f$. Note that $S$ restricted to $\ker g \setminus \ker f$ is the inverse to this restriction, and thus $T$ restricts to a bijection $ \ker f \setminus \ker g \to \ker g \setminus \ker f$. It is easy to see that $T$ fixes the intersection $\ker f\cap \ker g$, completing the proof.
\end{proof}

\begin{remark}  
We give an alterative construction of the transfer isomorphism.
  Let $i$ be the inclusion $\ksp(\cC)\rightarrow \ksp(\cD)$ and $q:\ksp(\cD)\rightarrow \coker\ i$ be the quotient map.  
  Then, because $fi=1_{\ksp(C)}$, the following exact sequence
  \[0\to \ksp(C) \stackrel{i}{\to} \ksp(D) \stackrel{q}{\to} \coker\ i \to 0 \]
  splits, by the splitting lemma.
  This shows that the map
  \begin{align*}
(f, q) : \ksp(\cD) &\longrightarrow \ksp(\cC) \oplus \coker\ i \\
x &\longmapsto (f(x), q(x))
\end{align*}
  is an isomorphism. 
  We claim that the isomorphism $(f, q)$ restricts to the kernel $\ker f$, mapping it onto its image as follows:
  \begin{align*}
    (f, q)|_{\ker f} : \ker f &\xrightarrow{\cong} 0 \oplus \coker\ i \cong \coker\ i= \ksp(\cD) / \ksp(\cC).
  \end{align*}
  To see surjectivity, we note that for any $[x] := x + \ksp(\cC) \in \ksp(\cD) / \ksp(\cC)$,    
  $x - f(x) \in \ker f$ is the preimage of $(0, [x]) \in 0 \oplus \coker\ i$ under $(f,q)$.
  Similarly, $(g,q)$ restricted to $\ker g$ gives an isomorphism
  $\ker g \rightarrow 0 \oplus \coker\ i$.
  Hence, we have the isomorphism  
  \[
    \begin{tikzcd}[ampersand replacement=\&,column sep=6em]
      \ker f \rar{{(f,q)|_{\ker f}}} \& 0 \oplus \coker\ i  \rar{\left[(g,q)|_{\ker g}\right]^{-1}} \& \ker g. 
    \end{tikzcd}
  \]  
  This isomorphism coincides with the transfer isomorphism $T$: for any
  $x \in \ker f$,  
  \[
    \left[(g,q)|_{\ker g}\right]^{-1}((f,q)(x)) =
    \left[(g,q)|_{\ker g}\right]^{-1}((0, x+ \ksp(\cC))) =
    x - g(x) = T(x).
  \]
\end{remark}

\begin{remark}
  \label{rem:special properties of T}  
  We remark on properties of the transfer isomorphism $T$ that do not
  necessarily hold for arbitrary isomorphism $\ker f \to \ker g$.
  \begin{enumerate}[label=(\roman*)]
  \item The transfer isomorphism $T$ induces an isomorphism 
    \[
      \frac{\ker f}{\ker f \cap \ker g}\to \frac{\ker g}{\ker f \cap \ker g}.
    \]   

  \item   
    Because $T$ restricts to  a bijection $ \ker f \setminus \ker g \to \ker g \setminus \ker f$,
    this precludes the \emph{strict} containment of $\ker f$ in $\ker g$ and vice versa.     
    Hence, Theorem~\ref{theorem:isokernel} implies Corollary~\ref{corollary:comparingbarcodelike}:
    Any two $\mathcal{C}$-barcoding-equivalent invariants $f$ and $g$ are either:
    (i) incomparable, which occurs when both $\ker f \setminus \ker g$ and $\ker g \setminus \ker f$ are nonempty and in bijection with each other; or 
    (ii) equivalent ($f \sim g$), which occurs when $\ker f \setminus \ker g = \ker g \setminus \ker f = \emptyset$ (i.e., $\ker f = \ker g$).    
    \label{item:generic iso and gtrsim}

   
\end{enumerate}
\end{remark}

The transfer isomorphism  for barcoding invariants
immediately serves as a transfer isomorphism for respective equivalent invariants.  
\begin{corollary}
  \label{corollary:isokernel-equiv}
  Let $\cC \subseteq \cD$,
  and  
  let $f'$ and $g'$ be $\cC$-barcoding-equivalent.
  Then, the transfer isomorphism $T$ for the pair $(f,g)$
  where $f$ and $g$ are barcoding invariants with $f\sim f'$ and $g\sim g'$,
  is a transfer isomorphism $T: \ker f' \rightarrow \ker g'$.
\end{corollary}



As a consequence, between any two of the additive invariants $f$ and $g$
given in Example \ref{example:compressionbasedinvariants}
there exists the transfer isomorphism in each direction.
Furthermore, Lemma \ref{lemma:invariant_kernel}~\ref{lemma:invariantrelations:num:kerinterpret} implies that the transfer isomorphism
sends the pairs of $k$-representations of $P$
that are not distinguishable by $f$ to the pairs of $k$-representations of $P$
that are not distinguishable by $g$, and vice versa.

To give a concrete depiction of the previous theorem and corollary, we make the following observation.

\begin{remark}\label{rem:depiction}
  For persistence modules over a commutative grid $P$, the generalized rank invariant $\rk^\Int$ determines the bigraded Betti numbers \cite{kim2021bettis}.
  However, 
  it is not difficult to see that $\pi^\Int$ does not determine the bigraded Betti numbers. These facts might give the impression that  $\rk^\Int$ is a more delicate invariant than $\pi^\Int$.
  However, within any collection of persistence modules sharing the same generalized rank invariant, $\pi^\Int$ clearly serves as a more delicate invariant than $\rk^\Int$.
  An implication of Theorem~\ref{theorem:isokernel} is that while neither of $\rk^\Int$ nor $\pi^\Int$
  is 
  strictly finer than the other, they have isomorphic kernels via the transfer isomorphism.
\end{remark}

We emphasize that the gist of Theorem~\ref{theorem:isokernel} is not the mere existence of \emph{an} isomorphism between kernels, but the \emph{transfer} isomorphism itself.
In fact, the mere existence of an isomorphism $\ker f\cong \ker g$ can 
be guaranteed through standard arguments under mild assumptions.
However, such an existence is too weak to characterize the relation $\gtrsim$,
as detailed in Remarks~\ref{rem:genericisoistooweak1}~and~\ref{rem:generic iso is too weak} below.

\begin{remark}
  \label{rem:genericisoistooweak1}
  Let $\cC \subseteq \cD$,
  and  
  let $f$ and $g$ be $\cC$-barcoding as in Theorem~\ref{theorem:isokernel}.
  Under the extra assumption that $\ind(\cD)$ is finite,
  the following ``rank argument" proves that $\ker f$ and $\ker g$ are isomorphic.  
  
  Note that, since $f$ is surjective, and $\ksp(\cC)$ is a free abelian group (thus projective),
  the exact sequence $0\rightarrow \ker f \rightarrow \ksp(\cD) \stackrel{f}{\rightarrow} \ksp(\cC)\rightarrow 0$ splits, i.e. 
  \[\ksp(\cD) \cong \ker f \oplus \ksp(\cC),\ \mbox{and similarly,}\ 
    \ksp(\cD) \cong \ker g \oplus \ksp(\cC).\]  
  Since $\ksp(\cD)$ is a free abelian group of finite rank, so are $\ker f$ and $\ker g$. Thus, $\ker f \cong \ker g$ $\cong \Z^m$, where $m$ equals the difference between the ranks of $\ksp(\cD)$ and $\ksp(\cC)$.
  However, an arbitrary isomorphism between $\ker f$ and $\ker g$ need not satisfy the properties of a transfer isomorphism, and in particular, the existence of such an isomorphism alone does not imply Corollary~\ref{corollary:comparingbarcodelike}, unlike in the case of a transfer isomorphism (cf.~Remark~\ref{rem:special properties of T}~\ref{item:generic iso and gtrsim}).
\end{remark}

The following remark highlights
that the mere existence of some isomorphism is too coarse as a lens for comparing barcoding invariants.

\begin{remark}\label{rem:generic iso is too weak}  By extending the previous rank argument, we can even sometimes show that two barcoding invariants \emph{over different bases} have isomorphic kernels. For example, let $\cC,\cC'\subseteq \cD$ such that $\ind(\cD)\setminus \ind (\cC)$ and $\ind(\cD)\setminus \ind (\cC')$ have the same cardinality; this is always the case when $\ind(\cD)$ is infinite while both $\ind (\cC)$ and $\ind (\cC')$ are finite
  (even if $\cC\subsetneq \cC'$).
  Then, the projection maps from $\ksp(D)$ to $\ksp(C)$ and to $\ksp(C')$ are $\cC$- and $\cC'$-barcoding invariants, respectively. Since their kernels are freely generated by the bases $\ind(\cD)\setminus \ind (\cC)$ and $\ind(\cD)\setminus \ind (\cC')$, which have the same cardinality, their kernels are obviously isomorphic.
  This implies that even when $\cC \subsetneq \cC'$, the projection to $\ksp(\cC')$ is strictly finer than the projection to $\ksp(\cC)$ under the order $\gtrsim$, despite their kernels being isomorphic.
\end{remark}



\section{Applications}\label{sec:applications}

In this section, we present
(i) applications of Theorem~\ref{theorem:comparing} and Corollary~\ref{corollary:comparingbarcodelike}, and
(ii) examples illustrating how the transfer isomorphism can be used to produce pairs of representations that are distinguished by one invariant but not by another.

\subsection{Applications to interval-barcoding-equivalent invariants
  for representations of finite posets}

In this section, we classify the interval-barcoding-equivalent invariants considered in Example \ref{example:compressionbasedinvariants}, using the equivalence relation $\sim$ from Definition \ref{def:comparison_of_invariants} (cf. Theorem \ref{thm:classifying_interval-barcoding_invariants}). 
In addition, 
we show that, among homological invariants with respect to a set of indecomposables containing all the intervals and with finite relative global dimension, 
the interval Euler characteristic has the weakest discriminating power (cf. Theorem \ref{thm:hierarchy}).

We begin with a technical lemma;
the results therein follow largely from the existing literature.

\begin{lemma}
  \label{lem:fullconvex}
    Let $P$ be a finite poset, and let $P'\subset P$ be a convex full subposet. 
    \begin{enumerate}[label=(\roman*)]
    \item \label{lem:paddingsubcat}      
      The category $\pfdrep P'$ embeds as a full subcategory of $\pfdrep P$ through the inclusion functor
      $\iota : \pfdrep P' \hookrightarrow \pfdrep P$,
      which extends representations of $P'$ by assigning
      zero vector spaces and zero linear maps on $P\setminus P'$.
    \item \label{lem:paddingintervals}      
      The inclusion functor $\iota$ induces a natural inclusion
        $\iota': \ksp(\add \Int(P')) \hookrightarrow \ksp(\add \Int(P))$.
    \end{enumerate}
    In what follows, we consider the additive invariants
    $\pi^\Int,\ \rk^\Int,\ \beta^\Int_0$ and $\chi^\Int$
    on
    $\add \Int(P)$,
    and also  $\pi_{P'}^\Int,\ \rk_{P'}^\Int,\ \beta^\Int_{0,P'}$, and $\chi^\Int_{P'}$ on $\add \Int(P')$.
 \begin{enumerate}[label=(\roman*),resume]
 \item \label{lem:paddingintervalmult} For any representation $X$ of $P'$ we have   
     \begin{align}\label{eq:four equalities}
     \begin{split}
       \pi^\Int([\iota(X)]) & = \iota'\left( \pi_{P'}^\Int([X]) \right) \\
       \rk^\Int([\iota(X)]) & = \iota'\left( \rk^\Int_{P'}([X]) \right) \\
       \beta^\Int_{0}([\iota(X)]) & = \iota'\left( \beta^\Int_{0, P'}([X]) \right) \\ 
       \chi^\Int_{}([\iota(X)]) & = \iota'\left( \chi^\Int_{P'}([X]) \right).
      \end{split} 
     \end{align}     
 \end{enumerate} 
  \end{lemma}
\begin{proof}  %
  \ref{lem:paddingsubcat}: This is a standard result. See for example \cite{aoki2023summand}, in particular the paragraph immediately preceding Lemma 3.13 of \cite[Section 3.2]{aoki2023summand}.
    
  \noindent \ref{lem:paddingintervals}: This follows from \cite[Lemma 3.13]{aoki2023summand}.
  
  \noindent  \ref{lem:paddingintervalmult}: For the interval-multiplicity invariant $\pi^\Int$, the equality directly follows by viewing $\pfdrep P'$ as a full subcategory of $\pfdrep P$.

  Next, consider the generalized rank invariant $\rk^\Int$.  
  Let $I\in \Int(P)$.
  The coefficient of $[I]$ in $\rk^\Int([\iota(X)])$
  is equal to the multiplicity of $k_I$ as a direct summand of $\iota(X)|_I$  \cite[Lemma~3.1]{chambers2018persistent}
  \cite[Remark~6.16]{asashiba2024interval}.
  If $I \in \Int(P') \left(\subseteq \Int(P)\right)$, then we have $\iota(X)|_I = X|_I$, and thus  
  \[
    \rk^\Int([\iota(X)])(I) = \iota'\left( \rk_{P'}^\Int([X]) \right)(I).
  \]  
  We claim that the preceding equality holds even under the assumption that $I\not\in \Int(P')$.
  Suppose this is the case.  
  Then, the RHS is zero by definition.
  The LHS is also zero because
  by the definition of $X$, $\iota(X)|_I$ has support contained in $P'$, but 
  $k_I$ is not contained in $P'$, and thus
  $\iota(X)|_I$ cannot have $k_I$ as a direct summand.

  The last two equalities in Equation~\eqref{eq:four equalities} follow from \cite[Theorem~3.14~(1)~and~(2)]{aoki2023summand}. 
\end{proof}

In what follows, we show that certain pairs of interval-barcoding invariants are incomparable.

\begin{lemma}
  \label{lem:D4}
  For the posets $D_{1,2}$ and $D_{0,3}$ given by the following Hasse diagrams,
  the invariants $\beta_0^\Int$, $\chi^\Int$, $\dgm^\Int$ are pairwise unequal
  (and thus incomparable by Theorem~\ref{theorem:comparing}).
  \begin{center}  
  $D_{1,2}:
    \begin{tikzpicture}[baseline=4mm]      
      \node (a) at (0,1) {$1$};
      \node (b) at(-1,0) {$2$}; 
      \node (c) at(0,0) {$3$};
      \node (d) at(1,0) {$4$}; 
      \draw[->] (a)--(c); 
      \draw[->] (c)--(b); 
      \draw[->] (c)--(d);
    \end{tikzpicture}$
    \hspace{10mm}    
  $D_{0,3}:
    \begin{tikzpicture}[baseline=4mm]      
      \node (a) at (0,1) {$1$};
      \node (b) at(-1,0) {$2$}; 
      \node (c) at(0,0) {$3$};
      \node (d) at(1,0) {$4$}; 
      \draw[->] (c)--(a); 
      \draw[->] (c)--(b); 
      \draw[->] (c)--(d);
    \end{tikzpicture}
    $  
\end{center}
\end{lemma}
\begin{proof}  
  It is well-known that over the poset $D_{1,2}$ (respectively, $D_{0,3}$),
  up to isomorphism, there exists exactly one non-interval indecomposable representation $X_{1}$ (respectively, $X_2$) and that both $X_1$ and $X_2$ have the same dimension vector $\smat{ &1& \\1&2&1}$.\footnote{
      To see this, we note the following.
      First, $D_{1,2}$ and $D_{0,3}$ are both Dynkin quivers with underlying Dynkin graph $\mathbb{D}_4$.
      By Gabriel's Theorem for a Dynkin quiver $Q$ \cite{gabriel1972unzerlegbare},
      the mapping that sends representations to their dimension vectors
      provides a bijection between the set of isomorphism classes of indecomposable representations of $Q$
      and the set of positive roots of the quadratic form of $Q$.
      Furthermore, the quadratic form does not depend on the orientation of $Q$.
      A complete list of the positive roots for $\mathbb{D}_4$
      is given for example in \cite[Example~VII.4.15(b)]{assem2006elements}.      
      See \cite[Chapter~VII~Sections~3,~4,~5]{assem2006elements} for details.
    }
  Below, we give the values of the three invariants evaluated on $[X_1]$ (and $[X_2]$ respectively),
  expressing intervals by their dimension vectors. We have
  \[
    \begin{aligned}[t]
      \beta_0^\Int([X_1]) & = \left[\smat{ &0& \\0&1&1}\right] + \left[\smat{ &1& \\1&1&1}\right] + \left[\smat{ &0& \\1&1&0}\right], \\
      \chi^\Int([X_1]) & = \left[\smat{ &0& \\0&1&1}\right] + \left[\smat{ &1& \\1&1&1}\right] + \left[\smat{ &0& \\1&1&0}\right] - \left[\smat{ &0& \\1&1&1}\right], \\
      \dgm^\Int([X_1]) & = \left[\smat{ &1& \\0&1&1}\right] + \left[\smat{ &0& \\0&1&0}\right] + \left[\smat{ &1& \\1&1&0}\right] - \left[\smat{ &1& \\0&1&0}\right],
    \end{aligned}
  \]
  which are pairwise unequal, and
  \[
    \begin{aligned}[t]
      \beta_0^\Int([X_2]) & = \left[\smat{ &1& \\0&1&1}\right] + \left[\smat{ &0& \\1&1&1}\right] + \left[\smat{ &1& \\1&1&0}\right], \\
      \chi^\Int([X_2]) & = \left[\smat{ &1& \\0&1&1}\right] + \left[\smat{ &0& \\1&1&1}\right] + \left[\smat{ &1& \\1&1&0}\right] - \left[\smat{ &1& \\1&1&1}\right], \\
      \dgm^\Int([X_2]) & = \left[\smat{ &0& \\0&1&1}\right] + \left[\smat{ &1& \\0&1&0}\right] + \left[\smat{ &0& \\1&1&0}\right] - \left[\smat{ &0& \\0&1&0}\right],
    \end{aligned}
  \]
  which are pairwise unequal.  

  See \cite[Example~2.6(2)]{aoki2023summand} for   
  the details of computing
  $\beta_0^\Int$ and $\chi^\Int$ using the information of the Auslander-Reiten quiver
  (for a representation-finite algebra).
  On the other hand, we compute $\dgm^\Int$   
  by computing the generalized rank invariant and then computing its M\"obius inversion (cf.~Example~\ref{example:gpd}).  
\end{proof}

\begin{theorem}
  \label{thm:classifying_interval-barcoding_invariants}  
  For any finite poset $P$ for which  
  the poset $D_{1,2}$ or $D_{0,3}$ of Lemma~\ref{lem:D4}
  is embedded as a convex full subposet,
  the following pairs of invariants are incomparable.
  
  \begin{enumerate}[label=(\roman*)]
  \item $\dgm^\Int$ and $\chi^\Int$.
    \label{item:classifying_interval-barcoding_invariants-dgm_and_chi}
  \item $\dgm^\Int$ and $\beta_0^\Int$.
    \label{item:classifying_interval-barcoding_invariants-dgm_and_betti_zero}
  \end{enumerate}

  For any finite poset $P$ such that
  there exists at least one indecomposable representation of $P$
  that is not isomorphic to an interval representation,  
  the following pairs of invariants  are incomparable.
  \begin{enumerate}[label=(\roman*),resume]
  \item $\pi^{\Int}$ and $\dgm^\Int$.
      \label{item:classifying_interval-barcoding_invariants-dgm_and_interval_multiplicity}
      
  \item $\pi^\Int$ and $\chi^\Int$.
    \label{item:classifying_interval-barcoding_invariants-interval multiplicity_and_chi}
  \item $\pi^\Int$ and $\beta_0^\Int$.
    \label{item:classifying_interval-barcoding_invariants-interval_multiplicity_and_betti_zero}
  \item $\chi^\Int$ and $\beta_0^\Int$.
    \label{item:classifying_interval-barcoding_invariants-chi_and_betti_zero}
  \end{enumerate}  
\end{theorem}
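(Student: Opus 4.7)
The plan is to apply Theorem~\ref{theorem:comparing}: since each of $\pi^\Int$, $\dgm^\Int$, $\chi^\Int$, and $\beta_0^\Int$ is interval-barcoding---each one fixes interval representations---and all take values in $\Z^{\Int(P)} \cong \ksp(\cC)$, Theorem~\ref{theorem:comparing} forces each pair to be either equal as functions or incomparable. Hence for each of the six claims, it suffices to produce a single element $x \in \ksp(\cD)$ at which the two invariants take different values.

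I would first handle items (iv)--(vi), using only the given non-interval indecomposable $W$. Three basic evaluations drive these cases: (a) $\pi^\Int([W]) = 0$, since $W$ has no interval direct summand; (b) $\beta_0^\Int([W]) = [I_0] \neq 0$, where $I_0$ is the nonzero first term of the minimal interval resolution of $W$; and (c) $\chi^\Int([W]) \neq 0$, obtained via the equivalence $\chi^\Int \sim \dimhom_{\cD}^\Int$ of Proposition~\ref{proposition:dimhomandres} together with the Yoneda identity $\dim \Hom(k_{p^\uparrow}, W) = \dim W(p) > 0$ at any point $p$ where $W$ is nonzero. These immediately settle (iv) and (v) by taking $x = [W]$. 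For (vi), I take $x := [W] - [I_0]$: then $\beta_0^\Int(x) = [I_0] - [I_0] = 0$, while $\dimhom_{\cD}^\Int(x)(k_{p^\uparrow}) = \dim W(p) - \dim I_0(p)$ is strictly negative at some $p$---otherwise the surjective interval cover $I_0 \to W$ would be a pointwise isomorphism, hence an isomorphism, forcing $W$ to be interval-decomposable. So $\chi^\Int(x) \neq 0$, proving (vi).

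For items (i)--(iii), I would invoke the classical failure of completeness of the rank invariant on the $2\times 3$ commutative grid: there exist a non-interval indecomposable $N$ supported on the embedded grid and interval representations $J_1, J_2$ with $\rk_N = \rk_{J_1 \oplus J_2}$. Since $\dgm^\Int \sim \rk^\Int$ (Example~\ref{example:gpd}) and $\dgm^\Int$ is interval-barcoding, this gives $\dgm^\Int([N]) = [J_1] + [J_2]$. Setting $x := [N] - [J_1] - [J_2]$, we have $\dgm^\Int(x) = 0$. For (i), one finds $\pi^\Int(x) = -[J_1] - [J_2] \neq 0$. For (ii) and (iii), I would explicitly determine the minimal interval resolution of $N$ on the $2\times 3$ grid to obtain $\chi^\Int([N]) = \sum_i (-1)^i [L_i]$ and $\beta_0^\Int([N]) = [L_0]$, and verify that $\chi^\Int([N]) \neq [J_1] + [J_2]$ and $\beta_0^\Int([N]) \neq [J_1] + [J_2]$; with the same $x$ this yields $\chi^\Int(x) \neq 0$ and $\beta_0^\Int(x) \neq 0$ respectively.

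The main obstacle is the explicit computation of the minimal interval resolution of the classical non-interval indecomposable on the $2\times 3$ grid needed for items (ii) and (iii). This is a finite-dimensional linear-algebra exercise, and the expected values of $\chi^\Int([N])$ and $\beta_0^\Int([N])$ generically differ from the two-term multiset $[J_1] + [J_2]$ predicted by the generalized persistence diagram; making this verification explicit for the standard example then completes the proof. All other steps are mechanical consequences of additivity on direct sums and of the discrete-poset structure on interval-barcoding invariants established in Section~\ref{sec:resultscomparison}.
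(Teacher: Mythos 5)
Your reduction via Theorem~\ref{theorem:comparing} and Corollary~\ref{corollary:comparingbarcodelike} to exhibiting a single separating element is exactly the paper's strategy, and your handling of items (iv)--(vi) is correct. For (iv)--(v) you use essentially the paper's witness. For (vi) your route is actually a genuine simplification: the paper must first argue for the existence of some $X$ with interval dimension \emph{exactly} $1$ (via a syzygy argument and global interval dimension), precisely so that the alternating sum $\chi^\Int(z)=\beta_1^\Int([X])$ has no cancellation. You sidestep that entirely by passing to $\dimhom^\Int\sim\chi^\Int$ and reading off $\dim W(p)-\dim I_0(p)$ pointwise from the Yoneda identity; the surjectivity of the interval cover forces negativity somewhere unless $W\cong I_0$. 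That is cleaner and works for any non-interval indecomposable $W$.

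For items (i)--(iii), however, there is a genuine gap. You invoke ``the classical failure of completeness of the rank invariant'' to claim a \emph{non-interval indecomposable} $N$ and intervals $J_1,J_2$ with $\rk_N=\rk_{J_1\oplus J_2}$, and then conclude $\dgm^\Int([N])=[J_1]+[J_2]$ using $\dgm^\Int\sim\rk^\Int$. But $\rk$ (ranks over segments) and $\rk^\Int$ (ranks over \emph{all} intervals) are not the same invariant: agreement of $\rk$ on $N$ and $J_1\oplus J_2$ does not imply agreement of $\rk^\Int$, and it is the latter that controls $\dgm^\Int$. Indeed, on the $2\times 3$ grid the unique non-interval indecomposable $M$ does \emph{not} have $\rk^\Int_M$ equal to that of any interval-decomposable module; this is why the paper's Example~\ref{example:dgm_and_dimhom} instead takes a \emph{decomposable} $N=M\oplus\smat{1&0&0\\1&1&0}$ and an interval-decomposable $L$ that is a direct sum of \emph{three} intervals, chosen specifically so that $\dgm^\Int([N])=[L]=\dgm^\Int([L])$. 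Your $x:=[N]-[J_1]-[J_2]$ is therefore not shown to lie in $\ker\dgm^\Int$. Beyond this, the verifications you flag as ``a finite-dimensional linear-algebra exercise'' for (ii)--(iii)---computing $\chi^\Int([N])$ and $\beta_0^\Int([N])$ and checking they differ from the target---are precisely the hard content of those items and cannot be waved away as generic; the paper carries them out concretely in Examples~\ref{example:dgm_and_dimhom} and~\ref{ex:use_of_flipping_map}, and in fact for (iii) it switches the direction and shows $\beta_0^\Int\not\gtrsim\dgm^\Int$ using the explicit $M$ and $Y_-$ produced by the flipping map.
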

See Figure \ref{fig:poset_of_interval-barcoding_invariants} for an illustration.
We emphasize that for any incomparable pair $(f,g)$ of invariants in the preceding theorem, by Theorem~\ref{theorem:isokernel}, there exists the transfer isomorphism $\ker f\to \ker g$.
\begin{figure}
  \centering
  \includegraphics[width=0.88\textwidth]{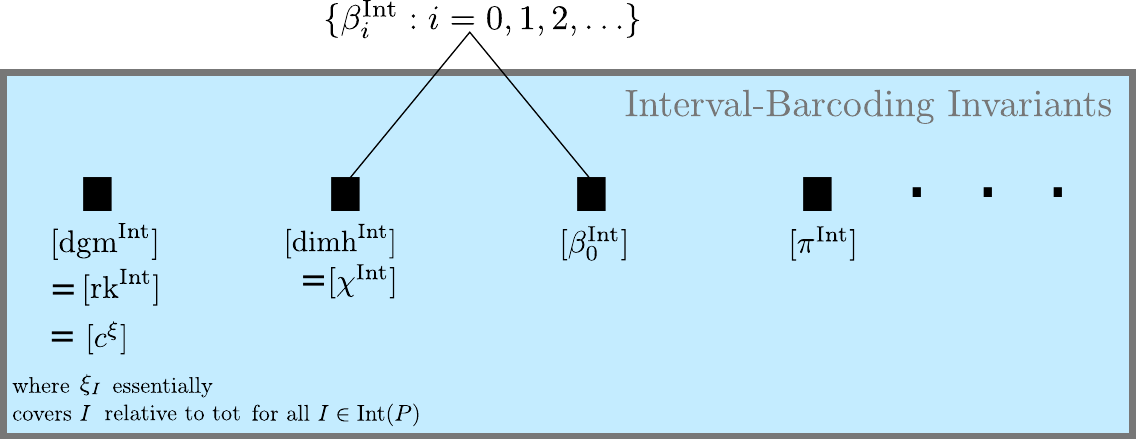}
  \caption{Illustration for Theorem \ref{thm:classifying_interval-barcoding_invariants},
    for any finite poset $P$ for which    
    the poset $D_{1,2}$ or $D_{0,3}$ of Lemma~\ref{lem:D4}
    is embedded as a convex full subposet.
    Inside the box is the Hasse diagram of the poset of equivalence classes of interval-barcoding invariants on $P$. The collection of interval Betti numbers $\{\beta_i^\Int\}_{i=0}^\infty$ is an invariant on $\fprepk P$ which determines both $\chi^\Int$ and $\beta_0^\Int$.
    The equivalence $\rk^\Int\sim\dgm^\Int$ follows from the definition of $\dgm^\Int$
    (cf. Example \ref{example:gpd}) and Corollary~\ref{cor:Mobius_does_not_affect_kernel}~\ref{item:Mobius_does_not_affect_kernel2}. 
    By Remark \ref{rem:essentially_covers},
    the equivalence class of $\rk^\Int\sim\dgm^\Int$
    also contains all compressed multiplicity invariants $c^\xi$
    with $\xi$ satisfying the condition that
    $\xi_I$ essentially covers $I$ relative to $\tot$ for all $I\in \Int(P)$,
    because under this condition $c^\xi = \rk^\Int$.}
    \label{fig:poset_of_interval-barcoding_invariants}
\end{figure}
\begin{proof}
  The four invariants ($\pi^{\Int}$, $\dgm^\Int$, $\chi^\Int$, and $\beta_0^\Int$)
  under consideration are interval-barcoding.
  Hence, by Theorem~\ref{theorem:comparing}, 
  in order to prove that any two of them
  are incomparable, it suffices to show that they are unequal.

  Let $P$ be any finite poset for which  
  $D = D_{1,2}$ or $D = D_{0,3}$ as defined in Lemma~\ref{lem:D4}
  is embedded as a convex full subposet.  
    

\noindent \ref{item:classifying_interval-barcoding_invariants-dgm_and_chi},
  \ref{item:classifying_interval-barcoding_invariants-dgm_and_betti_zero}:
By Lemma~\ref{lem:D4}, $\dgm_D^\Int$, $\chi_D^\Int$, $\beta_{0,D}^\Int$ are pairwise unequal,
  which implies that $\dgm^\Int$, $\chi^\Int$, $\beta_0^\Int$ are pairwise unequal by
  Lemma~\ref{lem:fullconvex}~\ref{lem:paddingintervalmult}.



Below, let $P$ be a finite poset such that there exists at least one indecomposable representation of $P$
that is not isomorphic to an interval representation.

\noindent
\ref{item:classifying_interval-barcoding_invariants-dgm_and_interval_multiplicity},
\ref{item:classifying_interval-barcoding_invariants-interval multiplicity_and_chi},
\ref{item:classifying_interval-barcoding_invariants-interval_multiplicity_and_betti_zero}:
Let $X\neq 0$ be any non-interval indecomposable representation of $P$.
Then, $\pi^\Int([X]) = 0$.
On the other hand, we will show that
$\dgm^\Int([X])$, $\chi^\Int([X])$, and $\beta^\Int_0([X])$ are nonzero.
First, suppose that $\dgm^\Int([X]) = 0$.
Then, since $\dgm^\Int$ is defined as the M\"obius inversion of $\rk^\Int$,
we have that $\rk^\Int([X]) = 0$, a contradiction.
Similarly, $\chi^\Int([X]) = 0$ implies that $\dimhom^\Int([X]) = 0$ by Proposition~\ref{proposition:dimhomandres}, a contradiction.
Finally, $\beta^\Int_0([X]) \neq 0$ follows from the fact that
an interval cover of $X$ provides a surjection from an interval decomposable module to $X\neq 0$,
and thus the interval cover must be nonzero.

\noindent \ref{item:classifying_interval-barcoding_invariants-chi_and_betti_zero}:
First, assume that there exists a representation $X$ of $P$ with interval dimension $1$,
and let $z := [X] - \beta^\Int_0([X])$. Then, $\beta^\Int_0(z) = 0$,
while 
\begin{align*}
    \res(z)
    & = \res([X]) - \res(\beta^\Int_0([X])) \\
    & = \sum_{i=0}^\infty (-1)^i \beta^\Int_i([X]) - \beta^\Int_0([X]) \\
    & = \sum_{i=1}^\infty (-1)^i \beta^\Int_i([X]) \\
    & = \beta^\Int_1([X]) \neq 0
\end{align*}
where the last equality and last inequality
both follow from the assumption that $X$ has interval dimension $1$.
Thus, under the assumption of the existence of such an $X$,
we have $\beta^\Int_0 \neq \res$, as needed.
Next, we show that there indeed exists such a representation $X$.
We note that the hypothesis on $P$ is equivalent
to assuming that $P$ has nonzero global interval dimension $d$,
which is finite by Proposition~\ref{prop:finite_interval_dimension}.
Then, there exists a representation $M$ with interval dimension equal to $d$,
and $\Omega^{d-1}(M)$ has interval dimension $1$
(recall the construction in Section~\ref{subsec:relhom_invariants}).
\end{proof}

Recall that the $m\times n$ \emph{commutative grid} is
the product of two totally ordered sets $\{1 < 2 < \hdots < m\} \times \{1 < 2 < \hdots < n\}$.  
The following is a specialization of Theorem~\ref{thm:classifying_interval-barcoding_invariants}
to the commutative grids.
\begin{corollary}
  \label{cor:commgrid}
  Let $P$ be the $m\times n$ commutative grid with $m \geq 2$ and $n \geq 3$
  (or symmetrically $m \geq 3$ and $n \geq 2$).  
  The interval-barcoding invariants
  \[
    \dgm^\Int, \chi^\Int, \beta_0^\Int, \pi^\Int
  \]
  are pairwise incomparable.  
\end{corollary}
\begin{proof}  
  The poset $P$ contains the poset $D_{1,2}$ as a convex full subposet, and 
  there exists a non-interval indecomposable representation of $P$. Hence, the conclusion follows from Theorem~\ref{thm:classifying_interval-barcoding_invariants}.
\end{proof}

\begin{remark}
  We remark that \cite[Corollary~7.10]{blanchette2024homological}
  is closely related to 
  Theorem~\ref{thm:classifying_interval-barcoding_invariants}~\ref{item:classifying_interval-barcoding_invariants-dgm_and_chi} (under the more restrictive hypothesis of having the $2\times 3$ commutative grid embedded as a convex full subposet),
  but the meaning of \textbf{embed} in \cite[Corollary~7.10]{blanchette2024homological} is not clear.
  Therefore, we presented above a proof of Theorem~\ref{thm:classifying_interval-barcoding_invariants}~\ref{item:classifying_interval-barcoding_invariants-dgm_and_chi}, which makes use of Lemma~\ref{lem:fullconvex}.
\end{remark}

Interestingly, the condition that
there exists a non-interval indecomposable representation of $P$
is not sufficient to guarantee that $\dgm^\Int$ and $\chi^\Int$ are incomparable.
The following lemma exhibits two such posets $P$.

\begin{lemma}
  \label{lem:D4equal}
  For the posets $D_{2,1}$ and $D_{3,0}$ given by the following Hasse diagrams, 
  the invariants $\dgm^\Int$ and $\chi^\Int$ are equal.
  \begin{center}  
    $D_{2,1}:
    \begin{tikzpicture}[baseline=4mm]      
      \node (a) at (0,1) {$1$};
      \node (b) at(-1,0) {$2$}; 
      \node (c) at(0,0) {$3$};
      \node (d) at(1,0) {$4$}; 
      \draw[->] (c)--(a); 
      \draw[->] (b)--(c); 
      \draw[->] (d)--(c);
    \end{tikzpicture}$
    \hspace{10mm}    
    $D_{3,0}:
    \begin{tikzpicture}[baseline=4mm]     
      \node (a) at (0,1) {$1$};
      \node (b) at(-1,0) {$2$}; 
      \node (c) at(0,0) {$3$};
      \node (d) at(1,0) {$4$}; 
      \draw[->] (a)--(c); 
      \draw[->] (b)--(c); 
      \draw[->] (d)--(c);
    \end{tikzpicture}
    $  
  \end{center}
\end{lemma}
Note that these posets are the opposite posets of the ones appearing in Lemma~\ref{lem:D4}.
\begin{proof}
Up to isomorphism, there exists exactly one non-interval indecomposable representation $X_{3}$ of $D_{2,1}$
(respectively, $X_4$ for $D_{3,0}$).\footnote{
    Note that $D_{2,1}$ and $D_{3,0}$ also have underlying graph the Dynkin graph $\mathbb{D}_4$.
    The same argument in the footnote concerning $D_{1,2}$ and $D_{0,3}$ in Lemma~\ref{lem:D4} 
    also holds here.
}
Both $X_3$ and $X_4$ have the same dimension vector $\smat{ &1& \\1&2&1}$.
To see the equality of the invariants,
since both invariants are interval-barcoding, it suffices to check the equality on $X_3$ and on $X_4$ respectively.
A direct computation shows that
\[    
  \chi^\Int([X_3])  = \left[\smat{ &1& \\0&1&1}\right] + \left[\smat{ &0& \\0&1&0}\right] + \left[\smat{ &1& \\1&1&0}\right] - \left[\smat{ &1& \\0&1&0}\right] = 
  \dgm^\Int([X_3])    
\]
and
\[
  \chi^\Int([X_4]) = \left[\smat{ &0& \\0&1&1}\right] + \left[\smat{ &1& \\0&1&0}\right] + \left[\smat{ &0& \\1&1&0}\right] - \left[\smat{ &0& \\0&1&0}\right] = 
  \dgm^\Int([X_4]).
\]
\end{proof}

Theorem~\ref{thm:classifying_interval-barcoding_invariants}
and Lemma~\ref{lem:D4equal} lead to the following question.
\begin{question}
  Characterize all finite posets $P$ over which the four invariants
  $\dgm^\Int, \chi^\Int, \beta_0^\Int, \pi^\Int$
  are pairwise incomparable.
\end{question}

For the incomparable invariants given in Corollary~\ref{cor:commgrid}, the following example demonstrates their incomparability through specific pairs of representations.
\begin{example}
  \label{example:dgm_and_dimhom}
  Consider the $k$-representation $M$ of the $2\times 3$ commutative grid $G$ given as:
  \[
    M:
    \begin{tikzcd}[ampersand replacement=\&]
      \bfield \arrow[r,"1"] \&
      \bfield \arrow[r] \&
      0
      \\
      \bfield \arrow[u,"1"]\arrow[r,"\smat{1\\1}"] \&
      \bfield^2 \arrow[r,"\smat{0 & 1}"] \arrow[u,"\smat{ 1 & 0}"] \&
      \bfield\arrow[u,"1"]
    \end{tikzcd}
  \]
  whose dimension vector is $\smat{1 & 1 & 0 \\ 1 & 2 & 1}$.
  Also, consider the representations of $G$
  \[
    N := M \oplus \smat{1 & 0 & 0 \\ 1 & 1 & 0}
    \text{ and }
    L :=
    \smat{1 & 0 & 0 \\ 1 & 1 & 1} \oplus
    \smat{1 & 1 & 0 \\ 1 & 1 & 0} \oplus
    \smat{0 & 0 & 0 \\ 0 & 1 & 0}
  \]
  where the dimension vectors stand for the corresponding interval representations.
  \begin{enumerate}
  \item The generalized persistence diagram   
    $\dgm^{\Int}$ 
    cannot distinguish $N$ and $L$,
    whereas the dim-hom invariant    
    $\dimhom^\Int$
    can, as noted in \cite[Proposition~7.8]{blanchette2024homological}.
    Namely,  
    \begin{equation}\label{eq:dgm_coincidence}
      \dgm^\Int([N]) = [L] = \dgm^\Int([L]),
    \end{equation}whereas, 
    for the interval $I = \smat{1 & 0 & 0 \\ 1 & 1 & 0}$,
    \begin{equation}\label{eq:dimhom_noncoincidence}
      \dimhom^\Int([N])(I) = 1 \neq 0 = \dimhom^\Int([L])(I).    
    \end{equation}   
    Thus by Corollary~\ref{corollary:comparingbarcodelike}, $\dgm^\Int$ and $\dimhom^\Int$ are incomparable. Then, since $\dimhom^\Int\sim \chi^{\Int}$ (cf. Proposition~\ref{proposition:dimhomandres}),  $\dgm^\Int$ and $\chi^\Int$ are incomparable.
    
  \item It is also clear that the interval multiplicity $\pi^\Int$ can distinguish $N$ and $L$. For example, 
  \[
    \pi^\Int([N])(I) = 1 \neq 0 = \pi^\Int([L])(I).    
  \]
  Thus, $\dgm^\Int$ and $\pi^\Int$ are incomparable by Theorem~\ref{theorem:comparing}.
\end{enumerate}
\end{example}

Next, we illustrate the use of the
transfer isomorphism
to generate a pair of representations that can be distinguished by one invariant, $f$, but not by another, $g$, starting from a pair that is distinguished by $g$ but not by $f$.
\begin{example}\label{ex:use_of_flipping_map}
  Consider $N$ and $L$ in the previous example. Let $x:= [N]-[L] \in \ksp(\fprepk P)$.
  By Equations~(\ref{eq:dgm_coincidence})~and~(\ref{eq:dimhom_noncoincidence}),
  and the fact that $\dimhom^\Int \sim \chi^\Int$, we have
  \[
    x \in \ker(\dgm^\Int) \text{ and } x \not \in \ker(\res).
  \]  
  As we showed in the proof of Theorem~\ref{theorem:isokernel},
  for $y := x - \res(x)$,  
  we have
  \[
    y \not\in \ker(\dgm^\Int) \text{ and } y \in \ker(\res).
  \]
  By Lemma \ref{lemma:invariant_kernel}~\ref{lemma:invariantrelations:num:kerinterpret},
  the positive part $Y_+$ and the negative part $Y_-$ of $y$ can be distinguished by $\dgm^\Int$, but not by $\res$. While we know this without explicitly computing $y$, let us compute $y$ nonetheless. First, note that $L$ is interval decomposable and thus $\res([L])=[L]$. Also, for the representation $M$ from the previous example, a direct computation gives
  the interval resolution of $M$ as:
  \[
    0\longrightarrow \smat{1 & 1 & 0\\ 0& 1 & 1}\longrightarrow \smat{0 & 0 & 0 \\ 0 & 1 & 1}
    \oplus \smat{1 & 1 & 0\\ 0& 1 & 0}
    \oplus \smat{1 & 1 & 0\\ 1 & 1 & 1} \stackrel{f}{\longrightarrow} M \longrightarrow 0,
  \]
  implying that $\res(M)=
  \left[\smat{0 & 0 & 0 \\ 0 & 1 & 1}\right]
  + \left[\smat{1 & 1 & 0 \\ 0 & 1 & 0}\right]
  + \left[\smat{1 & 1 & 0 \\ 1 & 1 & 1}\right]-\left[\smat{1 & 1 & 0\\ 0& 1 & 1}\right]$.
  
  Since $N=M\oplus \smat{1 & 0 & 0 \\ 1 & 1 & 0}$, we have $x=[N]-[L]=[M]+\left[\smat{1 & 0 & 0 \\ 1 & 1 & 0}\right]-[L]$, and thus additivity of $\res$ implies:
  \begin{align*}
    y
    & = x - \res(x) \\
    & = \left([M] + \left[\smat{1 & 0 & 0 \\ 1 & 1 & 0}\right] - [L]\right)
                                                     - \left(\res([M]) + \res\left(\left[\smat{1 & 0 & 0 \\ 1 & 1 & 0}\right]\right) - \res([L])\right) \\
    & = [M] - \res([M]) 
    \\
    & = [M] + \left[\smat{1 & 1 & 0\\ 0& 1 & 1}\right]
                                             - \left[\smat{0 & 0 & 0 \\ 0 & 1 & 1}\right]
                                                                                - \left[\smat{1 & 1 & 0 \\ 0 & 1 & 0}\right]
                                                                                                                   - \left[\smat{1 & 1 & 0 \\ 1 & 1 & 1}\right],
  \end{align*} 
  where the third equality follows from $\res([L])=[L]$ and $\res\left(\left[\smat{1 & 0 & 0 \\ 1 & 1 & 0}\right]\right)=\left[\smat{1 & 0 & 0 \\ 1 & 1 & 0}\right]$.
  Hence,
  \[
    Y_+ = M \oplus \smat{1 & 1 & 0\\ 0& 1 & 1}
    \text{ and }
    Y_- = \smat{0 & 0 & 0 \\ 0 & 1 & 1}
    \oplus \smat{1 & 1 & 0\\ 0& 1 & 0}
    \oplus \smat{1 & 1 & 0\\ 1 & 1 & 1}
  \]
  are not distinguishable by $\res$.
\end{example}

Next we show that, among homological invariants 
with respect to a set of indecomposables containing all the intervals
and with finite relative global dimension,
the interval Euler characteristic has the weakest discriminating power.

\begin{theorem}[Hierarchy of homological invariants]\label{thm:hierarchy}
  Let $P$ be a finite poset and let $\cD:= \pfdrep P$. Let $\Int(P)\subseteq Q_1\subsetneq Q_2\subseteq \ind(\cD)$ and  $\cC_1:=\add Q_1$, and $\cC_2:=\add Q_2$ such that the global $\cC_1$-dimension and global $\cC_2$-dimension are both finite.
  Then, any homological invariant relative to $Q_2$ is strictly finer than any homological invariant relative to $Q_1$.
\end{theorem}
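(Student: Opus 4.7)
My plan is to reduce to the canonical Euler-characteristic invariants $\chi^{Q_i}$ (the natural generalization of $\chi^{\Int} = \res$, obtained by replacing the minimal interval resolution in Example~\ref{ex:interval_euler} with the minimal $\cC_i$-resolution) and then transfer the comparison to the other homological invariants in each class via equivalences. For $\chi^{Q_i}$ itself, the argument has two parts: a direct kernel containment $\ker\chi^{Q_2}\subseteq \ker\chi^{Q_1}$ via a splicing construction, and strictness via Theorem~\ref{theorem:nobigger}.

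The finer direction uses a splicing argument. Given $M\in\cD$, take its minimal $\cC_2$-resolution
\[
0\to I_\ell \to \cdots\to I_0 \to M \to 0.
\]
By the finite global $\cC_1$-dimension hypothesis, each $I_j\in\cC_2$ admits a minimal $\cC_1$-resolution, and a standard horseshoe-style splice produces a (not necessarily minimal) $\cC_1$-resolution of $M$. Since any two finite $\cC_1$-resolutions of the same object differ by direct summands of contractible complexes, which contribute $0$ to the alternating sum, the Euler characteristic is well-defined for any such resolution, giving
\[
\chi^{Q_1}([M]) \;=\; \sum_{j=0}^{\ell} (-1)^{j}\, \chi^{Q_1}([I_j]).
\]
Defining $\phi:\ksp(\cC_2)\to \ksp(\cC_1)$ as the $\Z$-linear extension of $[I]\mapsto \chi^{Q_1}([I])$ for $[I]\in \ind(\cC_2)$, this identity reads $\chi^{Q_1} = \phi\circ \chi^{Q_2}$, which yields $\ker \chi^{Q_2} \subseteq \ker \chi^{Q_1}$.

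Strictness is then immediate from Theorem~\ref{theorem:nobigger}: $\chi^{Q_2}$ is $\cC_2$-barcoding and thus $\cC_2$-complete, whereas $\chi^{Q_1}$ is $\cC_1$-barcoding-equivalent and therefore fails to be $\cC_2$-complete. Pick a nonzero $x\in \ksp(\cC_2)$ with $\chi^{Q_1}(x)=0$; completeness of $\chi^{Q_2}$ forces $\chi^{Q_2}(x)\ne 0$, so $\chi^{Q_1}\not\gtrsim \chi^{Q_2}$, giving $\chi^{Q_2}\gnsim \chi^{Q_1}$.

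The main obstacle I anticipate is extending this comparison to \emph{any} homological invariant relative to $Q_i$, not just the Euler characteristic. The natural generalization of Proposition~\ref{proposition:dimhomandres} yields $\chi^{Q_i}\sim \dimhom_\cD^{Q_i}$, covering the dim-hom case directly. For other $\cC_i$-barcoding-equivalent homological invariants such as $\beta_0^{\cC_i}$ or the full collection $\{\beta_j^{\cC_i}\}_j$, Corollary~\ref{cor:comparison} narrows the possibilities to either incomparable or strictly finer, and incomparability can be ruled out by applying a splicing argument tailored to the relevant Betti number in place of the full alternating sum.
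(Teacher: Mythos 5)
Your core argument for the Euler-characteristic invariants is a workable alternative route to the paper's, but it diverges significantly and your closing paragraph contains a genuine error about the scope of ``homological invariant.''

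The paper's proof is shorter because it outsources the hard part to \cite[Proposition~4.13]{blanchette2024homological}: every homological invariant relative to $Q_i$ is equivalent to $\dimhom_\cD^{Q_i}$. Once that equivalence is in hand, $\dimhom_\cD^{Q_2}\gtrsim \dimhom_\cD^{Q_1}$ is immediate (the latter is a coordinate projection of the former since $Q_1\subseteq Q_2$), and strictness comes from Corollary~\ref{cor:comparison}. Your splicing construction of $\phi:\ksp(\cC_2)\to\ksp(\cC_1)$ and the identity $\chi^{Q_1}=\phi\circ\chi^{Q_2}$ is a legitimate replacement for the first step, and your use of Theorem~\ref{theorem:nobigger} for strictness is essentially the same as the paper's use of Corollary~\ref{cor:comparison}. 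So for the comparison of the two Euler characteristics, fine -- though you should be more careful that the spliced complex is actually a $\cC_1$-resolution in the relevant exact structure $\mathcal{F}_{\cC_1}$, and that the ``contractible summands'' argument is being run inside that exact category.

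The genuine gap is in your last paragraph. The term ``homological invariant relative to $Q_i$'' in this theorem is exactly the class for which \cite[Proposition~4.13]{blanchette2024homological} gives the equivalence to $\dimhom_\cD^{Q_i}$; it does \emph{not} include $\beta_0^{\cC_i}$ or the full tuple $\{\beta_j^{\cC_i}\}_j$. Indeed, the paper's own Theorem~\ref{thm:classifying_interval-barcoding_invariants}\ref{item:classifying_interval-barcoding_invariants-chi_and_betti_zero} shows $\chi^{\Int}$ and $\beta_0^{\Int}$ are \emph{incomparable} whenever $P$ has a non-interval indecomposable, so $\beta_0^{\Int}$ cannot be a ``homological invariant relative to $\Int(P)$'' in the intended sense. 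Consequently no ``splicing argument tailored to the relevant Betti number'' can rescue a statement that is false at that level of generality; the theorem's scope is precisely what makes the reduction to $\dimhom$ available, and your plan to extend beyond that scope breaks.
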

\begin{proof}
  Let $f$ be a homological invariant relative to $Q_1$,
  and let $g$ be a homological invariant relative to $Q_2$.
  By definition, $f \sim \chi^{Q_1}$ which means that $f$ is a $\cC_1$-barcoding-equivalent invariant.
  Likewise, $g \sim \chi^{Q_2}$ is a $\cC_2$-barcoding-equivalent invariant.
  We then have
  \begin{align*}
    g&\sim \dimhom_{\cD}^{Q_2}&\mbox{by \cite[Theorem~4.22]{blanchette2024homological}}
    \\&\gtrsim \dimhom_{\cD}^{Q_1}&\mbox{by definition}
    \\&\sim f &\mbox{by \cite[Theorem~4.22]{blanchette2024homological}}
  \end{align*}
  By Corollary \ref{cor:comparison}, $g\gtrsim f$ implies that $g\gnsim f$, completing the proof.
\end{proof}

\subsection{Applications to interval-barcoding-equivalent invariants
  for representations of infinite posets}

In the previous section, we dealt with barcoding-equivalent invariants for representations of \emph{finite} posets.
We now turn to barcoding-equivalent invariants for representations of
\emph{infinite} posets $P$.

First, we consider the setting 
$P=\R^2$
and $\cD = \fprepk P$.
Here,
$\dgm^\Int$ is defined \cite[Definition~3.1 and Theorem~C (iii)]{clause2022discriminating}, 
finitely supported \cite[Proposition~3.17]{clause2022discriminating},
additive (see the proof of Theorem \ref{thm:uniqueness} below),
fixes intervals (Theorem \ref{thm:dgm_generalize_the_barcode_and_completeness}~\ref{item:dgm_generalize_the_barcode}),  
and is thus a barcoding-invariant (cf. Remark \ref{footnote}).

\begin{remark}
  By definition, any $\cC$-barcoding invariant
  $f: \ksp(\cD) \rightarrow \ksp(\cC)$
  has codomain $\ksp(\cC) =\Z^{(\ind(C))}$.
  This implies that the value $f([M])$, viewed as a function on $\ind(\cC)$,  
  must be finitely supported for each $M \in \cD$.
  This is not necessarily the case for barcoding-\emph{equivalent} invariants.
  For example, when $P=\R^2$ and  $\cD = \fprepk P$,
  the generalized rank invariant (Example~\ref{ex:GRI})
  is not finitely supported, but is equivalent to 
  $\dgm^\Int$ which is is finitely supported and barcoding.  
\end{remark}

Next, we obtain a new characterization of $\dgm^\Int$ that does not invoke M\"obius inversion. 

\begin{theorem}
[Universality of the generalized persistence diagram]\label{thm:uniqueness} 
Let $P$ be either $\R$, $\R^2$, or a finite poset, and $\cD:=\fprepk P$.
The invariant $\dgm^\Int$ is the \emph{unique} additive invariant on $\cD$ that is equivalent to the generalized rank invariant $\rk^\Int$ and fixes intervals.
\end{theorem}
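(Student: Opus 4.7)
The plan is to recognize the hypothesis ``fixes intervals'' as making $f$ a $\cC$-barcoding invariant in the sense of Definition \ref{definition:barcodelike}, where $\cC := \add Q$ with $Q$ the isoclasses of interval representations in $\cD$. Once both $f$ and $\dgm^\Int$ are known to be $\cC$-barcoding and equivalent, Theorem \ref{theorem:comparing} immediately forces $f = \dgm^\Int$.

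First, I would confirm that $\dgm^\Int$ itself is $\cC$-barcoding and equivalent to $\rk^\Int$ in each of the settings. For the finite poset subcase of (ii), this is recorded in Example \ref{example:gpd} together with Example \ref{example:compressionbasedinvariants}~\ref{ex:gpd_b}. For setting (i) with $P = \R$ and for the $P = \R^d$ part of setting (ii), the generalized construction of $\dgm^\Int$ recalled in Remark \ref{footnote} (see \cite[Section~3]{clause2022discriminating}) still produces an additive invariant $\ksp(\cD) \to \ksp(\cC)$ equivalent to $\rk^\Int$; moreover Theorem \ref{thm:dgm_generalize_the_barcode_and_completeness}~\ref{item:dgm_generalize_the_barcode} gives $\dgm^\Int([\bfield_I]) = [\bfield_I]$ for each interval $I$, and additivity then extends this to a fixing of every element of $\ksp(\cC)$.

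Next, let $f$ be any additive invariant on $\cD$ that is equivalent to $\rk^\Int$ and fixes intervals. Interpreting ``fixes intervals'' as $f([\bfield_I]) = [\bfield_I] \in \ksp(\cC)$ for every interval $I$, the codomain of $f$ may be taken to be $\ksp(\cC)$, and additivity together with Lemma \ref{lemma:kspbasis} makes $f$ a bona fide $\cC$-barcoding invariant. Since $f \sim \rk^\Int \sim \dgm^\Int$, the invariants $f$ and $\dgm^\Int$ are comparable $\cC$-barcoding invariants, and Theorem \ref{theorem:comparing} then yields $f = \dgm^\Int$.

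The main technical obstacle lies in the infinite poset subcases: one has to verify that the generalized $\dgm^\Int$ really is a $\cC$-barcoding invariant equivalent to $\rk^\Int$ when the ordinary incidence-algebra M\"obius inversion over $(\Int(P), \supseteq)$ is unavailable due to the absence of finite principal ideals, which requires invoking the more delicate construction of \cite[Section~3]{clause2022discriminating}. With this background in place, the uniqueness assertion reduces to essentially a single application of Theorem \ref{theorem:comparing}, nicely illustrating the rigidity of the poset of barcoding invariants developed earlier in the paper.
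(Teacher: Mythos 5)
Your proposal is correct and follows essentially the same route as the paper's proof: show that $\dgm^\Int$ is an additive, interval-fixing (hence $\cC$-barcoding) invariant equivalent to $\rk^\Int$, then conclude uniqueness by applying Theorem~\ref{theorem:comparing} since $f \sim \rk^\Int \sim \dgm^\Int$ forces $f = \dgm^\Int$. The one spot you flag as a ``technical obstacle'' — verifying additivity of the generalized $\dgm^\Int$ in the infinite settings — is exactly where the paper spends its remaining effort, giving a short computation deriving additivity of $\dgm^\Int$ from additivity of $\rk^\Int$ and the identity $\rk^\Int([M])(I) = \sum_{J \supseteq I} \dgm^\Int_M(J)$ from \cite[Definition~3.1 and Theorem~A]{clause2022discriminating}, which is essentially what you would need to fill in.
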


\begin{proof}
  Firstly, we prove that, in either setting, $\dgm^\Int$ is additive.
  Let $M,N$ be any two $P$-representations where
  $\dgm^\Int_M:=\dgm^\Int([M])$ and $\dgm_N^\Int:=\dgm^\Int([N])$ exist.
  Then, for all $I\in \Int(P)$, 
  \begin{align*}
    \rk^\Int([{M\oplus N}])(I) &=\rk^\Int([M])(I)+\rk^\Int([N])(I) \\
                               &=\sum_{\substack{J\supseteq I\\J\in \Int(P)}} \dgm_M^\Int(J)+\sum_{\substack{J\supseteq I\\J\in \Int(P)}} \dgm_N^\Int(J)&\mbox{} \\
                               &=\sum_{\substack{J\supseteq I\\J\in \Int(P)}}(\dgm^\Int_M+\dgm^\Int_N)(J)
  \end{align*}
  where the first equality follows from the additivity of $\rk^\Int$,
  and the second equality from \cite[Definition~3.1~and~Theorem A]{clause2022discriminating}.
  %
  %
  Again by 
  \cite[Definition~3.1~and~Theorem A]{clause2022discriminating},
  we conclude that $\dgm^\Int([M\oplus N])$ exists and coincides with $\dgm_M+\dgm_N$.

  By Theorem \ref{thm:dgm_generalize_the_barcode_and_completeness}~\ref{item:dgm_generalize_the_barcode}, $\dgm^\Int$ fixes intervals.
  Let $f$ be any additive invariant that is
  equivalent to the generalized rank invariant and fixes intervals.
  Then, since $\dgm^\Int\sim \rk^\Int \sim f$,
  we have that $\dgm^\Int \sim f$.
  By Theorem \ref{theorem:comparing}, we have $\dgm^\Int=f$. 
\end{proof}

Let the category $\cD$ be either $\fprepk P$ or $\pfdfdrep P$.
Then, the multiplicity invariant restricted to any set of indecomposables $Q$
(Example~\ref{ex:restricted_multiplicity})
satisfies the finite support condition (since $\cD$ is Krull-Schmidt), is additive, 
and is a barcoding-invariant.

Hence, the previous theorem directly implies:
 
\begin{corollary}
  Let $P$ be either $\R^1$, $\R^2$, or a finite poset, and $\cD:=\fprepk P$. Then, the barcoding invariants $\dgm^\Int$ and $\pi^\Int$  on $\cD$ are incomparable.
\end{corollary}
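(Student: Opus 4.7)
The plan is to combine Corollary \ref{corollary:comparingbarcodelike} with the universality of $\dgm^\Int$ established in Theorem \ref{thm:uniqueness}, thereby reducing incomparability to a direct failure of $\pi^\Int$ to match $\rk^\Int$. Both $\dgm^\Int$ and $\pi^\Int$ are $\cC$-barcoding invariants on $\cD$ with the common basis of interval representations (cf.\ Example \ref{example:compressionbasedinvariants}), so Corollary \ref{corollary:comparingbarcodelike} already narrows the possibilities to ``equivalent or incomparable,'' and it suffices to rule out equivalence.

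I would proceed by contradiction: suppose $\pi^\Int \sim \dgm^\Int$. Since $\dgm^\Int \sim \rk^\Int$ (Example \ref{example:gpd} together with Remark \ref{footnote}), transitivity of $\sim$ yields $\pi^\Int \sim \rk^\Int$. As $\pi^\Int$ fixes interval representations by construction, Theorem \ref{thm:uniqueness} then forces $\pi^\Int = \dgm^\Int$ as functions on $\ksp(\cD)$. To break this equality, I would pick any non-interval indecomposable $M \in \cD$. By definition $\pi^\Int([M]) = 0$. On the other hand, evaluating $\rk^\Int([M])$ at the singleton interval $\{x\}$ for any $x$ in the support of $M$ gives $\dim_{\bfield} M(x) \neq 0$, so $[M] \notin \ker \rk^\Int = \ker \dgm^\Int$, whence $\dgm^\Int([M]) \neq 0$. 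This contradicts $\pi^\Int = \dgm^\Int$, so $\pi^\Int \not\sim \dgm^\Int$, and incomparability follows from Corollary \ref{corollary:comparingbarcodelike}.

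The main obstacle will be producing the required non-interval indecomposable $M \in \cD$ in each of the permitted settings. For $P = \R^d$ with $d \geq 2$, one can import any standard multi-parameter non-interval indecomposable whose support lies in a finite rectangular sub-grid (it is automatically finitely presentable). For a finite poset $P$, the existence of a non-interval indecomposable amounts to $P$ having nonzero global interval dimension, a mild non-degeneracy condition tacit in the statement (otherwise every object of $\cD$ would be interval decomposable and $\pi^\Int$ would coincide with $\dgm^\Int$); posets with only interval indecomposables are precisely the exceptional family classified in \cite[Theorem~5.1]{aoki2023summand}, so outside this family the required $M$ exists. Once $M$ is in hand, the remaining argument is purely formal.
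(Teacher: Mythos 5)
Your proposal is correct and follows essentially the same route as the paper: both use the preceding remark that $\pi^\Int$ is a barcoding invariant fixing intervals, feed that into Theorem~\ref{thm:uniqueness} (via $\pi^\Int \sim \dgm^\Int \sim \rk^\Int$ forcing $\pi^\Int = \dgm^\Int$), and invoke Corollary~\ref{corollary:comparingbarcodelike} to conclude incomparability once equality is ruled out. Your additional observation---that equality must be ruled out by exhibiting a non-interval indecomposable, which in turn requires $d \geq 2$ (for $P=\R^d$) or nonzero global interval dimension (for finite $P$)---is a genuine precision: the paper leaves this tacit, and the corollary as literally stated would fail for $P=\R^1$ or for the representation-finite posets of \cite[Theorem~5.1]{aoki2023summand} where every indecomposable is an interval and hence $\pi^\Int = \dgm^\Int$.
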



\section{Discussion}
\label{sec:discussion}

There exist numerous barcoding-equivalent invariants in the literature
(cf. Example~\ref{example:compressionbasedinvariants}).
We further emphasize 
that any choice of compression system $\xi$ yields a compression multiplicity $c^\xi$, which is an interval-barcoding-equivalent invariant (cf. Example~\ref{example:compressionbasedinvariants}~\ref{ex:cm_be}).
 

Theorem~\ref{theorem:comparing} implies that any pair of barcoding invariants are either identical or incomparable, indicating that comparing barcoding invariants with the partial order $\gtrsim$ (which compares their kernels) is too rigid.
Also, our results provide a new characterization of the interval Euler characteristic and that of the generalized persistence diagram  (cf.~Theorems~\ref{thm:hierarchy} and \ref{thm:uniqueness}).

The existence of the \emph{transfer} isomorphism between the kernels of barcoding invariants, established in Theorem~\ref{theorem:isokernel}, provides a direct link between these kernels
beyond that provided by Theorem~\ref{theorem:comparing}.
Namely, while Theorem~\ref{theorem:comparing} allows for a simple determination of whether two interval barcoding invariants $f$ and $g$ are incomparable---specifically by finding a single pair $(M,N)$ of representations distinguished by one invariant $f$ but not by another $g$,
Theorem~\ref{theorem:isokernel} goes further by enabling us to utilize that same pair $(M,N)$ to construct a pair of representations distinguished by $g$ but not by $f$.
The utility of combining Theorems~\ref{theorem:comparing} and~\ref{theorem:isokernel} is demonstrated in Theorem~\ref{thm:classifying_interval-barcoding_invariants}, Corollary~\ref{cor:commgrid}, and Examples~\ref{example:dgm_and_dimhom} and~\ref{ex:use_of_flipping_map}.
 
We emphasize that the bijection between $\ker f \setminus \ker g$ and $\ker g \setminus \ker f$ given in Theorem~\ref{theorem:isokernel} implies that no interval-barcoding invariant can be fundamentally better or worse than the others, and this bijectivity is not expected of arbitrary isomorphisms
(cf. Remarks~\ref{rem:special properties of T}, \ref{rem:depiction} and
\ref{rem:generic iso is too weak}).

Theorems~\ref{theorem:comparing} and \ref{theorem:isokernel} suggest a need to develop
data-dependent, probabilistic, and/or statistical approaches
to analyzing the discriminating power of barcoding invariants.
We also note that, with knowledge of the specific data being analyzed using poset representations,
certain (interval-)barcoding invariants may possess superior discriminating power,
potentially leading to conclusions that significantly differ from ours (cf.~Remark ~\ref{rem:depiction}).

While the comparison framework of this work does not encompass non-additive invariants,
recent progress (e.g., \cite{bauer2025multi,bjerkevik2025stabilizing})
suggests that respecting the direct sum structure may be too restrictive from the point of view of stability,
  and thus 
  {the development and comparison of } non-additive invariants would also be valuable.

\section*{Acknowledgments}
E.G.E.\ would like to thank
  Hideto Asashiba for discussions on the interval replacements, and
  Thomas Brüstle for discussions on the comparison of invariants.
  E.G.E.\ would like to thank Bjørnar Hem for suggesting 
  a weakening of the assumptions in Proposition~\ref{proposition:applyequivalent},
  which leads to the current statement.

  W.K. would like to thank Donghan Kim for insightful conversations,
  and Wojciech Chach\'olski for proposing   
  an alternative construction  of an isomorphism between kernels of barcoding invariants.
  
Both authors would also like to thank the attendees of the 
BIRS workshop ``Representation Theory and Topological Data Analysis (24w5241)''\footnote{Work on this project started prior to both authors attending the BIRS workshop.} for stimulating discussions on multi-parameter persistence and representation theory.
The authors also thank the anonymous reviewers for their thoughtful comments, which prompted the inclusion of Remark~\ref{rem:generic iso is too weak},
and prompted us to revise the presentation of main results in this paper, primarily in Section~\ref{subsec:transfer iso}.

E.G.E.\ is supported by JSPS Grant-in-Aid for Transformative Research Areas (A) (22H05105). WK is  supported by the National Research Foundation of Korea(NRF) grant funded by the Korea government(MSIT) (RS-2025-00515946).



\bibliographystyle{plain}
\bibliography{refs}

@article{chan2013topology,
  title={Topology of viral evolution},
  author={Chan, Joseph Minhow and Carlsson, Gunnar and Rabadan, Raul},
  journal={Proceedings of the National Academy of Sciences},
  volume={110},
  number={46},
  pages={18566--18571},
  year={2013},
  publisher={National Acad Sciences}
}

@article{cang2015topological,
  title={A topological approach for protein classification},
  author={Cang, Zixuan and Mu, Lin and Wu, Kedi and Opron, Kristopher and Xia, Kelin and Wei, Guo-Wei},
  journal={Computational and Mathematical Biophysics},
  volume={3},
  number={1},
  year={2015},
  publisher={De Gruyter Open}
}

@inproceedings{russoldgraphcode,
  title={Graphcode: Learning from multiparameter persistent homology using graph neural networks},
  author={Russold, Florian and Kerber, Michael},
  booktitle={The Thirty-eighth Annual Conference on Neural Information Processing Systems},
  year={2024}
}

@article{lee2018high,
  title={High-throughput screening approach for nanoporous materials genome using topological data analysis: application to zeolites},
  author={Lee, Yongjin and Barthel, Senja D and D{\l}otko, Pawe{\l} and Moosavi, Seyed Mohamad and Hess, Kathryn and Smit, Berend},
  journal={Journal of chemical theory and computation},
  volume={14},
  number={8},
  pages={4427--4437},
  year={2018},
  publisher={ACS Publications}
}

@article{kim2023persistence,
  title={Persistence over posets},
  author={Kim, Woojin and M{\'e}moli, Facundo},
  journal={Notices of the American Mathematical Society},
  volume={70},
  number={08},
  year={2023},
  publisher={AMS}
}

@article{kim2021bettis,
  title={Bigraded {B}etti numbers and Generalized Persistence Diagrams},
  author={Kim, Woojin and Moore, Samantha},
  journal={Journal of Applied and Computatioal Topology},
  year={2024},
  doi = {10.1007/s41468-024-00180-x},
}

@article{botnan2022introduction,
  title={An introduction to multiparameter persistence},
  author={Botnan, Magnus Bakke and Lesnick, Michael},
  journal={arXiv preprint arXiv:2203.14289},
  year={2022}
}

@article{bubenik2015metrics,
  title={Metrics for generalized persistence modules},
  author={Bubenik, Peter and De Silva, Vin and Scott, Jonathan},
  journal={Foundations of Computational Mathematics},
  volume={15},
  pages={1501--1531},
  year={2015},
  publisher={Springer}
}

@article{derksen2005quiver,
  title={Quiver representations},
  author={Derksen, Harm and Weyman, Jerzy},
  journal={Notices of the AMS},
  volume={52},
  number={2},
  pages={200--206},
  year={2005}
}

@article{mccleary2022edit,
  title={Edit Distance and Persistence Diagrams over Lattices},
  author={McCleary, Alexander and Patel, Amit},
  journal={SIAM Journal on Applied Algebra and Geometry},
  volume={6},
  number={2},
  pages={134--155},
  year={2022},
  publisher={SIAM}
}

@article{miller2020homological,
  title={Homological algebra of modules over posets},
  author={Miller, Ezra},
  journal={arXiv preprint arXiv:2008.00063},
  year={2020}
}

@article{lenzen2024computing,
  title={Computing Fringe Presentations of Multigraded Persistence Modules},
  author={Lenzen, Fabian},
  journal={arXiv preprint arXiv:2401.06008},
  year={2024}
}

@article{chacholski2024koszul,
  title={Koszul complexes and relative homological algebra of functors over posets},
  author={Chach{\'o}lski, Wojciech and Guidolin, Andrea and Ren, Isaac and Scolamiero, Martina and Tombari, Francesca},
  journal={Foundations of Computational Mathematics},
  pages={1--45},
  year={2024},
  publisher={Springer}
}

@article{dabaghian2012topological,
  title={A Topological Paradigm for Hippocampal Spatial Map Formation Using Persistent Homology},
  author={Dabaghian, Y and M{\'e}moli, F and Frank, L and Carlsson, G},
  journal={PLoS Computational Biology},
  volume={8},
  number={8},
  year={2012},
  publisher={Public Library of Science}
}

@article{stolz2023relational,
  title={Relational persistent homology for multispecies data with application to the tumor microenvironment},
  author={Stolz, Bernadette J and Dhesi, Jagdeep and Bull, Joshua A and Harrington, Heather A and Byrne, Helen M and Yoon, Iris HR},
  journal={arXiv preprint arXiv:2308.06205},
  year={2023}
}

@article{PhysRevC.106.064912,
  title = {Applications of persistent homology in nuclear collisions},
  author = {Hamilton, Greg and Dore, Travis and Plumberg, Christopher},
  journal = {Phys. Rev. C},
  volume = {106},
  issue = {6},
  pages = {064912},
  numpages = {17},
  year = {2022},
  month = {Dec},
  publisher = {American Physical Society},
  doi = {10.1103/PhysRevC.106.064912},
  url = {https://link.aps.org/doi/10.1103/PhysRevC.106.064912}
}

@article{clause2023meta,
  title={Meta-diagrams for 2-parameter persistence},
  author={Clause, Nate and Dey, Tamal K and M{\'e}moli, Facundo and Wang, Bei},
  journal={arXiv preprint arXiv:2303.08270},
  year={2023}
}

@article{fersztand2023harder,
  title={Harder-Narasimhan Filtrations of Persistence Modules},
  author={Fersztand, Marc and Jacquard, Emile and Nanda, Vidit and Tillmann, Ulrike},
  journal={arXiv preprint arXiv:2303.16075},
  year={2023}
}

@article{fersztand2024harder,
  title={Harder-Narasimhan filtrations of persistence modules: metric stability},
  author={Fersztand, Marc},
  journal={arXiv preprint arXiv:2406.05069},
  year={2024}
}

@article{hiraoka2016hierarchical,
  title={Hierarchical structures of amorphous solids characterized by persistent homology},
  author={Hiraoka, Yasuaki and Nakamura, Takenobu and Hirata, Akihiko and Escolar, Emerson G and Matsue, Kaname and Nishiura, Yasumasa},
  journal={Proceedings of the National Academy of Sciences},
  volume={113},
  number={26},
  pages={7035--7040},
  year={2016},
  publisher={National Acad Sciences}
}

@book{carlsson2021topological,
  title={Topological data analysis with applications},
  author={Carlsson, Gunnar and Vejdemo-Johansson, Mikael},
  year={2021},
  publisher={Cambridge University Press}
}

@misc{blanchette_exact_2023,
	title = {Exact structures for persistence modules},
	url = {http://arxiv.org/abs/2308.01790},
	doi = {10.48550/arXiv.2308.01790},
	abstract = {We discuss applications of exact structures and relative homological algebra to the study of invariants of multiparameter persistence modules. This paper is mostly expository, but does contain a pair of novel results. Over finite posets, classical arguments about the relative projective modules of an exact structure make use of Auslander-Reiten theory. One of our results establishes a new adjunction which allows us to "lift" these arguments to certain infinite posets over which Auslander-Reiten theory is not available. We give several examples of this lifting, in particular highlighting the non-existence and existence of resolutions by upsets when working with finitely presentable representations of the plane and of the closure of the positive quadrant, respectively. We then restrict our attention to finite posets. In this setting, we discuss the relationship between the global dimension of an exact structure and the representation dimension of the incidence algebra of the poset. We conclude with our second novel contribution. This is an explicit description of the irreducible morphisms between relative projective modules for several exact structures which have appeared previously in the literature.},
	publisher = {arXiv},
	author = {Blanchette, Benjamin and Brüstle, Thomas and Hanson, Eric J.},
	month = aug,
	year = {2023},
	note = {arXiv:2308.01790 [cs, math]},
	keywords = {Mathematics - Algebraic Topology, Computer Science - Computational Geometry, Mathematics - Representation Theory, 55N31, 16G20, 18G25 (primary), 16E10, 16E20, 16S50, 19A49 (secondary)},
	annote = {Comment: v2: corrected typos and minor erros, 25 pages},
}

@article{lesnick2015interactive,
  title={Interactive visualization of 2d persistence modules},
  author={Lesnick, Michael and Wright, Matthew},
  journal={arXiv preprint arXiv:1512.00180},
  year={2015}
}

@article{oudot2024stability,
  title={On the stability of multigraded Betti numbers and Hilbert functions},
  author={Oudot, Steve and Scoccola, Luis},
  journal={SIAM Journal on Applied Algebra and Geometry},
  volume={8},
  number={1},
  pages={54--88},
  year={2024},
  publisher={SIAM}
}

@article{asashiba2023approximation,
  title={Approximation by interval-decomposables and interval resolutions of persistence modules},
  author={Asashiba, Hideto and Escolar, Emerson G and Nakashima, Ken and Yoshiwaki, Michio},
  journal={Journal of Pure and Applied Algebra},
  volume={227},
  number={10},
  pages={107397},
  year={2023},
  publisher={Elsevier}
}

@article{aoki2023summand,
  title={Summand-injectivity of interval covers and monotonicity of interval resolution global dimensions},
  author={Aoki, Toshitaka and Escolar, Emerson G and Tada, Shunsuke},
  journal={Journal of Applied and Computational Topology},
  volume={9},
  number={2},
  pages={13},
  year={2025},
  publisher={Springer}
}

@article{kim2021spatiotemporal,
  title={Spatiotemporal persistent homology for dynamic metric spaces},
  author={Kim, Woojin and M{\'e}moli, Facundo},
  journal={Discrete \& Computational Geometry},
  volume={66},
  number={3},
  pages={831--875},
  year={2021},
  publisher={Springer}
}

@article{gulen2023orthogonal,
  title={Orthogonal {M}\"obius Inversion and Grassmannian Persistence Diagrams},
  author={G{\"u}len, Aziz Burak and M{\'e}moli, Facundo and Wan, Zhengchao},
  journal={arXiv preprint arXiv:2311.06870},
  year={2023}
}

@article{gulen2022galois,
  title={Galois connections in persistent homology},
  author={Gulen, Aziz Burak and McCleary, Alexander},
  journal={arXiv preprint arXiv:2201.06650},
  year={2022}
}

@article{bauer2020cotorsion,
  title={Cotorsion torsion triples and the representation theory of filtered hierarchical clustering},
  author={Bauer, Ulrich and Botnan, Magnus B and Oppermann, Steffen and Steen, Johan},
  journal={Advances in Mathematics},
  volume={369},
  pages={107171},
  year={2020},
  publisher={Elsevier}
}

@article{cai2021elder,
  title={Elder-rule-staircodes for augmented metric spaces},
  author={Cai, Chen and Kim, Woojin and M{\'e}moli, Facundo and Wang, Yusu},
  journal={SIAM Journal on Applied Algebra and Geometry},
  volume={5},
  number={3},
  pages={417--454},
  year={2021},
  publisher={SIAM}
}

@article{demir2023se,
  title={SE (3)-Invariant Multiparameter Persistent Homology for Chiral-Sensitive Molecular Property Prediction},
  author={Demir, Andac and Prael III, Francis and Kiziltan, Bulent},
  journal={arXiv preprint arXiv:2312.07633},
  year={2023}
}

@book{mac2013categories,
  title={Categories for the working mathematician},
  author={Mac Lane, Saunders},
  volume={5},
  year={2013},
  publisher={Springer Science \& Business Media}
}

@article{amiot2024invariants,
  title={Invariants of persistence modules defined by order-embeddings},
  author={Amiot, Claire and Br{\"u}stle, Thomas and Hanson, Eric J},
  journal={arXiv preprint arXiv:2402.09190},
  year={2024}
}

@article{zomorodian2005computing,
  title={Computing persistent homology},
  author={Zomorodian, Afra and Carlsson, Gunnar},
  journal={Discrete \& Computational Geometry},
  volume={33},
  number={2},
  pages={249--274},
  year={2005},
  publisher={Springer}
}

@article{cohen2007stability,
  title={Stability of persistence diagrams},
  author={Cohen-Steiner, David and Edelsbrunner, Herbert and Harer, John},
  journal={Discrete \& computational geometry},
  volume={37},
  number={1},
  pages={103--120},
  year={2007},
  publisher={Springer}
}

@article{abeasis1981geometry,
  title={The geometry of representations of ${A}_m$},
  author={Abeasis, Silvana and Del Fra, Alberto and Kraft, Hanspeter},
  journal={Mathematische Annalen},
  volume={256},
  number={3},
  pages={401--418},
  year={1981},
  publisher={Springer}
}

@article{lu2013algebraic,
  title={Algebraic Structures on Grothendieck Groups},
  author={Lu, Weiyun and McBride, Aaron K},
  journal={Department of Mathematics \& Statistics, University of Ottawa},
  year={2013}
}

@article{krause2015krull,
  title={Krull--Schmidt categories and projective covers},
  author={Krause, Henning},
  journal={Expositiones Mathematicae},
  volume={33},
  number={4},
  pages={535--549},
  year={2015},
  publisher={Elsevier}
}

@article{patel2018generalized,
  title={Generalized persistence diagrams},
  author={Patel, Amit},
  journal={Journal of Applied and Computational Topology},
  volume={1},
  number={3},
  pages={397--419},
  year={2018},
  publisher={Springer}
}

@book{gabriel1992representations,
  title={Representations of finite-dimensional algebras},
  author={Gabriel, Peter and Roiter, Andrei V},
  volume={73},
  year={1992},
  publisher={{Springer Science \& Business Media}}
}

@article{kinser2008rank,
  title={The rank of a quiver representation},
  author={Kinser, Ryan},
  journal={Journal of Algebra},
  volume={320},
  number={6},
  pages={2363--2387},
  year={2008},
  publisher={Elsevier}
}

@article{crawley2015decomposition,
  title={Decomposition of pointwise finite-dimensional persistence modules},
  author={Crawley-Boevey, William},
  journal={Journal of Algebra and its Applications},
  volume={14},
  number={05},
  pages={1550066},
  year={2015},
  publisher={World Scientific}
}

@article{botnan2021signed,
  title={Signed barcodes for multi-parameter persistence via rank decompositions and rank-exact resolutions},
  author={Botnan, Magnus Bakke and Oppermann, Steffen and Oudot, Steve},
  journal={Foundations of Computational Mathematics},
  pages={1--60},
  year={2024},
  publisher={Springer}
}

@article{botnan2018algebraic,
  title={Algebraic stability of zigzag persistence modules},
  author={Botnan, Magnus and Lesnick, Michael},
  journal={Algebraic \& {G}eometric topology},
  volume={18},
  number={6},
  pages={3133--3204},
  year={2018},
  publisher={Mathematical Sciences Publishers}
}

@article{asashiba2019approximation,
  title={On approximation of 2{D} persistence modules by interval-decomposables},
  author={Asashiba, Hideto and Escolar, Emerson G and Nakashima, Ken and Yoshiwaki, Michio},
  journal={Journal of Computational Algebra},
  volume={6},
  pages={100007},
  year={2023},
  publisher={Elsevier}
}

@incollection{chambers2018persistent,
  title={Persistent homology over directed acyclic graphs},
  author={Chambers, Erin and Letscher, David},
  booktitle={Research in Computational Topology},
  pages={11--32},
  year={2018},
  publisher={Springer}
}

@article{kim2021generalized,
  title={Generalized persistence diagrams for persistence modules over posets},
  author={Kim, Woojin and M{\'e}moli, Facundo},
  journal={Journal of Applied and Computational Topology},
  volume={5},
  number={4},
  pages={533--581},
  year={2021},
  publisher={Springer}
}

@article{clause2022discriminating,
  title={The {G}eneralized {R}ank Invariant: {M}\"obius invertibility, {D}iscriminating Power, and {C}onnection to {O}ther {I}nvariants},
  author={Clause, Nate and Kim, Woojin and Memoli, Facundo},
  journal={arXiv preprint arXiv:2207.11591v5},
  year={2024}
}

@book{miller2005combinatorial,
  title={Combinatorial commutative algebra},
  author={Miller, Ezra and Sturmfels, Bernd},
  volume={227},
  year={2005},
  publisher={Springer Science \& Business Media}
}

@article{carlsson2009theory,
  title={The theory of multidimensional persistence},
  author={Carlsson, Gunnar and Zomorodian, Afra},
  journal={Discrete \& Computational Geometry},
  volume={42},
  number={1},
  pages={71--93},
  year={2009},
  publisher={Springer}
}

@article{rota1964foundations,
  title={On the foundations of combinatorial theory {I}: {T}heory of {M}{\"o}bius functions},
  author={Rota, Gian-Carlo},
  journal={Zeitschrift f{\"u}r Wahrscheinlichkeitstheorie und verwandte Gebiete},
  volume={2},
  number={4},
  pages={340--368},
  year={1964},
  publisher={Springer-Verlag}
}

@article{stanley2011enumerative,
  title={Enumerative Combinatorics Volume 1 second edition},
  author={Stanley, Richard P},
  journal={Cambridge studies in advanced mathematics},
  year={2011}
}

@article{blanchette2024homological,
  title={Homological approximations in persistence theory},
  author={Blanchette, Benjamin and Br{\"u}stle, Thomas and Hanson, Eric J},
  journal={Canadian Journal of Mathematics},
  volume={76},
  number={1},
  pages={66--103},
  year={2024},
  publisher={Canadian Mathematical Society}
}

@article{asashiba2024interval,
  title={Interval Replacements of Persistence Modules},
  author={Asashiba, Hideto and Gauthier, Etienne and Liu, Enhao},
  journal={arXiv preprint arXiv:2403.08308},
  year={2024}
}

@article{dey2024computing,
  title={Computing generalized rank invariant for 2-parameter persistence modules via zigzag persistence and its applications},
  author={Dey, Tamal K and Kim, Woojin and M{\'e}moli, Facundo},
  journal={Discrete \& Computational Geometry},
  volume={71},
  number={1},
  pages={67--94},
  year={2024},
  publisher={Springer}
}

@article{geist2023global,
  title={Global dimension of real-exponent polynomial rings},
  author={Geist, Nathan and Miller, Ezra},
  journal={Algebra \& Number Theory},
  volume={17},
  number={10},
  pages={1779--1788},
  year={2023},
  publisher={Mathematical Sciences Publishers}
}

@article{botnan2020decomposition,
  title={Decomposition of persistence modules},
  author={Botnan, Magnus and Crawley-Boevey, William},
  journal={Proceedings of the American Mathematical Society},
  volume={148},
  number={11},
  pages={4581--4596},
  year={2020}
}

@article{hiraoka2023refinement,
  title={Refinement of interval approximations for fully commutative quivers},
  author={Hiraoka, Yasuaki and Nakashima, Ken and Obayashi, Ippei and Xu, Chenguang},
  journal={arXiv preprint arXiv:2310.03649},
  year={2023}
}

@phdthesis{thomas2019invariants,
  title={Invariants and metrics for multiparameter persistent homology},
  author={Thomas, Ashleigh Linnea},
  year={2019},
  school={Duke University}
}

@article{buhler2010exact,
  title={Exact categories},
  author={B{\"u}hler, Theo},
  journal={Expositiones Mathematicae},
  volume={28},
  number={1},
  pages={1--69},
  year={2010},
  publisher={Elsevier}
}

@book{borceux1994handbook1,
  title={Handbook of categorical algebra: volume 1, Basic category theory},
  author={Borceux, Francis},
  volume={1},
  year={1994},
  publisher={Cambridge University Press}
}

@book{borceux1994handbook2,
  title={Handbook of Categorical Algebra: Volume 2, Categories and Structures},
  author={Borceux, Francis},
  volume={2},
  year={1994},
  publisher={Cambridge University Press}
}

@book{freyd1964abelian,
  title={Abelian categories},
  author={Freyd, Peter J},
  volume={1964},
  year={1964},
  publisher={Harper \& Row New York}
}

@article{gabriel1972unzerlegbare,
  title={Unzerlegbare darstellungen I},
  author={Gabriel, Peter},
  journal={Manuscripta mathematica},
  volume={6},
  number={1},
  pages={71--103},
  year={1972},
  publisher={Springer}
}

@book{assem2006elements,
  title={Elements of the representation theory of associative algebras: Volume 1: Techniques of representation theory},
  author={Assem, Ibrahim and Simson, Daniel and Skowronski, Andrzej},
  year={2006},
  publisher={Cambridge University Press}
}

@article{bauer2025multi,
  title={Multi-parameter persistence modules are generically indecomposable},
  author={Bauer, Ulrich and Scoccola, Luis},
  journal={International Mathematics Research Notices},
  volume={2025},
  number={5},
  pages={rnaf034},
  year={2025},
  publisher={Oxford University Press}
}

@article{bjerkevik2025stabilizing,
  title={Stabilizing decomposition of multiparameter persistence modules},
  author={Bjerkevik, H{\aa}vard Bakke},
  journal={Foundations of Computational Mathematics},
  pages={1--60},
  year={2025},
  publisher={Springer}
}

\end{document}